\newcommand\numberthis{\addtocounter{equation}{1}\tag{\theequation}}
\newcounter{n}
\numberwithin{n}{section}
\theoremstyle{plain}
\newtheorem{lemma}[n]{Lemma}
\newtheorem{theorem}[n]{Theorem}
\newtheorem*{theorem*}{Theorem}
\newtheorem*{corollary*}{Corollary}
\newtheorem{proposition}[n]{Proposition}
\newtheorem{corollary}[n]{Corollary}
\theoremstyle{definition}
\newtheorem*{remark*}  {Remark}
\DeclareMathOperator*{\esssup}{ess\,sup}
\DeclareMathOperator*{\essinf}{ess\,inf}
\renewcommand{\setminus}{\smallsetminus}
\definecolor{colorlinks}{RGB}{0, 24, 168}
\definecolor{colorcites}{RGB}{124, 10, 2}
\title[Variational principle for weakly dependent random fields]
{Variational principle for\\weakly dependent random fields}
\author{Piet Lammers}
\address{Statistical Laboratory, Centre for Mathematical Sciences, University of Cambridge}
\email{p.g.lammers@statslab.cam.ac.uk}
\author{Martin Tassy}
\address{Department of Mathematics, Dartmouth College}
\email{mtassy@math.dartmouth.edu}
\begin{document}

\makeatletter
\@namedef{subjclassname@2020}{\textup{2020} Mathematics Subject Classification}
\makeatother

\subjclass[2020]{Primary 82B20, 82B44}
\keywords{Gibbs measures, entropy, free energy, random fields, variational principle}
%!TEX root=../main.tex
\begin{abstract}
Using an alternative notion of entropy introduced by Datta, the max-entropy, we present a new simplified framework to study the minimizers of the specific free energy for random fields which are weakly dependent in the sense of Lewis, Pfister, and Sullivan. The framework is then applied to derive the variational principle for the loop $O(n)$ model and the Ising model in a random percolation environment in the nonmagnetic phase, and we explain how to extend the variational principle to similar models. To demonstrate the generality of the framework, we indicate how to naturally fit into it the variational principle for models with an absolutely summable interaction potential, and for the random-cluster model.
\end{abstract}

\maketitle

\setcounter{tocdepth}{1}
\tableofcontents

%!TEX root=../main.tex

\section{Introduction}
% \label{sec:intro}

\subsection{Random fields with long-range interactions}
One of the great results in statistical physics is the variational principle,
which asserts that a shift-invariant infinite-volume measure is a Gibbs measure
if and only if it minimizes the specific free energy.
The class of models which fall under the scope of the variational principle is extremely broad.
Models for which the interaction potential is absolutely summable were covered by Georgii~\cite{GEORGII}.
There have been numerous attempts to extend or generalize the variational principle beyond,
often in relation to a study of the points where \emph{continuity}
or \emph{quasilocality} of the specification
fails.
Such points are characterized by non-vanishing long-range interactions,
and appear naturally
in, for example, the random-cluster model~\cite{GRIMMETT,SEPPALAINEN},
the loop $O(n)$ model~\cite{PELED},
and several models in a random environment such
as the Ising model in a percolation environment~\cite{VanENTER}.
A non-exhaustive list of the study of the variational principle
for specifications which are not quasilocal includes~\cite{VanEnterFernandezSokal,PFISTER,SEPPALAINEN,MAES,VanENTER,VanEnterVerbitskiy,KULSKE}.
Further investigation into the variational principle
was carried out in relation to
renormalization \cite{MR1706769,MR1964281},
the large deviations principle \cite{MR1227034,MR1240718,MR1344727},
and projections or restrictions of Gibbs measures
\cite{MR1706773,MR2791060}.
Other works on the variational principle in the infinite-volume setting include \cite{Stroock,ZEGARLINSKI,FERNANDEZ}.
The variational principle is known to fail for some models,
most notably the random field Ising model~\cite{KULSKE}, which was known to exhibit phase transition~\cite{bricmont1988}.
Despite those efforts there are still some interesting models for which it is not known if the variational principle holds true or not.
Among those are various models of random fields in random environments:
a noteworthy example is the Ising model on a random subgraph of the square lattice obtained from independent percolation.
The inherent problem derives from the fact that the strength of the interactions between particles does not decay uniformly with the range.

This model belongs to a large, natural class of models known as
\emph{weakly dependent}: this term is due to Lewis, Pfister and Sullivan~\cite{LPS}.
We develop a streamlined framework for studying the minimizers of the specific free energy
within this class.
The framework allows one to efficiently deduce the variational principle for many
interesting
weakly dependent models.
Our discussion reviews the absolutely summable setting of~\cite{GEORGII},
and
the random-cluster model~\cite{SEPPALAINEN} (see~\cite{GRIMMETT} for a general introduction).
We break new ground by
 proving the variational principle
for the Ising model in a random environment, in the nonmagnetic phase.
This significantly extends the results of~\cite{KULSKE}.
We furthermore deduce the variational principle for the loop $O(n)$ model
(see~\cite{PELED} for a general introduction)
by extension of the discussion of the random-cluster model,
and we explain how these models represent any model
where the nonvanishing long-range interaction is due
to a potential associated with clusters, level sets, paths, or other large geometrical objects
that arise from the local structure.

\subsection{The specific free energy}
The specific free energy and
a suitable characterization for it
are of central importance
to the study of the variational principle.
% It is not clear if the specific free energy
% is well-defined for an arbitrary model,
% and this is the first obstacle that one needs to overcome.
A natural first question is thus to ask
about restrictions on the
model that guarantee that the specific free energy is well-behaved.
Candidates are the previously mentioned \emph{weakly dependent} \cite{LPS},
and the more general \emph{asymptotically decoupled}.
The latter was introduced by Pfister~\cite{Pfister2002}.
While either restriction guarantees a well-defined specific free energy,
the former is more amenable to arguments
involving regular conditional probability distributions,
and is therefore better for studying the variational principle.
Remark that we shall define
the specific free energy in terms of the specification
that characterizes the model,
unlike in \cite{LPS,Pfister2002} where
it is defined in terms of a reference random field.
Our definition of \emph{weakly dependent} is therefore cosmetically different.

There is a simple and natural definition
of
a weakly dependent specification once we introduce the max-entropy of two measures.
The max-entropy of some measure $\mu$ relative to another measure $\nu$ equals
\[
\mathcal H^\infty(\mu|\nu):=\log\inf\{\lambda\geq 0:\mu\leq\lambda\nu\},
\]
and was introduced by Datta in~\cite{DATTA}.
We call a specification \emph{weakly dependent} if the max-entropy between any two finite-volume Gibbs measures
on a box $\Lambda\subset\mathbb Z^d$ is of order $o(|\Lambda|)$ as $\Lambda$ grows large.
% Such models are called \emph{weakly dependent} \cite{LPS}.
% The notion of weak dependence was later generalized by Pfister~\cite{Pfister2002};
% models with the generalized property are called \emph{asymptotically decoupled}.
% The weakly dependent setting however is more amenable to arguments
% involving regular conditional probability distributions,
% and is therefore better for studying the variational principle.

The class of weakly dependent models is rich, and it is not hard to prove that the various models that were mentioned are all weakly dependent.
If the model of interest is weakly dependent,
then the specific free energy has all the usual properties:
its level sets (which are sets of shift-invariant random fields) are compact in the topology of local convergence,
and there exist shift-invariant random fields that have zero specific free energy.

\subsection{Main results}
Consider a weakly dependent specification.
We call a random field
a \emph{minimizer} if it is shift-invariant
and has zero specific free energy with respect
to this specification.
It is a corollary of the definition
of the specific free energy that shift-invariant Dobrushin-Lanford-Ruelle (DLR) states are minimizers.
We show that
a shift-invariant random field is a minimizer
if and only if it is a limit of finite-volume Gibbs measures, where we allow mixed boundary conditions.
If $\mu$ is a minimizer,
then we derive properties of the conditional probability distribution of $\mu$ in a box $\Lambda$,
conditioned on what happens outside of $\Lambda$.
If $\mu$ is supported on the points of continuity of
the specification corresponding to the model,
then we show that $\mu$ is a DLR state, and almost Gibbs.
In general,
we demonstrate that all minimizers have finite energy in the sense of Burton and Keane, so that we are able to make their
case for almost sure uniqueness of the infinite cluster (if this is relevant for the model under consideration).

The variational principle asserts that the
minimizers of the specific free energy coincide
with the shift-invariant almost Gibbs measures.
The framework provides a clear route to demonstrating its validity for weakly dependent models:
it is sufficient to prove
that minimizers of the specific free energy are supported on the points of continuity of the specification,
and in deriving this one may assume all the properties that
minimizers of the specific free energy automatically have.

We apply the framework to all models that were previously mentioned.
First, we show how to fit into our framework the known variational principles
for models with an absolutely summable interaction potential \cite{GEORGII},
and for the random-cluster model \cite{SEPPALAINEN}.
Then, we derive the variational principle for the
loop $O(n)$ model, and by extension we assert that
the variational principle must hold true for a large
class of models where the long-range interaction is due to weight
on percolation clusters (such as for the random-cluster model),
level sets, loops, or other large geometrical objects which
arise from the local structure.
Next, we derive the variational principle for the
Ising model in a random percolation environment
in the nonmagnetic phase.
This improves upon the work of \cite{KULSKE},
where the same result is established for the phase where the random environment does not percolate.
The authors believe that for a large class of models in
a random environment,
the proposed framework significantly reduces the complexity
of determining wether or not the variational principle holds true.

Finally, it should be remarked that in all our work we shall never require the state space to be finite;
the framework works for any standard Borel space,
much like the setting of Georgii~\cite{GEORGII}.

\subsection{Structure} The article is organized as follows. In Section~\ref{s_definitions} we introduce the various mathematical objects necessary to define and study the specific free energy. In Section~\ref{s_main} we give a presentation of our main results. In Section~\ref{s_sfe} we show how to define the specific free energy for weakly dependent specifications, and we prove some of its properties. In Section~\ref{s_minimizers} we give a characterization of the minimizers of the specific free energy. In Section~\ref{s_applications} we show how to derive easily from our framework various versions of the variational principle.

% \subsection{Open problems}

%!TEX root=../main.tex

\section{Definitions}\label{s_definitions}

If $(X,\mathcal X)$ is any measurable space,
then write $\mathcal P(X,\mathcal X)$ for the set of probability measures on $(X,\mathcal X)$, and $\mathcal M(X,\mathcal X)$ for the set of
$\sigma$-finite
measures $\mu$ with $\mu(X)>0$.
In this paper we only consider measurable spaces that are standard
Borel spaces.
We shall follows the notation of Georgii~\cite{GEORGII} wherever possible.

\subsection{Random fields}

We are concerned with the study of random fields.
Fix a dimension $d\in\mathbb N$ and a standard Borel space $(E,\mathcal E)$ throughout this article.
The set $S:=\mathbb Z^d$ is called the \emph{parameter set},
and $(E,\mathcal E)$ is called the \emph{state space}.
Elements of $S$ are called \emph{sites}.
A \emph{configuration} is a function $\omega$ that assigns to each
site $x\in S$ a state $\omega_x\in E$.
Write $\Omega:=E^S$ for the set of configurations,
and $\mathcal F$ for the product $\sigma$-algebra
$\mathcal E^S$ on $\Omega$.
A \emph{random field} is a probability measure on configurations:
the set of random fields is $\mathcal P(\Omega,\mathcal F)$.

Define, for each site $x\in S$, the measurable function $\sigma_x:\Omega\to E,\,\omega\mapsto \omega_x$.
For any $\Lambda\subset S$, we shall write
$\mathcal F_\Lambda:=\sigma(\sigma_x:x\in\Lambda)\subset \mathcal F$.
Write furthermore $\sigma_\Lambda$ for the canonical projection map
$\Omega=E^S\to E^\Lambda$,
and observe that $\sigma_\Lambda$ extends canonically to a bijection
from $\mathcal F_\Lambda$ to $\mathcal E^\Lambda$
and to a bijection from $\mathcal P(\Omega,\mathcal F_\Lambda)$
to $\mathcal P(E^\Lambda,\mathcal E^\Lambda)$.
Define $\omega_\Lambda:=\sigma_\Lambda(\omega)$ for $\omega\in\Omega$,
and
if $\mu\in\mathcal P(\Omega,\mathcal X)$
for some $\mathcal F_\Lambda\subset\mathcal X\subset \mathcal F$,
then write $\mu_\Lambda:=\sigma_\Lambda(\mu)\in\mathcal P(E^\Lambda,\mathcal E^\Lambda)$.
If $f$ is an $\mathcal F_\Lambda$-measurable function on $\Omega$
and $g$ an $\mathcal E^\Lambda$-measurable function on $E^\Lambda$,
then we shall without further notice write $f$ for the $\mathcal E^\Lambda$-measurable function $f\circ \sigma_\Lambda^{-1}$ on $E^\Lambda$
and $g$ for the $\mathcal F_\Lambda$-measurable function $g\circ\sigma_\Lambda$ on $\Omega$.
Finally, if $\Lambda\subset\Delta\subset S$, then write
also $\sigma_\Lambda$ for the canonical projection map
$E^\Delta\to E^\Lambda$, and if $\omega\in E^\Lambda$
and $\zeta\in E^{\Delta\setminus\Lambda}$,
then write $\omega\zeta$ for the unique element of $E^\Delta$
such that $\sigma_\Lambda(\omega\zeta)=\omega$
and $\sigma_{\Delta\setminus\Lambda}(\omega\zeta)=\zeta$.

Define, for every $x\in\mathbb Z^d$, the map $\theta_x:\mathbb Z^d\to\mathbb Z^d,\,y\mapsto y+x$.
Each map $\theta_x$ is called a \emph{shift}.
Write $\Theta$ for the set of shifts, that is, $\Theta=\{\theta_x:x\in\mathbb Z^d\}$.
If $\omega\in\Omega$ and $\theta\in\Theta$, then write $\theta\omega$
for the configuration in $\Omega$
satisfying $(\theta\omega)_x=\omega_{\theta x}$ for every $x\in S$.
Similarly, define $\theta A:=\{\theta\omega:\omega\in A\}$
for $A\in\mathcal F$.
% A potential $\Phi$ is called \emph{shift-invariant} if
% $\Phi_{\theta\Lambda}(\omega)=\Phi_{\Lambda}(\theta\omega)$
% for any $\Lambda\in\mathcal S$, $\theta\in\Theta$,
% and $\omega\in\Omega$.
A random field $\mu\in\mathcal P(\Omega,\mathcal F)$
is called \emph{shift-invariant} if $\mu(\theta A)=\mu(A)$ for any $A\in\mathcal F$ and $\theta\in\Theta$.
Write $\mathcal P_\Theta(\Omega,\mathcal F)$
for the collection of shift-invariant random fields.

\subsection{Entropy and max-entropy}
Consider two $\sigma$-finite measures $\mu,\nu\in\mathcal M(X,\mathcal X)$
on a standard Borel space $(X,\mathcal X)$.
The \emph{entropy} of $\mu$ relative to $\nu$
is defined by
\[
  \mathcal H(\mu|\nu):=\begin{cases}
    \mu(\log f)=\nu(f\log f)&\text{if $\mu\ll\nu$ where $f:=d\mu/d\nu$,}\\
    \infty&\text{otherwise.}
\end{cases}
\]
The \emph{max-entropy} of $\mu$ relative to $\nu$
is defined by
\[
  \mathcal H^\infty(\mu|\nu):=
  \log\inf\{\lambda\geq 0:\mu\leq\lambda\nu\}
  =
  \begin{cases}
    \esssup\log f&\text{if $\mu\ll\nu$ where $f:=d\mu/d\nu$,}\\
    \infty&\text{otherwise.}
  \end{cases}
\]
% The \emph{symmetrised max-entropy}
% of $\mu$ relative to $\nu$ is defined by
% \[
%   \mathcal H^\pm(\mu|\nu):=\mathcal H^\infty(\mu|\nu)+\mathcal H^\infty(\nu|\mu).
% \]
Note that
%all three
both
entropies are nonnegative when $\mu$ and $\nu$
are probability measures --- if they are indeed probability measures,
then each entropy equals zero if and only if $\mu=\nu$.
If $\mathcal Y$ is a sub-$\sigma$-algebra
of $\mathcal X$,
then define $\mathcal H_\mathcal Y(\mu|\nu):=\mathcal H(\mu|_\mathcal Y|\nu|_\mathcal Y)$.
If $(X,\mathcal X)=(\Omega,\mathcal F)$
and $\Lambda\in\mathcal S$,
then abbreviate
$\mathcal H_{\mathcal F_\Lambda}(\mu|\nu)$
to $\mathcal H_\Lambda(\mu|\nu)$.
Introduce a similar definition for
$\mathcal H_\Lambda^\infty(\mu|\nu)$.
Finally, define the \emph{max-diameter} of
a nonempty set $\mathcal B\subset \mathcal M(X,\mathcal X)$
by
\[
  \operatorname{Diam}^\infty\mathcal B:=
  \sup_{\mu,\nu\in\mathcal B}
  \mathcal H^\infty(\mu|\nu)\geq 0,
\]
where we observe equality if and only if $\mathcal B$ contains exactly one measure.

For probability measures $\mu,\nu\in\mathcal P(X,\mathcal X)$,
we always have $\mathcal H(\mu|\nu)\leq\mathcal H^\infty(\mu|\nu)$.
It is possible however that $\mathcal H^\infty(\mu|\nu)$
is large and $\mathcal H(\mu|\nu)$ small,
for example if the Radon-Nikodym derivative $f:=d\mu/d\nu$
is large on a very small portion of $(X,\mathcal X)$.
The max-entropy should be understood as
a sort of $L^\infty$-version of the usual entropy.
The max-entropy and the max-diameter prove to be efficient
tools for selecting the class of models
for which the theory in this article works.
The usual entropy however, is sometimes easier
to work with due to a number of standard identities that are available; see for example (\ref{lemma_superadditivity_rcpd_decomposition}) in the proof of Lemma~\ref{lemma_superadditivity}.

\subsection{Weakly dependent specifications}
\label{subsec_weakly}
A \emph{specification} is a family $\gamma=(\gamma_\Lambda)_{\Lambda\in\mathcal S}$
with the following properties:
\begin{enumerate}
  \item Each member $\gamma_\Lambda$ is a probability kernel from $(\Omega,\mathcal F_{S\setminus\Lambda})$
  to $(\Omega,\mathcal F)$,
  \item Each member $\gamma_\Lambda$ satisfies $\gamma_\Lambda(A,\omega)=1(\omega\in A)$
  whenever $A\in \mathcal F_{S\setminus\Lambda}$,
  \item If $\Lambda\subset\Delta\in\mathcal S$,
  then $\gamma_\Delta=\gamma_\Delta\gamma_\Lambda$.
\end{enumerate}
A member $\gamma_\Lambda$ is called \emph{proper} if it has
the second property;
the family $\gamma$ is called \emph{consistent} if
it has the third property.
We fix a specification $\gamma$ throughout this article.
The specification $\gamma$ is called \emph{shift-invariant} if
$\gamma_{\theta\Lambda}(A,\omega)=\gamma_{\Lambda}(\theta A,\theta\omega)$
for any $\Lambda\in\mathcal S$, $A\in\mathcal F$, $\omega\in\Omega$,
 $\theta\in\Theta$.

Fix $\Lambda\in\mathcal S$,
and consider $\gamma_\Lambda$:
this is a probability kernel from $(\Omega,\mathcal F_{S\setminus\Lambda})$
to $(\Omega,\mathcal F)$.
Write $\hat\gamma_\Lambda$
for the unique probability kernel from $(\Omega,\mathcal F_{S\setminus\Lambda})$
to $(E^\Lambda,\mathcal E^\Lambda)$
such that $\hat\gamma_\Lambda(\cdot,\omega)=\sigma_\Lambda(\gamma_\Lambda(\cdot,\omega))$
for every $\omega\in\Omega$.
The measure $\hat\gamma_\Lambda(\cdot,\omega)$
is the \emph{finite-volume Gibbs measure} on $(E^\Lambda,\mathcal E^\Lambda)$
with \emph{deterministic boundary conditions} $\omega$.
Of course, the original kernel $\gamma_\Lambda$
can be recovered from $\hat\gamma_\Lambda$ through the equation
$\gamma_\Lambda(\cdot,\omega)=\hat\gamma_\Lambda(\cdot,\omega)\times\delta_{\omega_{S\setminus\Lambda}}$ --- this is because $\gamma_\Lambda$ is proper.
It is often more convenient to define $\hat\gamma_\Lambda$
than $\gamma_\Lambda$ when describing a specific model.

Now fix a random field $\mu\in\mathcal P(\Omega,\mathcal F)$,
and consider the finite-volume measure
$\mu\hat\gamma_\Lambda$.
This is the \emph{finite-volume Gibbs measure} on $(E^\Lambda,\mathcal E^\Lambda)$
with \emph{mixed boundary conditions} $\mu$.
Define
\[
  \mathcal B_\Lambda(\gamma):=\left\{
    \mu\hat\gamma_\Lambda:\mu\in\mathcal P(\Omega,\mathcal F)
  \right\}\subset\mathcal P(E^\Lambda,\mathcal E^\Lambda):
\]
the set of all such finite-volume Gibbs measures.
This set is convex because the set of all random fields is convex.
For each $n\in\mathbb N$, we use the notation $\Delta_n$
for the box
\[
  \Delta_n:=\{-n,\dots,n\}^d\in\mathcal S.
\]
The specification $\gamma$ is called \emph{weakly dependent} if
$\gamma$ is shift-invariant and satisfies
\[
  \operatorname{Diam}^\infty\mathcal B_{\Delta_n}(\gamma)=o(|\Delta_n|)
\]
as $n\to\infty$.
For technical reasons we also require that $\operatorname{Diam}^\infty\mathcal B_\Lambda(\gamma)$
is finite for any $\Lambda\in\mathcal S$; this additional condition is not restrictive.
Write
$\mathbb S$
for the collection of weakly dependent specifications.

Before proceeding, it is useful to remark that
\[
\operatorname{Diam}^\infty\mathcal B_{\Lambda}(\gamma)
:=
\sup_{\mu,\nu}\mathcal H^\infty(\mu\hat\gamma_\Lambda|\nu\hat\gamma_\Lambda)
=
\sup_{\omega,\zeta}\mathcal H^\infty(\hat\gamma_\Lambda(\cdot,\omega)|\hat\gamma_\Lambda(\cdot,\zeta));
\]
it is sufficient to consider deterministic boundary conditions in
calculating the max-diameter of $\mathcal B_\Lambda(\gamma)$.
This can be deduced from Fubini's theorem without effort.

\subsection{The specific free energy}
Consider a shift-invariant random field
$\mu$
and a weakly dependent specification $\gamma$.
The \emph{specific free energy (SFE)}
of $\mu$ relative
to $\gamma$ is defined by
\[
  h(\mu|\gamma):=\lim_{n\to\infty}|\Delta_n|^{-1}\mathcal H_{\Delta_n}(\mu|\nu\gamma_{\Delta_n})\in[0,\infty]
  \]
  where $\nu\in\mathcal P(\Omega,\mathcal F)$.
  Lemma~\ref{lemma_sfe} asserts
  that the limit exists for any $\nu$,
  and that this limit is independent of the choice of $\nu$.
  A shift-invariant random field $\mu$ with $h(\mu|\gamma)=0$ is called
  a \emph{minimizer} of $\gamma$.
  Write $h_0(\gamma)$ for the set of minimizers of $\gamma$.

Now take the perspective of a shift-invariant random field $\mu$.
The random field $\mu$ is called \emph{weakly dependent}
if $\mu\in h_0(\gamma)$
for some weakly dependent specification $\gamma$.
Write
$\mathbb F$
for the collection of weakly dependent random fields.
If $\mu$ is an arbitrary shift-invariant random field
and $\nu$ a weakly dependent random field,
then
 the \emph{specific free energy (SFE)}
of $\mu$ relative
to $\nu$ is defined by
\[
  h(\mu|\nu):=\lim_{n\to\infty}|\Delta_n|^{-1}\mathcal H_{\Delta_n}(\mu|\nu)\in[0,\infty].
  \]
  Lemma~\ref{lemma_duality_same_SFE} asserts that the limit converges for any choice of $\mu$
  and $\nu$.
  The quantity $h(\mu|\nu)$ is also sometimes called
  the \emph{entropy density} of $\mu$ with respect to $\nu$.
  Write $h_0(\nu)$ for the set of shift-invariant random fields
  $\mu$ with $h(\mu|\nu)=0$.
  Measures $\mu\in h_0(\nu)$
  are called \emph{minimizers} of $\nu$.

  \subsection{DLR states}
  Now consider a random field $\mu$
   and a finite set $\Lambda\in\mathcal S$.
  Write $\mu_\Lambda^\omega$
  for the regular conditional probability distribution (r.c.p.d.)\ on $(E^\Lambda,\mathcal E^\Lambda)$ of $\mu$
  corresponding to the projection map $\sigma_{S\setminus\Lambda}:\Omega\to E^{S\setminus\Lambda}$.
  Informally, this is the distribution of $\omega_\Lambda$ in $\mu$ given the states of $\omega$
  outside $\Lambda$.
  Suppose that we are given an arbitrary specification $\gamma$.
  A \emph{Dobrushin-Lanford-Ruelle (DLR) state}
  is a random field $\mu\in\mathcal P(\Omega,\mathcal F)$ which
   satisfies the \emph{DLR equation}
  $\mu=\mu\gamma_\Lambda$ for every $\Lambda\in\mathcal S$.
  In other words, $\mu$ is a DLR state if and only if
  $\mu_\Lambda^\omega=\hat\gamma(\cdot,\omega)$
  for $\mu$-almost every $\omega\in\Omega$,
  for each $\Lambda\in\mathcal S$.
  Write $\mathcal G(\gamma)$ for the set of DLR states,
  and $\mathcal G_\Theta(\gamma):=\mathcal G(\gamma)\cap\mathcal P_\Theta(\Omega,\mathcal F)$
  for the set of shift-invariant DLR states.

\subsection{Topologies}
The \emph{topology of local convergence} or \emph{$\mathcal L$-topology}
on $\Omega$
is the coarsest topology on $\Omega$
that makes the map $\omega\mapsto\omega_x$ continuous for every $x\in\mathbb Z^d$,
with respect to the discrete topology on $E$.
This means that $\omega^n\to\omega$
if and only if for any $\Lambda\in\mathcal S$, we have
$\omega^n_\Lambda=\omega_\Lambda$ for $n$ sufficiently large.

Consider an arbitrary standard Borel space $(X,\mathcal X)$.
The \emph{strong topology} on $\mathcal M(X,\mathcal X)$
is the coarsest topology that makes the map
$\mu\mapsto\mu(A)$ continuous for every $A\in\mathcal X$.
If $\mathcal B\subset\mathcal P(X,\mathcal X)$
is a convex set of probability measures subject to $\operatorname{Diam}^\infty\mathcal B$
being finite,
then write $\mathcal C(\mathcal B)$
for the closure of $\mathcal B$ in the strong topology.
In Lemma~\ref{lemma_mazur} we present an alternative definition
for $\mathcal C(\mathcal B)$, which we demonstrate is equivalent.

The \emph{topology of local convergence} or \emph{$\mathcal L$-topology}
on $\mathcal P(\Omega,\mathcal F)$
is the coarsest topology on $\mathcal P(\Omega,\mathcal F)$
that makes the map $\mu\mapsto\mu(A)$ continuous for every $A\in\cup_{\Lambda\in\mathcal S}\mathcal F_\Lambda$.
This means that $\mu^n\to\mu$
in the $\mathcal L$-topology if and only
if $\sigma_\Lambda(\mu^n)\to\mu_\Lambda$ in the strong topology
on $\mathcal P(E^\Lambda,\mathcal E^\Lambda)$
for every $\Lambda\in\mathcal S$.

Remark that we do not assume a topology on
the state space $E$.
A topology is not even implied, because the $\mathcal L$-topology
on measures originates from the strong topology on measures.
In some sense, the $\mathcal L$-topology thus
alludes to the discrete topology on $E$ --- this holds
true both when considered a topology on $\Omega$,
and when considered a topology on $\mathcal P(\Omega,\mathcal F)$.

\subsection{Limits of finite-volume Gibbs measures}
Let $\gamma$ be a weakly dependent specification.
Write $\mathcal W(\gamma)$ for the set of limits of
finite-volume Gibbs measures
in the $\mathcal L$-topology, that is,
\[
  \mathcal W(\gamma):=\left\{
    \mu\in\mathcal P(\Omega,\mathcal F):
    \text{$\nu^n\gamma_{\Delta_n}\to\mu$ in the $\mathcal L$-topology for some $(\nu^n)_{n\in\mathbb N}\subset \mathcal P(\Omega,\mathcal F)$}
  \right\}.
\]
It is immediate that $\mathcal G(\gamma)\subset\mathcal W(\gamma)$:
if $\mu\in\mathcal G(\gamma)$,
then $\mu\gamma_{\Delta_n}\to\mu$ and therefore $\mu\in\mathcal W(\gamma)$.
We write $\nu^n\gamma_{\Delta_n}$ in this definition
and not $\nu^n\hat\gamma_{\Delta_n}$
so that all measures live in the same space
and convergence in the $\mathcal L$-topology makes sense.
For simplicity the definition is in terms of the exhaustive sequence
$(\Delta_n)_{n\in\mathbb N}$;
it is straightforward to verify that the definition is
the same if we replace this sequence by any other increasing
exhaustive sequence.
Write $\mathcal W_\Theta(\gamma):=\mathcal W(\gamma)\cap\mathcal P_\Theta(\Omega,\mathcal F)$.
We shall later see that $h_0(\gamma)=\mathcal W_\Theta(\gamma)$.

\subsection{Continuity of the specification}
Consider a weakly dependent specification $\gamma$.
We are going to define more sets of finite-volume Gibbs measures,
now restricting the boundary conditions that are allowed.
For any $\Lambda,\Delta\in\mathcal S$ and $\omega\in\Omega$,
define
\[
  \mathcal B_{\Lambda,\Delta,\omega}(\gamma):=\{
  \mu\hat\gamma_\Lambda
  :
  \text{$\mu\in\mathcal P(\Omega,\mathcal F)$ such that $\mu_\Delta=\delta_{\omega_\Delta}$}
  \}
  \subset
  \mathcal B_\Lambda(\gamma)
  .
\]
The sets
$\mathcal B_\Lambda(\gamma)$ and $\mathcal B_{\Lambda,\Delta,\omega}(\gamma)$ are convex,
and
 $\mathcal B_{\Lambda,\Delta,\omega}(\gamma)$ is decreasing in $\Delta$.
Define
\[
\mathcal B_{\Lambda,\omega}(\gamma)
:=
\cap_{\Delta\in\mathcal S}
\mathcal C(\mathcal B_{\Lambda,\Delta,\omega}(\gamma))
=
\cap_{n\in\mathbb N}\mathcal C(\mathcal B_{\Lambda,\Delta_n,\omega}(\gamma)).
\]
Consider a measure $\mu\in\mathcal P(E^\Lambda,\mathcal E^\Lambda)$.
Then $\mu\in\mathcal B_{\Lambda,\omega}$
if and only if
$
  \nu^n\hat\gamma_\Lambda\to\mu
$
in the strong topology
for some sequence of random fields
$(\nu^n)_{n\in\mathbb N}$
converging to $\delta_\omega$ in the $\mathcal L$-topology.

The alternative characterization of $\mathcal B_{\Lambda,\omega}(\gamma)$ implies that
 $\delta_\omega\hat\gamma_\Lambda=\hat\gamma_\Lambda(\cdot,\omega)\in \mathcal B_{\Lambda,\omega}(\gamma)$.
 Define
 \begin{align*}
   \Omega_\gamma
   &:=
   \{\omega\in\Omega:\text{$\mathcal B_{\Lambda,\omega}(\gamma)=\{\hat\gamma_\Lambda(\cdot,\omega)\}$
   for any $\Lambda\in\mathcal S$}\}
   \\&\phantom{:}=
   \{\omega\in\Omega:\text{$\mathcal |\mathcal B_{\Lambda,\omega}(\gamma)|=1$
   for any $\Lambda\in\mathcal S$}\}
   .
 \end{align*}
In other words,
$\Omega_\gamma$
is the set of configurations $\omega\in\Omega$
such that the map $\zeta\mapsto\gamma_\Lambda(\cdot,\zeta)$
is continuous --- both sides endowed with the $\mathcal L$-topology --- at
$\omega$ for any $\Lambda\in\mathcal S$.
% $\omega\in\Omega_\gamma$
% if and only if $\nu^n\hat\gamma_\Lambda\to \hat\gamma(\cdot,\omega)$
% in the strong topology for any $\Lambda\in\mathcal S$
% and for any sequence of random fields $(\nu^n)_{n\in\mathbb N}$
% converging to $\delta_\omega$ in the $\mathcal L$-topology.
If $\omega\in\Omega_\gamma$,
then we say
 that the specification $\gamma$
is \emph{continuous} or \emph{quasilocal} at $\omega$.
If $\Omega_\gamma=\Omega$,
then each DLR state of $\gamma$ is also called a \emph{Gibbs measure}.
If $\mu\in\mathcal G(\gamma)$ and $\mu(\Omega_\gamma)=1$,
then $\mu$ is called an \emph{almost Gibbs measure}.
This makes sense even if $\Omega_\gamma\neq\Omega$.

%!TEX root=../main.tex

\section{Main results} \label{s_main}
\subsection{The specific free energy}
Consider a weakly dependent specification
$\gamma$.
We prove that for any shift-invariant random field $\mu$,
the SFE
\[
  h(\mu|\gamma)
  :=
  \lim_{n\to\infty}
  |\Delta_n|^{-1}\mathcal H_{\Delta_n}(\mu|\nu\gamma_{\Delta_n})\in[0,\infty]
\]
is well-defined, and independent of the choice of $\nu\in\mathcal P(\Omega,\mathcal F)$
(Lemma~\ref{lemma_sfe}).
Moreover,
we show that the level sets of the SFE --- given by $\{h(\cdot|\gamma)\leq C\}\subset\mathcal P_\Theta(\Omega,\mathcal F)$
for $C\in [0,\infty)$ --- are compact in the topology of local convergence,
and that $h_0(\gamma)=\{h(\cdot|\gamma)=0\}$
is nonempty (Lemma~\ref{lemma_level_set_compactness}).
We prove the first half of the variational principle,
which asserts that $\mathcal G_\Theta(\gamma)\subset h_0(\gamma)$ (Corollary~\ref{corollary_first_half}).

\subsection{Minimizers of the specific free energy}
\label{subsection_main_results_minimizers}
Next,
we focus on the set of minimizers $h_0(\gamma)$
of the weakly dependent specification $\gamma$.
We find some alternative characterizations for the set of minimizers.
In particular, if $\mu$ is a shift-invariant random field,
then the following are equivalent:
\begin{enumerate}
  \item $\mu\in h_0(\gamma)$, that is, $\mu$ is a minimizer of $\gamma$,
  \item $\mu\in \mathcal W(\gamma)$, that is, $\mu$ is a limit of finite-volume Gibbs measures,
  \item $\mu_{\Delta_n}\in \mathcal C(\mathcal B_{\Delta_n}(\gamma))$
  for each $n\in\mathbb N$;
\end{enumerate}
see
Lemma~\ref{lemma_finite_volume_limits} and
Corollary~\ref{corollary_replacing_symm_max_entr}.
Moreover, if $\mu$ is a minimizer,
then we demonstrate that
\begin{enumerate}
  \item $\mu$ is almost Gibbs if $\mu(\Omega_\gamma)=1$,
  \item $\mu_\Lambda^\omega\in\mathcal B_{\Lambda,\omega}$ for $\mu$-almost every $\omega$,
  for each $\Lambda\in\mathcal S$,
  \item $\mu$ has finite energy, in the sense of Burton and Keane.
\end{enumerate}
The first statement follows almost immediately from the second,
see Lemma~\ref{lemma_rcpd}
and Corollary~\ref{corollary_second_half}.
The third statement requires a short argument, see Corollary~\ref{lemma_applications_finite_energy}.

\subsection{The relation between $\mathbb F$ and $\mathbb S$}
\label{subsection_main_results_abstract_rela}
Now take a more abstract viewpoint,
and consider the set
of all weakly dependent random fields $\mathbb F$.
Choose a weakly dependent specification $\gamma\in\mathbb S$
and a minimizer $\nu\in\mathbb F$ of $\gamma$.
First, we prove that
$h(\mu|\nu)$ is well-defined and equal to $h(\mu|\gamma)$ for any shift-invariant random field $\mu$
 (Lemma~\ref{lemma_duality_same_SFE}).
This implies in particular that $h_0(\nu)=h_0(\gamma)$.
For $\mu,\nu\in\mathbb F$,
we declare $\mu\sim\nu$ if $h(\mu|\nu)=0$.
We prove that $\sim$ is an equivalence relation.
Write $\mathbb F^*$ for the
partition of $\mathbb F$ into equivalence classes.
This provides a canonical way to partition the set of
specifications $\mathbb S$ as well:
define the map
\[
\Xi:\mathbb S\to \mathbb F^*,\,\gamma\mapsto h_0(\gamma),
\]
and write $\mathbb S^*$ for the partition of $\mathbb S$ into
the level sets of $\Xi$.
This makes $\Xi$ into a bijection from $\mathbb S^*$ to $\mathbb F^*$ --- the
original map $\Xi$ was surjective by definition a weakly dependent random field.

\subsection{The variational principle in the weakly dependent setting}
Consider a weakly dependent specification $\gamma$.
The previous results provide efficient machinery for
attacking the variational principle.
Consider an arbitrary shift-invariant random field
$\mu$.
The variational principle asserts that
\[
  \numberthis
  \label{eq_var_pr_equivalence}
  \mu\in h_0(\gamma)\iff\text{$\mu$ is almost Gibbs with respect to $\gamma$}.
\]

To prove the variational principle for the model of interest,
we must always derive two results.
First, we must show that the specification $\gamma$ corresponding to the model is
indeed weakly dependent.
Second,
one must show that $\mu(\Omega_\gamma)=1$
for any minimizer $\mu$ of $\gamma$.
The variational principle then follows from Corollaries~\ref{corollary_first_half} and~\ref{corollary_second_half}.

Once weak dependence of the specification has been established,
the systematic study of the minimizers of the SFE
provides a number of useful properties
that minimizers of the SFE automatically have --- see Subsection~\ref{subsection_main_results_minimizers}.
This usually makes it easier to prove that $\mu(\Omega_\gamma)=1$
for arbitrary minimizers $\mu$.

We chose to formulate the variational principle
with respect to the standard entropy functional $\mathcal H$.
It is also possible to use the max-entropy $\mathcal H^\infty$
for this purpose.
To that end, simply replace
the set $h_0(\gamma)$ in (\ref{eq_var_pr_equivalence}) with the set
\[
h_0^\infty(\gamma):=\left\{
\mu\in\mathcal P_\Theta(\Omega,\mathcal F):
|\Delta_n|^{-1}
\mathcal H^\infty_{\Delta_n}(\mu|\nu\gamma_{\Delta_n})\to0
\right\}.
\]
We shall derive that $h_0^\infty(\gamma)=h_0(\gamma)$
for any weakly dependent specification $\gamma$.
The inclusion $h_0^\infty(\gamma)\subset h_0(\gamma)$
follows from the fact that $\mathcal H^\infty(\mu|\nu)\geq\mathcal H(\mu|\nu)$ for any $\mu,\nu\in\mathcal P(X,\mathcal X)$.
The other inclusion follows from
Lemma~\ref{lemma_mazur} and Corollary~\ref{corollary_replacing_symm_max_entr},
jointly with the definition of a weakly dependent specification.

\subsection{Applications}
The weakly dependent setting is very general:
it contains most nonpathological non-gradient models that do not have
some form of combinatorial exclusion (such as for example the dimer models,
which have a non-gradient interpretation but which are not weakly dependent).
We start by showing how to naturally fit two known variational principles into our framework.
Then we derive the variational principle for the loop $O(n)$ model,
and finally
 we derive new results for the Ising model in a random percolation environment.

In Subsection~\ref{subsection_abs_summable},
 we show how to
efficiently derive the variational principle
for models that are defined in terms of an absolutely summable interaction potential.
This setting is treated in the classical work of Georgii~\cite{GEORGII}.
For such models we find that $\Omega=\Omega_\gamma$,
meaning that all almost Gibbs measures are in fact Gibbs.
In Subsection~\ref{subsection_rcm},
 we show how
to derive the variational principle for the random-cluster model.
The original proof is due to Sepp\"al\"ainen~\cite{SEPPALAINEN}.
The proofs (the one of Sepp\"al\"ainen and the one presented here) rely on the finite energy of minimizers of the SFE,
which implies that there is at most one infinite cluster almost surely
with respect to such measures (see Burton and Keane~\cite{BURTON}).
In Subsection~\ref{subsection_loopOn},
we discuss how to derive the variational principle for the
loop $O(n)$ model, by analogy with the random-cluster model.
This result is new.
We also discuss how to derive the variational principle
for similar models.
In Subsection~\ref{subsection_ising}, we prove the variational principle
for the Ising model in a random percolation environment,
in the nonmagnetic phase.
Moreover, we demonstrate that the minimizer of the SFE is unique.
This is a new result. The variational principle
was previously derived for the subcritical percolation phase in~\cite{KULSKE}.
% It implies a dichotomy:
% it is known that the variational principle
% fails when the Ising model does magnetize on the percolation clusters~\cite{KULSKE}.
% Our new results thus complement that of~\cite{KULSKE}.
% The progress that we make is due mainly to a more robust
% characterization of the specific free energy,
% together with some general results on its minimizers.

%!TEX root=../main.tex

\section{The specific free energy} \label{s_sfe}

This section has two main goals.
The first goal is to prove Lemma~\ref{lemma_sfe},
which asserts that the SFE is well-defined
for weakly dependent specifications.
It also provides some useful identities.
As an immediate corollary we observe that DLR states minimize the SFE.
The second goal is to prove Lemma~\ref{lemma_level_set_compactness},
which asserts that the level sets of the SFE are compact
in the $\mathcal L$-topology,
and that there exist measures with zero SFE.

\subsection{Consistency of the definition}
The definition of the SFE relies on two key lemmas.
Lemma~\ref{lemma_superadditivity} concerns superadditivity of a useful quantity.
Lemma~\ref{lemma_obvious_bound} bounds
the difference of two relative entropies in terms of the max-entropy.

%%%%%%%%%%%%%%%%%%%%%%%%%%%%%%%%%%%%%%%%%%%%%%%%%%%%%%%%%%%%%%%%%%%%%%%%%%%%%%%%
%%%%%%%%%%%%%%%%%%%%%%%%%%%%%%%%%%%%%%%%%%%%%%%%%%%%%%%%%%%%%%%%%%%%%%%%%%%%%%%%
%%%%%%%%%%%%%%%%%%%%%%%%%%%%%%%%%%%%%%%%%%%%%%%%%%%%%%%%%%%%%%%%%%%%%%%%%%%%%%%%

% Superadditivity lemma

%%%%%%%%%%%%%%%%%%%%%%%%%%%%%%%%%%%%%%%%%%%%%%%%%%%%%%%%%%%%%%%%%%%%%%%%%%%%%%%%
%%%%%%%%%%%%%%%%%%%%%%%%%%%%%%%%%%%%%%%%%%%%%%%%%%%%%%%%%%%%%%%%%%%%%%%%%%%%%%%%
%%%%%%%%%%%%%%%%%%%%%%%%%%%%%%%%%%%%%%%%%%%%%%%%%%%%%%%%%%%%%%%%%%%%%%%%%%%%%%%%

\begin{lemma}
  \label{lemma_superadditivity}
  Let $\gamma$ denote any specification
  and $\mu$ a random field.
  Consider a finite pairwise disjoint family of finite sets $(\Lambda_k)_{1\leq k\leq n}\subset\mathcal S$, and write $\Lambda:=\cup_k\Lambda_k\in\mathcal S$.
  Then
  \[
  \inf_{\rho\in\mathcal P(\Omega,\mathcal F)}
    \mathcal H_\Lambda(\mu|\rho\gamma_\Lambda)
  \geq
  \sum_k
    \inf_{\rho\in\mathcal P(\Omega,\mathcal F)}
      \mathcal H_{\Lambda_k}(\mu|\rho\gamma_{\Lambda_k})
  .
  \]
\end{lemma}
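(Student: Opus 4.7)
The plan is to prove the $n=2$ case directly and then deduce the general result by induction on $n$, using the decomposition $\Lambda_1\cup\cdots\cup\Lambda_n = \Lambda_1 \sqcup (\Lambda_2\cup\cdots\cup\Lambda_n)$.

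For the base case I fix an arbitrary $\rho \in \mathcal P(\Omega,\mathcal F)$, write $\Lambda := \Lambda_1 \sqcup \Lambda_2$ and $\nu := \rho\gamma_\Lambda$, and apply the standard chain rule for relative entropy to the $\Lambda$-marginals of $\mu$ and $\nu$, conditioning on the $\Lambda_2$-coordinate:
\[
\mathcal H_\Lambda(\mu|\nu) = \mathcal H_{\Lambda_2}(\mu|\nu) + \int \mathcal H\bigl(\mu_{\Lambda_1}^{\omega_{\Lambda_2}}\,\big|\,\nu_{\Lambda_1}^{\omega_{\Lambda_2}}\bigr)\,d\mu(\omega_{\Lambda_2}),
\]
where the superscript $\omega_{\Lambda_2}$ denotes the r.c.p.d.\ on $(E^{\Lambda_1},\mathcal E^{\Lambda_1})$ given the $\Lambda_2$-coordinate. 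This reduces the problem to a separate treatment of the two summands.

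For the first term I would invoke consistency of $\gamma$: since $\Lambda_2 \subset \Lambda$, we have $\nu = \rho\gamma_\Lambda = \rho\gamma_\Lambda\gamma_{\Lambda_2} = \nu\gamma_{\Lambda_2}$, and hence the $\Lambda_2$-marginal $\nu_{\Lambda_2} = \nu\hat\gamma_{\Lambda_2}$ lies in $\mathcal B_{\Lambda_2}(\gamma)$; in particular $\mathcal H_{\Lambda_2}(\mu|\nu) \geq \inf_{\rho'} \mathcal H_{\Lambda_2}(\mu|\rho'\gamma_{\Lambda_2})$. For the integral term, Jensen's inequality combined with the joint convexity of $(p,q)\mapsto\mathcal H(p|q)$ gives
\[
\int \mathcal H\bigl(\mu_{\Lambda_1}^{\omega_{\Lambda_2}}\,\big|\,\nu_{\Lambda_1}^{\omega_{\Lambda_2}}\bigr)\,d\mu(\omega_{\Lambda_2}) \geq \mathcal H\bigl(\mu_{\Lambda_1}\,\big|\,\tilde\nu_{\Lambda_1}\bigr), \quad \tilde\nu_{\Lambda_1} := \int \nu_{\Lambda_1}^{\omega_{\Lambda_2}}\,d\mu(\omega_{\Lambda_2}).
\]

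The main obstacle is to verify that $\tilde\nu_{\Lambda_1}$ again belongs to $\mathcal B_{\Lambda_1}(\gamma)$; here consistency must be exploited in the opposite direction. The identity $\nu = \nu\gamma_{\Lambda_1}$ forces the full r.c.p.d.\ $\nu_{\Lambda_1}^{\omega_{S\setminus\Lambda_1}}$ to equal $\hat\gamma_{\Lambda_1}(\cdot,\omega)$, and the tower property rewrites the coarser conditional $\nu_{\Lambda_1}^{\omega_{\Lambda_2}}$ as a mixture of the kernels $\hat\gamma_{\Lambda_1}(\cdot,\omega)$ against the $\nu$-conditional of $\omega_{S\setminus\Lambda_2}$ given $\omega_{\Lambda_2}$. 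If $\rho'\in\mathcal P(\Omega,\mathcal F)$ is defined to have $\Lambda_2$-marginal $\mu_{\Lambda_2}$ and the same $\mathcal F_{S\setminus\Lambda_2}$-conditional as $\nu$, then this mixture is exactly $\rho'\hat\gamma_{\Lambda_1}$, so $\tilde\nu_{\Lambda_1} \in \mathcal B_{\Lambda_1}(\gamma)$ and $\mathcal H(\mu_{\Lambda_1}|\tilde\nu_{\Lambda_1}) \geq \inf_{\rho'} \mathcal H_{\Lambda_1}(\mu|\rho'\gamma_{\Lambda_1})$. Summing the two lower bounds and taking $\inf_\rho$ on the left-hand side settles the case $n=2$; iterating the two-set inequality via the split $\Lambda_1 \sqcup (\Lambda_2\cup\cdots\cup\Lambda_n)$ then yields the general statement.
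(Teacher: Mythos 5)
Your proof is correct and takes essentially the same route as the paper's. The paper also reduces to $n=2$ by induction and decomposes $\mathcal H_\Lambda$ via the chain rule, conditioning on $\Lambda_1$ rather than $\Lambda_2$ (a pure relabeling); your invocation of joint convexity of $(p,q)\mapsto\mathcal H(p|q)$ to collapse the integral term into $\mathcal H(\mu_{\Lambda_1}|\tilde\nu_{\Lambda_1})$ is mathematically the same step the paper carries out by hand --- re-expressing the integral as $\mathcal H_\Lambda(\mu|\mu_{\Lambda_1}\nu^\zeta)$ via a second chain-rule identity and then applying monotonicity of relative entropy under projection --- and the hybrid measure you build (with $\mu$'s marginal and $\nu$'s consistency-inherited conditional) is precisely the paper's $\mu_{\Lambda_1}\nu^\zeta$.
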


\begin{proof}
  Fix $\nu\in\mathcal P(\Omega,\mathcal F)$,
  and replace $\nu$ by $\nu\gamma_\Lambda$ if the two are not equal.
  We must demonstrate that
  \[
    \mathcal H_\Lambda(\mu|\nu)
  \geq
  \sum_k
    \inf_{\rho\in\mathcal P(\Omega,\mathcal F)}
      \mathcal H_{\Lambda_k}(\mu|\rho\gamma_{\Lambda_k})
  .
  \]
  By induction, it is sufficient to consider the case $n=2$. We have
  \[
  \numberthis
  \label{lemma_superadditivity_rcpd_decomposition}
      \mathcal H_\Lambda(\mu|\nu)
    =
      \mathcal H_{\Lambda_1}(\mu|\nu)
    +
      \int_{E^{\Lambda_1}}
        \mathcal H_{\Lambda_2}(\mu^\zeta|\nu^\zeta)
      d\mu_{\Lambda_1}(\zeta),
  \]
  where $\mu^\zeta$ and $\nu^\zeta$ denote the r.c.p.d.\ on $(\Omega,\mathcal F)$ of
  $\mu$ and $\nu$ respectively corresponding to the projection map
  $\Omega\to E^{\Lambda_1}$.
  Recall that $\nu=\nu\gamma_{\Lambda}$.
  For the first term on the right in (\ref{lemma_superadditivity_rcpd_decomposition}), consistency of $\gamma$ implies that $\nu=\nu\gamma_{\Lambda_1}$ and
  \[
    \mathcal H_{\Lambda_1}(\mu|\nu)
    =\mathcal H_{\Lambda_1}(\mu|\nu\gamma_{\Lambda_1})
    \geq
    \inf_{\rho\in\mathcal P(\Omega,\mathcal F)}
      \mathcal H_{\Lambda_1}(\mu|\rho\gamma_{\Lambda_1}).
  \]
  The goal is to obtain a similar lower bound for the integral
  in (\ref{lemma_superadditivity_rcpd_decomposition}).
  Assume in the sequel that
  $\mathcal H_{\Lambda_1}(\mu|\nu)$ is finite;
  the lemma follows from (\ref{lemma_superadditivity_rcpd_decomposition}) if it is not.
  This means in particular that $\mu_{\Lambda_1}\ll\nu_{\Lambda_1}$.
  Formally, $\mu^\zeta$ and $\nu^\zeta$ are probability kernels
  from
  $(E^{\Lambda_1},\mathcal E^{\Lambda_1})$
  to $(\Omega,\mathcal F)$,
  which may be measured by $\mu_{\Lambda_1}$.
  Moreover, these kernels satisfy $\sigma_{\Lambda_1}(\mu^\zeta)=
  \sigma_{\Lambda_1}(\nu^\zeta)=\delta_\zeta$.
  First we assert that
  \[
  \int_{E^{\Lambda_1}}
    \mathcal H_{\Lambda_2}(\mu^\zeta|\nu^\zeta)
  d\mu_{\Lambda_1}(\zeta)
  =
    \mathcal H_{\Lambda}(\mu_{\Lambda_1}\mu^\zeta|\mu_{\Lambda_1}\nu^\zeta).
  \]
  It is straightforward to see
  that this holds true:
  an expansion of the expression on the right in this display
  similar to the expansion in (\ref{lemma_superadditivity_rcpd_decomposition})
  yields the integral on the left plus the entropy term $\mathcal H(\mu_{\Lambda_1},\mu_{\Lambda_1})=0$.
  It is clear that $\mu_{\Lambda_1}\mu^\zeta=\mu$.
  For the other kernel, we observe that
  $\nu^\zeta=\nu^\zeta\gamma_{\Lambda_2}$ by consistency
  for $\nu_{\Lambda_1}$-almost every $\zeta$,
  and therefore also for $\mu_{\Lambda_1}$-almost every $\zeta$.
  In particular, this means that
  \[
      \mathcal H_{\Lambda}(\mu_{\Lambda_1}\mu^\zeta|\mu_{\Lambda_1}\nu^\zeta)
      =
      \mathcal H_{\Lambda}(\mu|\mu_{\Lambda_1}\nu^\zeta\gamma_{\Lambda_2})
      \geq
      \mathcal H_{\Lambda_2}(\mu|\mu_{\Lambda_1}\nu^\zeta\gamma_{\Lambda_2})
      \geq
      \inf_{\rho\in\mathcal P(\Omega,\mathcal F)}
        \mathcal H_{\Lambda_2}(\mu|\rho\gamma_{\Lambda_2}).
        \qedhere
  \]
\end{proof}

%%%%%%%%%%%%%%%%%%%%%%%%%%%%%%%%%%%%%%%%%%%%%%%%%%%%%%%%%%%%%%%%%%%%%%%%%%%%%%%%
%%%%%%%%%%%%%%%%%%%%%%%%%%%%%%%%%%%%%%%%%%%%%%%%%%%%%%%%%%%%%%%%%%%%%%%%%%%%%%%%
%%%%%%%%%%%%%%%%%%%%%%%%%%%%%%%%%%%%%%%%%%%%%%%%%%%%%%%%%%%%%%%%%%%%%%%%%%%%%%%%

% Bound in terms of max-entropy

%%%%%%%%%%%%%%%%%%%%%%%%%%%%%%%%%%%%%%%%%%%%%%%%%%%%%%%%%%%%%%%%%%%%%%%%%%%%%%%%
%%%%%%%%%%%%%%%%%%%%%%%%%%%%%%%%%%%%%%%%%%%%%%%%%%%%%%%%%%%%%%%%%%%%%%%%%%%%%%%%
%%%%%%%%%%%%%%%%%%%%%%%%%%%%%%%%%%%%%%%%%%%%%%%%%%%%%%%%%%%%%%%%%%%%%%%%%%%%%%%%

\begin{lemma}
  \label{lemma_obvious_bound}
Let $(X,\mathcal X)$ denote a measurable space,
and consider $\mathcal B\subset \mathcal M(X,\mathcal X)$
with $\operatorname{Diam}^\infty\mathcal B$ finite.
Then for any finite measure $\mu\in\mathcal M(X,\mathcal X)$
and for any $\nu,\nu'\in\mathcal B$,
we have
\begin{equation*}
  |\mathcal H(\mu|\nu)-\mathcal H(\mu|\nu')|\leq
  \mu(X)\operatorname{Diam}^\infty\mathcal B,
\end{equation*}
where we interpret $|\infty-\infty|$ as $0$.
\end{lemma}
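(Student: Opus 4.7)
The plan is to reduce everything to a change-of-reference-measure computation, exploiting that max-entropy control between $\nu$ and $\nu'$ translates into a uniform $L^\infty$ bound on their Radon-Nikodym derivative. Set $D:=\operatorname{Diam}^\infty\mathcal B$, which is finite by hypothesis. From the definition of max-entropy we obtain $\nu\leq e^D\nu'$ and $\nu'\leq e^D\nu$, so $\nu$ and $\nu'$ are mutually absolutely continuous, and the derivative $h:=d\nu/d\nu'$ satisfies $e^{-D}\leq h\leq e^D$ almost everywhere.

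Next I would dispose of the degenerate cases. If $\mu\not\ll\nu$, then since $\nu$ and $\nu'$ share the same null sets, also $\mu\not\ll\nu'$, hence both entropies are $\infty$ and the inequality holds by the convention $|\infty-\infty|=0$. Otherwise $\mu\ll\nu$, and automatically $\mu\ll\nu'$. Set $f:=d\mu/d\nu$ and $g:=d\mu/d\nu'$. The chain rule yields $g=fh$ $\mu$-a.e., hence $\log g=\log f+\log h$ $\mu$-a.e. Integrating against $\mu$ (assume momentarily that $\mathcal H(\mu|\nu)$ is finite) gives
\[
\mathcal H(\mu|\nu')-\mathcal H(\mu|\nu)=\int \log h\,d\mu,
\]
and since $|\log h|\leq D$ $\mu$-a.e.\ and $\mu$ is finite, the right-hand side is bounded in absolute value by $D\,\mu(X)$, as required.

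The only remaining case is $\mu\ll\nu$ with $\mathcal H(\mu|\nu)=\infty$. The same identity, now read in the extended sense, forces $\mathcal H(\mu|\nu')=\infty$ as well, because adding the finite quantity $\int\log h\,d\mu$ cannot tame an infinite entropy; the convention again takes care of the bound. I do not expect any serious obstacle: the content is a one-line computation, and the only care required is the bookkeeping around infinite values and the justification that $\mu\ll\nu\iff\mu\ll\nu'$, which follows immediately from the two-sided domination $e^{-D}\nu'\leq\nu\leq e^D\nu'$.
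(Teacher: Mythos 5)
Your proof is correct and follows essentially the same route as the paper: identify $d\nu/d\nu'$ with the ratio of the two Radon--Nikodym derivatives of $\mu$, bound its logarithm pointwise by $\operatorname{Diam}^\infty\mathcal B$, and integrate against $\mu$. The paper states this in one short display ($d\nu/d\nu'=f'/f$ and $|\log f-\log f'|\le\operatorname{Diam}^\infty\mathcal B$), while you spell out the intermediate reduction and the bookkeeping around $\mu\not\ll\nu$ and infinite entropy, but the underlying argument is identical.
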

\begin{proof}
  Note that $\mu\ll\nu$ if and only if $\mu\ll\nu'$.
  Write $f:=d\mu/d\nu$ and $f':=d\mu/d\nu'$.
  Then $\mu$-almost everywhere $d\nu/d\nu'=f'/f$
  and  $|\log f'-\log f|\leq \operatorname{Diam}^\infty\mathcal B$.
  In particular,
  \begin{equation*}
    |\mathcal H(\mu|\nu)-\mathcal H(\mu|\nu')|
    =|\mu(\log f)-\mu(\log f')|
    \leq\mu(|\log f-\log f'|)
    \leq
    \mu(X)\operatorname{Diam}^\infty\mathcal B.\qedhere
  \end{equation*}
\end{proof}

%%%%%%%%%%%%%%%%%%%%%%%%%%%%%%%%%%%%%%%%%%%%%%%%%%%%%%%%%%%%%%%%%%%%%%%%%%%%%%%%
%%%%%%%%%%%%%%%%%%%%%%%%%%%%%%%%%%%%%%%%%%%%%%%%%%%%%%%%%%%%%%%%%%%%%%%%%%%%%%%%
%%%%%%%%%%%%%%%%%%%%%%%%%%%%%%%%%%%%%%%%%%%%%%%%%%%%%%%%%%%%%%%%%%%%%%%%%%%%%%%%

% SFE definition lemma

%%%%%%%%%%%%%%%%%%%%%%%%%%%%%%%%%%%%%%%%%%%%%%%%%%%%%%%%%%%%%%%%%%%%%%%%%%%%%%%%
%%%%%%%%%%%%%%%%%%%%%%%%%%%%%%%%%%%%%%%%%%%%%%%%%%%%%%%%%%%%%%%%%%%%%%%%%%%%%%%%
%%%%%%%%%%%%%%%%%%%%%%%%%%%%%%%%%%%%%%%%%%%%%%%%%%%%%%%%%%%%%%%%%%%%%%%%%%%%%%%%

\begin{lemma}
  \label{lemma_sfe}
  The specific free energy functional
  $h(\cdot|\gamma):\mathcal P_\Theta(\Omega,\mathcal F)\to[0,\infty]$ satisfies
  \begin{alignat}{2}
    \label{lemma_sfe_ori_lim}
    h(\mu|\gamma)
      &:=\lim\nolimits_{n\to\infty}&&|\Delta_n|^{-1}\mathcal H_{\Delta_n}(\mu|\nu\gamma_{\Delta_n})
  \\
  \label{lemma_sfe_ori_sup}
      &\phantom{:}=\sup\nolimits_{n\in\mathbb N}&&|\Delta_n|^{-1}(\mathcal H_{\Delta_n}(\mu|\nu\gamma_{\Delta_n})-\operatorname{Diam}^\infty\mathcal B_{\Delta_n}(\gamma))
  \\
    \label{lemma_sfe_inf_lim}  &\phantom{:}=\lim\nolimits_{n\to\infty}
        &&|\Delta_n|^{-1}
        \inf\nolimits_{\rho\in\mathcal P(\Omega,\mathcal F)}
          \mathcal H_{\Delta_n}(\mu|\rho\gamma_{\Delta_n})
    \\
    \label{lemma_sfe_inf_sup}
        & \phantom{:} =\sup\nolimits_{n\in\mathbb N}
            &&|\Delta_n|^{-1}
            \inf\nolimits_{\rho\in\mathcal P(\Omega,\mathcal F)}
              \mathcal H_{\Delta_n}(\mu|\rho\gamma_{\Delta_n})
  \end{alignat}
  for any weakly dependent specification $\gamma$ and for any $\nu\in\mathcal P(\Omega,\mathcal F)$.
\end{lemma}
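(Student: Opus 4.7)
The plan is to analyse two auxiliary sequences: the ``fixed reference'' sequence $a(n):=|\Delta_n|^{-1}\mathcal H_{\Delta_n}(\mu|\nu\gamma_{\Delta_n})$ appearing in \eqref{lemma_sfe_ori_lim} and \eqref{lemma_sfe_ori_sup}, and the ``infimum'' sequence $f(n):=|\Delta_n|^{-1}\inf_\rho\mathcal H_{\Delta_n}(\mu|\rho\gamma_{\Delta_n})$ appearing in \eqref{lemma_sfe_inf_lim} and \eqref{lemma_sfe_inf_sup}. Writing $\varepsilon(n):=|\Delta_n|^{-1}\operatorname{Diam}^\infty\mathcal B_{\Delta_n}(\gamma)$, the weak dependence hypothesis gives $\varepsilon(n)=o(1)$. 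The argument splits naturally in two steps: first handle $f(n)$ by a Fekete-type argument built on Lemma~\ref{lemma_superadditivity}, then bridge to $a(n)$ via Lemma~\ref{lemma_obvious_bound}.

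First I would deal with $f(n)$. The set function $\Lambda\mapsto g(\Lambda):=\inf_\rho\mathcal H_\Lambda(\mu|\rho\gamma_\Lambda)$ is shift-invariant, as follows from shift-invariance of $\mu$ and $\gamma$ after the substitution $\rho\mapsto\rho\circ\theta^{-1}$, and is superadditive on disjoint unions by Lemma~\ref{lemma_superadditivity}. Tiling $\Delta_n$ with $\lfloor|\Delta_n|/|\Delta_m|\rfloor$ disjoint translates of $\Delta_m$ (leaving a remainder of volume $o(|\Delta_n|)$), superadditivity plus shift-invariance yield
\[
    g(\Delta_n)\geq \lfloor|\Delta_n|/|\Delta_m|\rfloor\,g(\Delta_m).
\]
Dividing by $|\Delta_n|$ and taking $n\to\infty$ gives $\liminf_n f(n)\geq f(m)$ for every $m$, so $\lim_n f(n)=\sup_m f(m)$. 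This proves \eqref{lemma_sfe_inf_lim} and \eqref{lemma_sfe_inf_sup} simultaneously.

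For the bridging step, since $\nu\gamma_{\Delta_n}$ and $\rho\gamma_{\Delta_n}$ both lie in $\mathcal B_{\Delta_n}(\gamma)$, Lemma~\ref{lemma_obvious_bound} applied to the probability measure $\mu|_{\mathcal F_{\Delta_n}}$ gives the sandwich
\[
    a(n)-\varepsilon(n)\;\leq\;f(n)\;\leq\;a(n).
\]
Squeezing with $\varepsilon(n)\to 0$ shows $\lim_n a(n)=\lim_n f(n)$, which matches \eqref{lemma_sfe_ori_lim} with \eqref{lemma_sfe_inf_lim}. For \eqref{lemma_sfe_ori_sup}, one chains
\[
\sup\nolimits_n(a(n)-\varepsilon(n))\;\leq\;\sup\nolimits_n f(n)\;=\;\lim\nolimits_n f(n)\;=\;\lim\nolimits_n(a(n)-\varepsilon(n))\;\leq\;\sup\nolimits_n(a(n)-\varepsilon(n)),
\]
using successively the left inequality in the sandwich, the Fekete conclusion, $\varepsilon(n)\to0$, and the trivial bound $\lim\leq\sup$. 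Independence from $\nu$ is a last, direct application of Lemma~\ref{lemma_obvious_bound}: switching $\nu$ to $\nu'$ perturbs $a(n)$ by at most $\varepsilon(n)=o(1)$. Nonnegativity of $h(\mu|\gamma)$ is immediate from nonnegativity of relative entropy between probability measures.

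The main obstacle I anticipate is the Fekete tiling itself: one must exhibit a pairwise disjoint family of translates of $\Delta_m$ inside $\Delta_n$ of total volume $|\Delta_n|-o(|\Delta_n|)$, and carefully invoke shift-invariance of $g$ so that Lemma~\ref{lemma_superadditivity} applies verbatim. Both points are standard and elementary, but the bookkeeping is the only place where more than a line of argument is really needed; everything else is a clean combination of Lemmas~\ref{lemma_superadditivity} and~\ref{lemma_obvious_bound} with the $o(1)$ decay coming from weak dependence.
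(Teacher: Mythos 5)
Your proof is correct and follows the same route as the paper: superadditivity of $g(\Lambda):=\inf_\rho\mathcal H_\Lambda(\mu|\rho\gamma_\Lambda)$ from Lemma~\ref{lemma_superadditivity}, a Fekete-type argument to deduce $\lim_n f(n)=\sup_n f(n)$, and Lemma~\ref{lemma_obvious_bound} to bridge $f(n)$ and $a(n)$ within $\varepsilon(n)=o(1)$. The only presentational difference is that the paper invokes Georgii's Lemma 15.11 to deliver the Fekete conclusion for shift-invariant superadditive set functions, whereas you sketch the tiling argument yourself. Two small bookkeeping points to note: the number of disjoint translates of $\Delta_m$ fitting in $\Delta_n$ is $\lfloor(2n+1)/(2m+1)\rfloor^d$, not $\lfloor|\Delta_n|/|\Delta_m|\rfloor$ (asymptotically irrelevant, as you anticipate), and since Lemma~\ref{lemma_superadditivity} requires the family to partition the full box, you should explicitly include the leftover region as one more piece of the family, noting its contribution to the sum is nonnegative because relative entropy of probability measures is nonnegative.
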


\begin{proof}
  Together, Lemma~\ref{lemma_superadditivity} of the current paper and Lemma 15.11 of~\cite{GEORGII}
  assert that the sequence in (\ref{lemma_sfe_inf_lim}) converges, with limit (\ref{lemma_sfe_inf_sup}).
  % \[
  %   \lim_{n\to\infty}
  %     |\Delta_n|^{-1}
  %     \inf_{\rho\in\mathcal P(\Omega,\mathcal F)}
  %       \mathcal H_{\mathcal F_{\Delta_n}}(\mu|\rho\gamma_{\Delta_n})
  %   =
  %   \sup_{n\in\mathbb N}
  %     |\Delta_n|^{-1}
  %     \inf_{\rho\in\mathcal P(\Omega,\mathcal F)}
  %       \mathcal H_{\mathcal F_{\Delta_n}}(\mu|\rho\gamma_{\Delta_n}).
  % \]
  Lemma~\ref{lemma_obvious_bound} and
  weak dependence of $\gamma$ imply that for any $\nu\in\mathcal P(\Omega,\mathcal F)$,
  \[
    \left|
      \mathcal H_{\Delta_n}(\mu|\nu\gamma_{\Delta_n})
    -
    \inf\nolimits_{\rho\in\mathcal P(\Omega,\mathcal F)}
      \mathcal H_{\Delta_n}(\mu|\rho\gamma_{\Delta_n})
    \right|
    \leq
    \operatorname{Diam}^\infty\mathcal B_{\Delta_n}(\gamma)=o(|\Delta_n|)
  \]
  as $n\to\infty$.
  This means that (\ref{lemma_sfe_ori_lim}) and (\ref{lemma_sfe_inf_lim}) are the same.
  The inequality in the display implies that each term in the supremum in (\ref{lemma_sfe_ori_sup})
  is bounded from above by the corresponding term in (\ref{lemma_sfe_inf_sup}),
  and therefore the supremum in (\ref{lemma_sfe_ori_sup}) is bounded from above by the supremum in
  (\ref{lemma_sfe_inf_sup}).
  However, the asymptotic bound on $\operatorname{Diam}^\infty\mathcal B_{\Delta_n}(\gamma)$ implies
  that the supremum in (\ref{lemma_sfe_ori_sup}) equals at least the limit in (\ref{lemma_sfe_ori_lim}).
  Conclude that (\ref{lemma_sfe_ori_lim}), (\ref{lemma_sfe_ori_sup}), (\ref{lemma_sfe_inf_lim}) and (\ref{lemma_sfe_inf_sup}) are all equal.
\end{proof}

\begin{corollary}
  \label{corollary_first_half}
  We have $\mathcal G_\Theta(\gamma)\subset h_0(\gamma)$ whenever $\gamma$ is weakly dependent.
\end{corollary}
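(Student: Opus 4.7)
The plan is essentially immediate from Lemma~\ref{lemma_sfe}. Take any $\mu \in \mathcal G_\Theta(\gamma)$. Being shift-invariant, $\mu$ lies in the domain of the SFE functional, so $h(\mu|\gamma)$ is well-defined. Being a DLR state, $\mu$ satisfies the DLR equation $\mu = \mu \gamma_\Lambda$ for every $\Lambda \in \mathcal S$; in particular $\mu = \mu \gamma_{\Delta_n}$ for every $n \in \mathbb N$.

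The key observation is that Lemma~\ref{lemma_sfe} guarantees that the limit defining $h(\mu|\gamma)$ is independent of the choice of reference measure $\nu \in \mathcal P(\Omega,\mathcal F)$. So I would simply pick $\nu := \mu$. Then $\nu \gamma_{\Delta_n} = \mu \gamma_{\Delta_n} = \mu$ by the DLR equation, so
\[
\mathcal H_{\Delta_n}(\mu \mid \nu\gamma_{\Delta_n}) = \mathcal H_{\Delta_n}(\mu \mid \mu) = 0
\]
for every $n$. Taking the limit in (\ref{lemma_sfe_ori_lim}) gives $h(\mu|\gamma) = 0$, i.e.\ $\mu \in h_0(\gamma)$.

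There is no real obstacle here; the entire content of the corollary has already been front-loaded into Lemma~\ref{lemma_sfe}, whose statement that the defining limit exists and does not depend on $\nu$ is precisely what makes this argument go through. If one insisted on using the fixed reference measure formulation without that independence, one would instead invoke (\ref{lemma_sfe_inf_sup}), noting that for each $n$ the infimum $\inf_\rho \mathcal H_{\Delta_n}(\mu \mid \rho \gamma_{\Delta_n})$ is attained at $\rho = \mu$ with value $0$, and concluding again that $h(\mu|\gamma) = 0$.
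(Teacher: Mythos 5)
Your proof is correct and takes exactly the same route as the paper: choose $\nu=\mu$, use the DLR equation $\mu=\mu\gamma_{\Delta_n}$ to see that every term $\mathcal H_{\Delta_n}(\mu\mid\nu\gamma_{\Delta_n})$ vanishes, and invoke Lemma~\ref{lemma_sfe} to conclude $h(\mu|\gamma)=0$. The paper's version is just a one-liner saying the same thing.
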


\begin{proof}
  Consider $\mu\in\mathcal G_\Theta(\gamma)$, and apply the previous lemma with $\nu=\mu$.
\end{proof}

\subsection{Minimizers and level sets}

%%%%%%%%%%%%%%%%%%%%%%%%%%%%%%%%%%%%%%%%%%%%%%%%%%%%%%%%%%%%%%%%%%%%%%%%%%%%%%%%
%%%%%%%%%%%%%%%%%%%%%%%%%%%%%%%%%%%%%%%%%%%%%%%%%%%%%%%%%%%%%%%%%%%%%%%%%%%%%%%%
%%%%%%%%%%%%%%%%%%%%%%%%%%%%%%%%%%%%%%%%%%%%%%%%%%%%%%%%%%%%%%%%%%%%%%%%%%%%%%%%

% Level set compactness and h_0(gamma)

%%%%%%%%%%%%%%%%%%%%%%%%%%%%%%%%%%%%%%%%%%%%%%%%%%%%%%%%%%%%%%%%%%%%%%%%%%%%%%%%
%%%%%%%%%%%%%%%%%%%%%%%%%%%%%%%%%%%%%%%%%%%%%%%%%%%%%%%%%%%%%%%%%%%%%%%%%%%%%%%%
%%%%%%%%%%%%%%%%%%%%%%%%%%%%%%%%%%%%%%%%%%%%%%%%%%%%%%%%%%%%%%%%%%%%%%%%%%%%%%%%

\begin{lemma}
  \label{lemma_level_set_compactness}
  Let $\gamma$ denote a weakly dependent specification.
  Then $\{h(\cdot|\gamma)\leq C\}$ is nonempty and compact in the $\mathcal L$-topology for any $C\in[0,\infty)$. In particular, $h_0(\gamma)$ is nonempty and compact in the $\mathcal L$-topology.
\end{lemma}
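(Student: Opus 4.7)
The plan is to establish the result in three steps: lower semicontinuity of $h(\cdot|\gamma)$, precompactness of its sublevel sets, and nonemptiness of $h_0(\gamma)$.

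\textbf{Lower semicontinuity.} By identity~(\ref{lemma_sfe_ori_sup}) of Lemma~\ref{lemma_sfe}, for any fixed $\nu\in\mathcal P(\Omega,\mathcal F)$,
\[
h(\mu|\gamma)=\sup_{n\in\mathbb N}|\Delta_n|^{-1}\bigl(\mathcal H_{\Delta_n}(\mu|\nu\gamma_{\Delta_n})-\operatorname{Diam}^\infty\mathcal B_{\Delta_n}(\gamma)\bigr).
\]
The $n$-th summand depends only on the marginal $\mu_{\Delta_n}$, and $\mu\mapsto\mu_{\Delta_n}$ is continuous from the $\mathcal L$-topology into the strong topology on $\mathcal P(E^{\Delta_n},\mathcal E^{\Delta_n})$ by definition of the $\mathcal L$-topology. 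Standard convex duality (e.g.\ the Donsker--Varadhan formula $\mathcal H(\rho|\lambda)=\sup_\phi[\rho(\phi)-\log\lambda(e^\phi)]$ over bounded measurable $\phi$) shows that $\rho\mapsto\mathcal H(\rho|\lambda)$ is strongly lower semicontinuous. Hence each summand is $\mathcal L$-LSC, so is the supremum, and the sublevel set $\{h(\cdot|\gamma)\leq C\}$ is $\mathcal L$-closed.

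\textbf{Precompactness.} For every $\mu$ in the level set and every $n$, identity~(\ref{lemma_sfe_ori_sup}) gives the uniform bound
\[
\mathcal H_{\Delta_n}(\mu|\nu\gamma_{\Delta_n})\leq C|\Delta_n|+\operatorname{Diam}^\infty\mathcal B_{\Delta_n}(\gamma).
\]
By the classical theorem that, for any reference probability measure $\lambda$ on a standard Borel space, the sublevel sets $\{\rho:\mathcal H(\rho|\lambda)\leq c\}$ are sequentially compact in the strong topology on $\mathcal P$, each family of $\Delta_n$-marginals sits in a strongly compact set of $\mathcal P(E^{\Delta_n},\mathcal E^{\Delta_n})$. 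A Cantor diagonal extraction then produces, from any sequence in the level set, a subsequence along which every finite-dimensional marginal converges strongly; the limiting marginals are automatically consistent and, via the Kolmogorov extension theorem on standard Borel spaces, define a random field $\mu$ to which the subsequence $\mathcal L$-converges. Shift-invariance of $\mu$ is inherited because the shifts are $\mathcal L$-continuous and each term of the sequence is shift-invariant, and the first step yields $h(\mu|\gamma)\leq C$.

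\textbf{Nonemptiness.} Pick any $\nu\in\mathcal P(\Omega,\mathcal F)$ and set $\rho_n:=\nu\gamma_{\Delta_n}$. Since $(\rho_n)_{\Delta_n}\in\mathcal B_{\Delta_n}(\gamma)$, weak dependence gives $\mathcal H_{\Delta_n}(\rho_n|\nu'\gamma_{\Delta_n})\leq\operatorname{Diam}^\infty\mathcal B_{\Delta_n}(\gamma)=o(|\Delta_n|)$ for any reference $\nu'$. Shift-invariance of $\gamma$ implies $\rho_n\circ\theta_x^{-1}=(\nu\circ\theta_x^{-1})\gamma_{\Delta_n-x}$, and whenever $\Delta_m\subset\Delta_n-x$, consistency of $\gamma$ ensures $(\rho_n\circ\theta_x^{-1})_{\Delta_m}\in\mathcal B_{\Delta_m}(\gamma)$. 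Form the shift-averages $\bar\rho_n:=|\Delta_n|^{-1}\sum_{x\in\Delta_n}\rho_n\circ\theta_x^{-1}$. Convexity of relative entropy in its first argument, combined with the finite value of $\operatorname{Diam}^\infty\mathcal B_{\Delta_m}(\gamma)$ on the bulk shifts and a vanishing boundary fraction $O(m/n)$, bounds $|\Delta_m|^{-1}\mathcal H_{\Delta_m}(\bar\rho_n|\nu'\gamma_{\Delta_m})$ by $o(1)$ as $n\to\infty$ for each fixed $m$. Applying the precompactness step to $(\bar\rho_n)$ extracts a subsequence $\bar\rho_{n_k}\to\mu$ in the $\mathcal L$-topology. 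Since $\bar\rho_n-\bar\rho_n\circ\theta_y^{-1}\to 0$ locally for every fixed shift $\theta_y$, the limit $\mu$ is shift-invariant. The LSC applied to each summand in the supremum expression for $h(\mu|\gamma)$, combined with the above bound and the weak-dependence estimate on $\operatorname{Diam}^\infty\mathcal B_{\Delta_m}(\gamma)$, forces $h(\mu|\gamma)=0$, so $\mu\in h_0(\gamma)$.

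\textbf{Main obstacle.} The delicate step is the nonemptiness argument: the shift-averaging window must be large enough to produce a shift-invariant limit but small enough that the averaging does not inflate the per-volume relative entropy. The reconciliation uses the quantitative weak-dependence bound $\operatorname{Diam}^\infty\mathcal B_{\Delta_n}(\gamma)=o(|\Delta_n|)$ together with convexity of relative entropy; controlling the contribution of boundary shifts $x$ with $\Delta_m\not\subset\Delta_n-x$ is the main technical wrinkle, handled by the fact that such shifts form a vanishing fraction of $\Delta_n$ and that $\operatorname{Diam}^\infty\mathcal B_{\Delta_m}(\gamma)$ is finite for each fixed $m$.
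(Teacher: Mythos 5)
Your first two steps — lower semicontinuity (which is the same as saying the sublevel sets $\mathcal P_{n,C}$ are strongly closed), precompactness via uniform integrability of entropy-bounded families, the diagonal extraction, and the Kolmogorov extension — reproduce the paper's compactness argument, just phrased in the language of LSC rather than directly through the compact sets $\mathcal P_{n,C}$. That part is fine.

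The nonemptiness step has a genuine gap. You average $\rho_n\circ\theta_x^{-1}$ over \emph{all} $x\in\Delta_n$, which means that for a fraction $O(m/n)$ of the shifts (the ``boundary'' shifts with $\Delta_m\not\subset\Delta_n-x$) the marginal $(\rho_n\circ\theta_x^{-1})_{\Delta_m}$ need not lie in $\mathcal C(\mathcal B_{\Delta_m}(\gamma))$, and in fact $\mathcal H_{\Delta_m}(\rho_n\circ\theta_x^{-1}|\nu'\gamma_{\Delta_m})$ can be $+\infty$: those marginals involve some coordinates drawn directly from $\nu$, which for a general standard Borel state space may be singular with respect to $\nu'\hat\gamma_{\Delta_m}$ (for instance $\nu=\delta_\omega$ with an absolutely continuous reference). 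Convexity of $\mathcal H(\cdot|\lambda)$ then gives $\mathcal H_{\Delta_m}(\bar\rho_n|\nu'\gamma_{\Delta_m})\leq\sum_x|\Delta_n|^{-1}\mathcal H_{\Delta_m}(\rho_n\circ\theta_x^{-1}|\nu'\gamma_{\Delta_m})$, and a vanishing \emph{fraction} of the weight cannot discount summands that are $+\infty$ — the bound is simply $+\infty$. So the claimed estimate does not follow. The paper sidesteps this entirely by decoupling the averaging window from the specification window: it averages $\theta_x\nu\gamma_{\Delta_{2m}}$ over $x\in\Delta_m$ only, so that $\Delta_m\subset\Delta_{2m}+x$ for \emph{every} $x$ in the sum, giving $\mu^m\gamma_{\Delta_n}=\mu^m$ for all $n\leq m$ and placing every marginal squarely in $\mathcal B_{\Delta_n}(\gamma)$, with no boundary terms at all. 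Your argument would be repaired by the same device: average over $x$ in a strictly smaller box (say $\Delta_{\lfloor n/2\rfloor}$) so that for each fixed $m$ and all large $n$ every summand is a bulk term. (A secondary, minor imprecision: you claim the averaged per-volume entropy is $o(1)$, but the bound convexity actually delivers for bulk terms is $\mathcal H_{\Delta_m}(\bar\rho_n|\nu'\gamma_{\Delta_m})\leq\operatorname{Diam}^\infty\mathcal B_{\Delta_m}(\gamma)$, a constant for fixed $m$; combined with the supremum formula~(\ref{lemma_sfe_ori_sup}) and LSC this forces $h(\mu|\gamma)\leq 0$, hence $=0$, which is what you want — but it is not an $o(1)$ statement.)
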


\begin{proof}
  The motivation for this lemma is standard; we include a proof for completeness.
  Fix a measure $\nu\in\mathcal P_\Theta(\Omega,\mathcal F)$ and a constant $C\in [0,\infty)$.
  Level sets of relative entropy are compact: in our setting
  \[
    \mathcal P_{n,C}:=\{
       \mu\in\mathcal P(E^{\Delta_n},\mathcal E^{\Delta_n}):
       \mathcal H(\mu|\nu\hat\gamma_{\Delta_n})\leq
       |\Delta_n|C+\operatorname{Diam}^\infty\mathcal B_{\Delta_n}(\gamma)
    \}
  \]
  is compact in the strong topology on $\mathcal P(E^{\Delta_n},\mathcal E^{\Delta_n})$ for any $n\in\mathbb N$.
  Equation~\ref{lemma_sfe_ori_sup} of Lemma~\ref{lemma_sfe}
  says that
  \[
  \{h(\cdot|\gamma)\leq C\}
  =
  \cap_{n\in\mathbb N}
  \{\mu\in\mathcal P_\Theta(\Omega,\mathcal F):\mu_{\Delta_n}\in\mathcal P_{n,C}\}.
  \]
  Let $(\mu^m)_{m\in\mathbb N}\subset\mathcal P(\Omega,\mathcal F)$
  denote a sequence of random fields --- not necessarily
  shift-invariant --- such that for any fixed $n\in\mathbb N$,
  we have
  $
    \sigma_{\Delta_n}(\mu^m)\in\mathcal P_{n,C}
  $
  for $m$ sufficiently large.
  By compactness of each set $\mathcal P_{n,C}$,
  a standard diagonalisation argument,
  and the Kolmogorov extension theorem,
  we obtain a subsequential limit $\mu\in\mathcal P(\Omega,\mathcal F)$
  of $(\mu^m)_{m\in\mathbb N}$ in the $\mathcal L$-topology
   with the property that $\mu_{\Delta_n}\in\mathcal P_{n,C}$
   for each $n\in\mathbb N$.

   For the lemma,
  it suffices to prove that $\{h(\cdot|\gamma)\leq C\}$ is compact and
  that $h_0(\gamma)$ is nonempty.
  Start with the former.
  Suppose that $(\mu^m)_{m\in\mathbb N}\subset \{h(\cdot|\gamma)\leq C\}$.
  Then $\sigma_{\Delta_n}(\mu^m)\in\mathcal P_{n,C}$
  for any $n,m\in\mathbb N$.
  Apply the previous argument to obtain a subsequential limit $\mu\in\mathcal P(\Omega,\mathcal F)$.
  Then $\mu$ must be shift-invariant because each $\mu^m$ is shift-invariant.
  The argument says moreover that
  $\mu_{\Delta_n}\in\mathcal P_{n,C}$
  for each $n\in\mathbb N$,
  that is,
  $h(\mu|\gamma)\leq C$.
  This proves that the level set $\{h(\cdot|\gamma)\leq C\}$ is compact.
  Finally, we prove that $h_0(\gamma)$ is nonempty.
  Set $C$ to $0$, and
  define
  \[
    \mu^m:=\frac{1}{|\Delta_m|}\sum_{x\in \Delta_m} \nu\gamma_{\Delta_{2m}+x}
    =
    \frac{1}{|\Delta_m|}\sum_{x\in \Delta_m}\theta_{x}\nu\gamma_{\Delta_{2m}}
    .
  \]
  The two measures are equal because $\nu$ is shift-invariant,
  and it is clear that any subsequential limit of $(\mu^m)_{m\in\mathbb N}$
  is also shift-invariant.
  Moreover, $\mu^m\gamma_{\Delta_n}=\mu^m$ whenever $m\geq n$
 because $\Delta_n\subset\Delta_m\subset \Delta_{2m}+x$
  for any $x\in\Delta_m$.
  This means that
  $\sigma_{\Delta_n}(\mu^m)\in \mathcal P_{n,0}$
   for $m$ sufficiently large, for each fixed $n\in\mathbb N$.
   The sequence thus has a subsequential limit $\mu$ in the $\mathcal L$-topology.
   This limit $\mu$ must satisfy $\mu_{\Delta_n}\in\mathcal P_{n,0}$
   for any $n$.
   Conclude that $\mu\in h_0(\gamma)$, that is, $h_0(\gamma)$ is nonempty.
\end{proof}

%!TEX root=../main.tex

\section{Minimizers of the specific free energy} \label{s_minimizers}

\subsection{Mazur's lemma}

\begin{lemma}
  \label{lemma_mazur}
  Let $(X,\mathcal X)$ denote a standard Borel space and
  $\mathcal B$ a convex subset of $\mathcal P(X,\mathcal X)$
  subject to $\operatorname{Diam}^\infty\mathcal B$ being finite.
  Then the set
  \[
  % \numberthis
  % \label{lemma_mazur_definition_C}
  \mathcal C:=
  \mathcal C(\mathcal B):=
  \left\{\mu\in\mathcal P(X,\mathcal X):\inf\nolimits_{\nu\in\mathcal B}\mathcal H(\mu|\nu)=0\right\}
  \]
  is compact in the strong topology on $\mathcal P(X,\mathcal X)$, satisfies $\operatorname{Diam}^\infty\mathcal C=\operatorname{Diam}^\infty\mathcal B$,
  and equals
  \begin{enumerate}
    \item The closure of $\mathcal B$ in the total variation topology,
    \item The closure of $\mathcal B$ in the strong topology.
  \end{enumerate}
\end{lemma}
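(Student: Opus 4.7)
The plan is to work throughout with densities against a fixed reference measure. Let $D := \operatorname{Diam}^\infty \mathcal B$ (finite by hypothesis) and fix some $\nu_0 \in \mathcal B$. Finiteness of $D$ forces every $\nu \in \mathcal B$ to be absolutely continuous with respect to $\nu_0$ with density $d\nu/d\nu_0 \in [e^{-D}, e^D]$ $\nu_0$-a.e., so the map $\nu \mapsto d\nu/d\nu_0$ identifies $\mathcal B$ with a convex subset of the order interval $\{f \in L^\infty(\nu_0) : e^{-D} \leq f \leq e^D\} \subset L^1(\nu_0)$. The key topological observation I will use repeatedly is that on a set of probability measures with uniformly $L^\infty$-bounded densities, the strong (setwise) topology on measures agrees with the weak $L^1$ topology on densities: setwise means convergence against indicator functions, and the uniform bound on densities extends this to all bounded measurable test functions via uniform approximation by simple functions.

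I would prove the equality of the three descriptions of $\mathcal C$ by a short cycle of inclusions. The inclusion $\mathcal C \subseteq \mathcal C_{\mathrm{TV}}$ (the TV-closure) follows from Pinsker's inequality applied to any minimising sequence $\nu^n \in \mathcal B$ with $\mathcal H(\mu|\nu^n) \to 0$. The inclusion $\mathcal C_{\mathrm{TV}} \subseteq \mathcal C_{\mathrm{strong}}$ is trivial since TV is a finer topology. The reverse inclusion $\mathcal C_{\mathrm{strong}} \subseteq \mathcal C_{\mathrm{TV}}$ is where the lemma earns its name: since $\mathcal B$, viewed through densities, is a convex subset of the Banach space $L^1(\nu_0)$, Mazur's theorem guarantees that its weak and norm closures in $L^1$ coincide; the topological observation above translates this to equality of the strong and TV closures.

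To close the cycle with $\mathcal C_{\mathrm{TV}} \subseteq \mathcal C$, let $\nu^n \in \mathcal B$ converge to $\mu$ in TV. Then $f^n := d\nu^n/d\nu_0$ converges in $L^1(\nu_0)$ to some limit $f = d\mu/d\nu_0$, and the bounds $e^{-D} \leq f, f^n \leq e^D$ survive in the limit. Writing
\[
\mathcal H(\mu|\nu^n) = \int f \log f\, d\nu_0 - \int f \log f^n\, d\nu_0,
\]
and passing to a subsequence along which $f^n \to f$ pointwise $\nu_0$-a.e., dominated convergence (with dominating function the constant $D e^D$) shows the right-hand side tends to $0$; a subsequence-of-every-subsequence argument upgrades this to the full sequence, giving $\mu \in \mathcal C$.

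Finally, compactness follows from Banach--Alaoglu: the order interval $\{f : e^{-D} \leq f \leq e^D\}$ is bounded, closed and convex in $L^\infty(\nu_0) = L^1(\nu_0)^*$, hence weak-$*$ compact; since the weak-$*$ $L^\infty$ topology is finer than the weak $L^1$ (i.e. setwise) topology on this probability space, the interval is also setwise compact, and $\mathcal C$, being a setwise-closed subset, is compact. For the max-diameter, $\mathcal B \subseteq \mathcal C$ gives $\operatorname{Diam}^\infty \mathcal C \geq D$; conversely, for $\mu, \mu' \in \mathcal C$ with approximating sequences $\nu^n \to \mu$, ${\nu'}^n \to \mu'$ satisfying $\nu^n \leq e^D {\nu'}^n$, setwise passage to the limit preserves the inequality on every measurable set, giving $\mu \leq e^D \mu'$ and $\mathcal H^\infty(\mu|\mu') \leq D$. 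I expect the one subtle point to be the correct identification of topologies that makes Mazur's theorem applicable, since Mazur is a Banach-space statement while the paper's strong topology is defined directly on measures; everything else is bookkeeping once densities are in $[e^{-D},e^D]$.
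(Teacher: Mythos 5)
Your proof is correct and rests on the same central idea as the paper's: pass to densities against a fixed reference $\nu_0\in\mathcal B$, exploit the uniform bound $e^{-D}\leq d\nu/d\nu_0\leq e^D$ to identify the strong (setwise) topology on measures with the weak $L^1$ topology on densities, and invoke Mazur's lemma to equate the weak- and norm-closures of the convex density family, thereby collapsing the strong closure to the total-variation closure. The supporting steps, however, are implemented differently. For $\mathcal C\subseteq\mathcal C_{\mathrm{TV}}$ you use Pinsker's inequality, and for $\mathcal C_{\mathrm{TV}}\subseteq\mathcal C$ you use dominated convergence along an a.e.-convergent subsequence together with a subsequence-of-subsequences argument; the paper handles both directions at once through a chain of equivalences built around the convex function $\Xi(x)=1-x+x\log x$, writing $\mathcal H(\mu|\nu_n)=\nu_n(\Xi(f_\mu/f_{\nu_n}))$ and showing this tends to $0$ if and only if $f_{\nu_n}\to f_\mu$ in $L^1(\lambda)$, which is somewhat tighter since it treats the equivalence symmetrically. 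For compactness you go through Banach--Alaoglu on the order interval in $L^\infty(\nu_0)=L^1(\nu_0)^*$ and then transfer weak-$*$ compactness to the coarser setwise topology, whereas the paper simply places $\mathcal C$ inside the entropy sublevel set $\{\mu:\mathcal H(\mu|\lambda)\leq\operatorname{Diam}^\infty\mathcal C\}$ and cites the standard compactness of such sets; given the $L^\infty$ bounds your Banach--Alaoglu route is the more self-contained. For the max-diameter, the paper constructs the lattice infimum and supremum $f^\pm$ of the density family and reads off $\mathcal H^\infty(\lambda^+|\lambda^-)=D$, while you pass the pointwise inequality $\nu^n\leq e^D\nu'^n$ to the setwise limit — equivalent, but avoids constructing $f^\pm$. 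One point you should make explicit in a final write-up: the ``key topological observation'' must apply not just to $\mathcal B$ but to its strong closure, so you should record that any $\mu$ in the strong closure satisfies $e^{-D}\nu_0(A)\leq\mu(A)\leq e^D\nu_0(A)$ for all $A\in\mathcal X$ (by passing the inequality to the setwise limit), hence $\mu\ll\nu_0$ with density in $[e^{-D},e^D]$; this is what licenses identifying the topologies on the closure and not merely on $\mathcal B$ itself, and is precisely the role the paper's $f^\pm$ construction plays.
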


This lemma is close to trivial when $E$ is finite, which is the case for many, but certainly not all, interesting models.
It is this lemma that makes the theory work also for models where
$(E,\mathcal E)$ is a general standard Borel space.
The lemma comes down to a straightforward application of
Mazur's lemma: a well-known result from functional analysis.
Remark that there is a confusing difference in naming
conventions for topologies between functional analysis
and measure theory, when the topologies
are on sets of (probability) measures.

\begin{proof}[Proof of Lemma~\ref{lemma_mazur}]
  Fix a measure $\lambda\in\mathcal B$; this measure will serve as reference measure.
  Write $f_\mu:=d\mu/d\lambda$
  for any $\sigma$-finite measure $\mu$ on $(X,\mathcal X)$
  that is absolutely continuous with respect
  to $\lambda$.
  For example, if $\mu\in\mathcal B$,
  then $\lambda$-almost everywhere $|\log f_\mu|\leq\operatorname{Diam}^\infty\mathcal B$.
  In particular, the map $\mu\mapsto f_\mu$
  injects $\mathcal B$ into $L^1(\lambda)$ ---
  the image of $\mathcal B$ under this map is also convex.
  Write $f^-$ for the lattice infimum of the family $\{f_\mu:\mu\in\mathcal B\}$; this is the largest $\mathcal X$-measurable function such that $\lambda$-almost everywhere $f^-\leq f_\mu$
   for each $\mu\in\mathcal B$.
   See Lemma 2.6 in~\cite{HAJLASZ}
   for existence and uniqueness of $f^-\in L^1(\lambda)$.
   Similarly, write $f^+$ for the lattice supremum of $\{f_\mu:\mu\in\mathcal B\}$.
   Observe that $\lambda$-almost everywhere
   $f^-\leq 1\leq f^+$ and
   \[
    0\leq \log\frac{f^+}{f^-}\leq \operatorname{Diam}^\infty\mathcal B;
   \]
   the former because $\lambda\in\mathcal B$, the latter
   follows from the definition of the diameter.
   In particular,
   \[e^{-\operatorname{Diam}^\infty\mathcal B}\leq\essinf\nolimits_\lambda f^\pm\leq\esssup\nolimits_\lambda f^\pm \leq e^{\operatorname{Diam}^\infty\mathcal B}.\]
   Define the measures $\lambda^\pm:=f^\pm\lambda$ --- these should be considered the lattice infimum and supremum of the set $\mathcal B$,
   and are independent of the choice of reference measure $\lambda\in\mathcal B$.
   A measure $\mu\in\mathcal P(X,\mathcal X)$
   must satisfy $\lambda^-\leq \mu\leq \lambda^+$
   if either $\mu\in\mathcal C$,
   or if $\mu$ is in the closure of $\mathcal B$ in the total variation topology,
   or if $\mu$ is in the closure of $\mathcal B$ in the strong topology.
   This also implies that $\lambda$-almost everywhere $f^-\leq f_\mu\leq f^+$.

   We first show that $\operatorname{Diam}^\infty\mathcal C=\operatorname{Diam}^\infty\mathcal B$.
   The previous observation implies that
   \[
    \operatorname{Diam}^\infty\mathcal C\leq \operatorname{Diam}^\infty\{\mu\in\mathcal P(X,\mathcal X):\lambda^-\leq \mu\leq \lambda^+\}=\mathcal H^\infty(\lambda^+|\lambda^-)=\operatorname{Diam}^\infty\mathcal B.
   \]
   Now $\mathcal B\subset\mathcal C$ and therefore $\operatorname{Diam}^\infty\mathcal B\leq\operatorname{Diam}^\infty\mathcal C$: we conclude that $\operatorname{Diam}^\infty\mathcal C=\operatorname{Diam}^\infty\mathcal B$.

   Fix a probability measure $\mu$ subject to
   $\lambda^-\leq \mu\leq \lambda^+$;
   the goal is to show that $\mu\in\mathcal C$
   if and only if $\mu$ is contained in the closure of $\mathcal B$ in the total variation topology.
   Fix a sequence $(\nu_n)_{n\in\mathbb N}\subset \mathcal B$.
   Observe that $d\mu/d\nu_n=f_\mu/f_{\nu_n}$,
   and that
   \[\textstyle
    \mathcal H(\mu|\nu_n)=\nu_n\left(
      \frac{f_\mu}{f_{\nu_n}}\log \frac{f_\mu}{f_{\nu_n}}
    \right)
    =\nu_n\left(\Xi\left(\frac{f_\mu}{f_{\nu_n}}\right)\right),
   \]
   where $\Xi:(0,\infty)\to[0,\infty)$
   is defined by $\Xi(x):=1-x+x\log x$.
   The function $\Xi$ is convex and attains its minimum
   $0$ at $x=1$ only.
   We observe that, as $n\to\infty$,
   \begin{align*}
     \mathcal H(\mu|\nu_n)\to 0 &\iff \nu_n(\Xi(f_\mu/f_{\nu_n}))\to 0\\
     &\numberthis\label{lemma_mazur_align_second}\iff \lambda(\Xi(f_\mu/f_{\nu_n}))\to 0\\
     &\numberthis\label{lemma_mazur_align_third}\iff f_{\nu_n}\to f_\mu\qquad\text{in $L^1(\lambda)$}\\
     &\numberthis\label{lemma_mazur_align_fourth}\iff \nu_n\to\mu\qquad \text{in total variation.}
   \end{align*}
   The equivalence in (\ref{lemma_mazur_align_second}) is due to the fact that
   $e^{-\operatorname{Diam}^\infty\mathcal B}\lambda\leq \nu_n\leq e^{\operatorname{Diam}^\infty\mathcal B}\lambda$
   for each $n\in\mathbb N$,
   and nonnegativity of $\Xi$.
   Equivalence in (\ref{lemma_mazur_align_third}) is due to said properties of the function $\Xi$,
   and the fact that all functions $f_\mu$ and $f_{\nu_n}$ are uniformly bounded away from zero and
   infinity.
   Equivalence in (\ref{lemma_mazur_align_fourth})
   is straightforward as
   $\lambda(|f_{\nu_n}-f_\mu|)$ equals the total variation distance from $\nu_n$ to $\mu$.
   We have now proven that $\mathcal C$ equals the closure of $\mathcal B$ in the total variation topology.

   % Next, claim that $\operatorname{Diam}^\infty\mathcal C=\operatorname{Diam}^\infty\mathcal B$.
   % Clearly $\operatorname{Diam}^\infty\mathcal C\geq \operatorname{Diam}^\infty\mathcal B$
   % as $\mathcal C\supset\mathcal B$.
   % It was remarked that
   % $
   % \mathcal C\subset \{\mu\in\mathcal P(X,\mathcal X):\lambda^-\leq \mu\leq \lambda^+\},
   % $
   % and consequently $\operatorname{Diam}^\infty\mathcal C\leq \mathcal H^\infty(\lambda^+|\lambda^-)\leq \operatorname{Diam}^\infty\mathcal B$.

   Claim that the closure of $\mathcal B$ in the total variation topology equals
   the closure of $\mathcal B$ in the strong topology.
   The map $\mu\mapsto f_\mu$ is a bijection from
   the closure of $\mathcal B$ in the total variation topology to
   the closure of $\{f_\mu:\mu\in\mathcal B\}$ in the norm topology on $L^1(\lambda)$.
   The map $\mu\mapsto f_\mu$ is also a bijection from the closure of $\mathcal B$ in the strong topology
   to the closure of $\{f_\mu:\mu\in\mathcal B\}$ in the weak topology
   on $L^1(\lambda)$.
   The set $\{f_\mu:\mu\in\mathcal B\}$ is convex, and therefore
   Mazur's lemma asserts that the closure of $\{f_\mu:\mu\in\mathcal B\}$ in $L^1(\lambda)$
   is the same for the norm topology and for the weak topology.

   The set $\mathcal C$ is compact in the strong topology
   because it is closed in the strong topology
   and has finite max-diameter:
   it is a subset of the compact set $\{\mu\in\mathcal P(X,\mathcal X):\mathcal H(\mu|\lambda)\leq \operatorname{Diam}^\infty\mathcal C\}$.
\end{proof}

\begin{corollary}
  \label{corollary_replacing_symm_max_entr}
  Consider a weakly dependent specification $\gamma$,
  and a shift-invariant random field $\mu$.
  Then $\mu\in h_0(\gamma)$
  if and only if $\mu_{\Delta_n}\in\mathcal C(\mathcal B_{\Delta_n}(\gamma))$
  for each $n\in\mathbb N$.
\end{corollary}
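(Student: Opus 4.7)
The plan is to read off the corollary directly by matching two descriptions of the same quantity: the sup-characterisation of the SFE from Lemma~\ref{lemma_sfe}, and the infimum-characterisation of $\mathcal C(\mathcal B)$ from Lemma~\ref{lemma_mazur}.

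First I would invoke equation~(\ref{lemma_sfe_inf_sup}) of Lemma~\ref{lemma_sfe}, which says
\[
 h(\mu|\gamma)
 =\sup_{n\in\mathbb N}|\Delta_n|^{-1}
   \inf_{\rho\in\mathcal P(\Omega,\mathcal F)}
     \mathcal H_{\Delta_n}(\mu|\rho\gamma_{\Delta_n}).
\]
Since each term in the supremum is nonnegative, the sup vanishes if and only if every term vanishes. Hence $\mu\in h_0(\gamma)$ is equivalent to $\inf_{\rho}\mathcal H_{\Delta_n}(\mu|\rho\gamma_{\Delta_n})=0$ for every $n\in\mathbb N$.

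Next I would translate the infimum into one over $\mathcal B_{\Delta_n}(\gamma)$. For any $\rho\in\mathcal P(\Omega,\mathcal F)$ and any $n$, the measure $\rho\gamma_{\Delta_n}$ agrees on $\mathcal F_{\Delta_n}$ with $\rho\hat\gamma_{\Delta_n}$ under the canonical identification $\mathcal F_{\Delta_n}\leftrightarrow\mathcal E^{\Delta_n}$; therefore $\mathcal H_{\Delta_n}(\mu|\rho\gamma_{\Delta_n})=\mathcal H(\mu_{\Delta_n}|\rho\hat\gamma_{\Delta_n})$. As $\rho$ ranges over $\mathcal P(\Omega,\mathcal F)$, the measure $\rho\hat\gamma_{\Delta_n}$ ranges exactly over $\mathcal B_{\Delta_n}(\gamma)$ by definition, so
\[
 \inf_{\rho\in\mathcal P(\Omega,\mathcal F)}\mathcal H_{\Delta_n}(\mu|\rho\gamma_{\Delta_n})
 =\inf_{\nu\in\mathcal B_{\Delta_n}(\gamma)}\mathcal H(\mu_{\Delta_n}|\nu).
\]
Finally, the right-hand infimum equals $0$ if and only if $\mu_{\Delta_n}\in\mathcal C(\mathcal B_{\Delta_n}(\gamma))$, by the very definition of $\mathcal C(\mathcal B_{\Delta_n}(\gamma))$ given in Lemma~\ref{lemma_mazur} (for which the required finiteness of $\operatorname{Diam}^\infty\mathcal B_{\Delta_n}(\gamma)$ is built into the definition of weak dependence). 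Chaining these three equivalences across all $n$ delivers the corollary.

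There is essentially no obstacle: the work was already done in establishing Lemma~\ref{lemma_sfe} and Lemma~\ref{lemma_mazur}. The only small point that warrants care is the identification $\mathcal H_{\Delta_n}(\mu|\rho\gamma_{\Delta_n})=\mathcal H(\mu_{\Delta_n}|\rho\hat\gamma_{\Delta_n})$, but this is immediate from the bijection between $\mathcal P(\Omega,\mathcal F_{\Delta_n})$ and $\mathcal P(E^{\Delta_n},\mathcal E^{\Delta_n})$ fixed at the start of Section~\ref{s_definitions}.
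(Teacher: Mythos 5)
Your proposal is correct and follows exactly the same route as the paper, which simply cites equation~(\ref{lemma_sfe_inf_sup}) of Lemma~\ref{lemma_sfe} together with Lemma~\ref{lemma_mazur}; you have merely spelled out the intermediate steps (nonnegativity of each term in the supremum, the identification of $\mathcal H_{\Delta_n}(\mu|\rho\gamma_{\Delta_n})$ with $\mathcal H(\mu_{\Delta_n}|\rho\hat\gamma_{\Delta_n})$, and the definitional role of $\mathcal C(\cdot)$). Nothing is missing and nothing is extraneous.
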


\begin{proof}
  This is due to (\ref{lemma_sfe_inf_sup}) of Lemma~\ref{lemma_sfe}
  in combination with Lemma~\ref{lemma_mazur}.
\end{proof}

\subsection{Limits of finite-volume Gibbs measures}

\begin{lemma}
  \label{lemma_finite_volume_limits}
  If $\gamma$ is a weakly dependent specification,
  then $h_0(\gamma)=\mathcal W_\Theta(\gamma)$.
\end{lemma}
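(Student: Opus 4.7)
The plan is to prove both inclusions, using Corollary~\ref{corollary_replacing_symm_max_entr} as a bridge between the free-energy characterization of $h_0(\gamma)$ and the marginal-in-$\mathcal{C}(\mathcal{B}_{\Delta_n}(\gamma))$ characterization, together with Lemma~\ref{lemma_mazur} to switch freely between the strong and total variation closures. The key observation that makes both directions go through is that consistency of $\gamma$ lets one rewrite $\Delta_k$-marginals of $\nu\gamma_{\Delta_n}$ (for $k \leq n$) as elements of $\mathcal{B}_{\Delta_k}(\gamma)$: indeed, $\gamma_{\Delta_n} = \gamma_{\Delta_n}\gamma_{\Delta_k}$ implies
\[
\sigma_{\Delta_k}(\nu\gamma_{\Delta_n}) = (\nu\gamma_{\Delta_n})\hat\gamma_{\Delta_k} \in \mathcal{B}_{\Delta_k}(\gamma).
\]

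For the inclusion $\mathcal{W}_\Theta(\gamma)\subset h_0(\gamma)$, I would take $\mu\in\mathcal{W}_\Theta(\gamma)$ and a sequence $(\nu^n)$ with $\nu^n\gamma_{\Delta_n}\to\mu$ in the $\mathcal{L}$-topology. Fix $k\in\mathbb{N}$. By definition of the $\mathcal{L}$-topology, $\sigma_{\Delta_k}(\nu^n\gamma_{\Delta_n})\to\mu_{\Delta_k}$ in the strong topology on $\mathcal{P}(E^{\Delta_k},\mathcal{E}^{\Delta_k})$. By the consistency identity above, each term of this sequence (for $n\geq k$) lies in $\mathcal{B}_{\Delta_k}(\gamma)$, hence $\mu_{\Delta_k}\in\mathcal{C}(\mathcal{B}_{\Delta_k}(\gamma))$ by Lemma~\ref{lemma_mazur}. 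Since $k$ was arbitrary, Corollary~\ref{corollary_replacing_symm_max_entr} gives $\mu\in h_0(\gamma)$.

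For the reverse inclusion $h_0(\gamma)\subset\mathcal{W}_\Theta(\gamma)$, I would take $\mu\in h_0(\gamma)$, so $\mu_{\Delta_n}\in\mathcal{C}(\mathcal{B}_{\Delta_n}(\gamma))$ for every $n$ by Corollary~\ref{corollary_replacing_symm_max_entr}. Here I would invoke the key point of Lemma~\ref{lemma_mazur}: $\mathcal{C}(\mathcal{B}_{\Delta_n}(\gamma))$ equals the \emph{total variation} closure of $\mathcal{B}_{\Delta_n}(\gamma)$. Thus for each $n$ there exists $\nu^n\in\mathcal{P}(\Omega,\mathcal{F})$ with
\[
\|\nu^n\hat\gamma_{\Delta_n}-\mu_{\Delta_n}\|_{\mathrm{TV}}<1/n.
\]
For any $A\in\mathcal{F}_{\Delta_k}$ and any $n\geq k$, projecting from $E^{\Delta_n}$ to $E^{\Delta_k}$ does not increase total variation distance, so $|\nu^n\gamma_{\Delta_n}(A)-\mu(A)|<1/n$. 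This gives $\nu^n\gamma_{\Delta_n}\to\mu$ in the $\mathcal{L}$-topology, and since $\mu$ is shift-invariant by hypothesis, $\mu\in\mathcal{W}_\Theta(\gamma)$.

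There is essentially no hard step here once one has Corollary~\ref{corollary_replacing_symm_max_entr} and Lemma~\ref{lemma_mazur} in hand; the whole proof is bookkeeping with consistency and a choice of closure topology. The only subtle moment is in the second direction, where one really needs the total variation (rather than merely strong) closure in order to extract a single sequence $(\nu^n)$ approximating $\mu$ simultaneously on all finite boxes, bypassing the non-metrizability of the strong topology.
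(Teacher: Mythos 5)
Your proof is correct and follows essentially the same route as the paper's: both directions hinge on Corollary~\ref{corollary_replacing_symm_max_entr} to translate membership in $h_0(\gamma)$ into the marginal condition $\mu_{\Delta_n}\in\mathcal C(\mathcal B_{\Delta_n}(\gamma))$, and on Lemma~\ref{lemma_mazur}'s identification of the strong and total-variation closures to move between them — in particular, the paper's proof of $h_0(\gamma)\subset\mathcal W_\Theta(\gamma)$ also picks $\nu^n$ with $d(\mu_{\Delta_n},\nu^n\hat\gamma_{\Delta_n})\leq 1/n$ and uses contraction of total-variation distance under projections, exactly as you do. Your spelling out of the consistency identity $\sigma_{\Delta_k}(\nu^n\gamma_{\Delta_n})=(\nu^n\gamma_{\Delta_n})\hat\gamma_{\Delta_k}\in\mathcal B_{\Delta_k}(\gamma)$ is a minor elaboration of what the paper dispatches as "by definition of $\mathcal W(\gamma)$", but it is the same idea.
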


\begin{proof}
  If $\mu\in\mathcal W_\Theta(\gamma)$,
  then $\mu_{\Delta_n}\in\mathcal C(\mathcal B_{\Delta_n}(\gamma))$
  by definition of $\mathcal W(\gamma)$,
  and therefore $\mu\in h_0(\gamma)$
  by Corollary~\ref{corollary_replacing_symm_max_entr}.
  Now consider $\mu\in h_0(\gamma)$.
  For the lemma, it suffices to prove that
  $\mu\in\mathcal W_\Theta(\gamma)$.
  Again, Corollary~\ref{corollary_replacing_symm_max_entr}
  says that
  $
  \mu_{\Delta_n}\in\mathcal C(\mathcal B_{\Delta_n}(\gamma))
  $
  for each $n\in\mathbb N$.
   Write $d(\cdot,\cdot)$ for total variation distance.
   Lemma~\ref{lemma_mazur} implies that
   there exists a sequence of measures
   $(\nu^n)_{n\in\mathbb N}\subset\mathcal P(\Omega,\mathcal F)$
   such that
   \[d(\mu_{\Delta_n},\nu^n\hat\gamma_{\Delta_n})\leq 1/n\]
   for each $n\in\mathbb N$.
   % Assume that $\nu^n=\nu^n\gamma_{\Delta_n}$ for each $n$ without loss of generality;
   % replacing $\nu^n$ with $\nu^n\gamma_{\Delta_n}$
   % does clearly not change the validity of the inequality in the display.
   Now for any $m\geq n$, we observe that
   \[
    d(\mu_{\Delta_n},\sigma_{\Delta_n}(\nu^m\gamma_{\Delta_m}))
    \leq
    d(\mu_{\Delta_m},\sigma_{\Delta_m}(\nu^m\gamma_{\Delta_m}))
    =
    d(\mu_{\Delta_m},\nu^m\hat\gamma_{\Delta_m})
    \leq1/m.
   \]
   In particular, $\sigma_{\Delta_n}(\nu^m\gamma_{\Delta_m})$
   approaches $\mu_{\Delta_n}$ in the total variation
   topology as $m\to\infty$, and therefore also in the strong topology.
   Conclude that $\nu^m\gamma_{\Delta_m}\to\mu$ in
   the topology of local convergence as $m\to\infty$.
   In other words, $\mu\in\mathcal W_\Theta(\gamma)$.
\end{proof}

\subsection{Regular conditional probability distributions}

Recall that $\mu_\Lambda^\omega$
denotes the r.c.p.d.\ on $(E^\Lambda,\mathcal E^\Lambda)$ of $\mu$
corresponding to the projection map $\sigma_{S\setminus\Lambda}:\Omega\to E^{S\setminus\Lambda}$,
where $\mu\in\mathcal P(\Omega,\mathcal F)$ is an arbitrary random field,
and $\Lambda\in\mathcal S$.
Recall also that we use the notation $\mathcal B_{\Lambda,\omega}(\gamma)$
for the set
\[
\mathcal B_{\Lambda,\omega}(\gamma)
:=
\cap_{\Delta\in\mathcal S}\mathcal C(\mathcal B_{\Lambda,\Delta,\omega}(\gamma))
=
\cap_{n\in\mathbb N}
\mathcal C(\mathcal B_{\Lambda,\Delta_n,\omega}(\gamma)).
\]

\begin{lemma}
  \label{lemma_rcpd}
  Let $\gamma$ be a weakly dependent specification,
  and fix a minimizer  $\mu\in h_0(\gamma)$
  and a finite set $\Lambda\in\mathcal S$.
  Then
  the r.c.p.d.\ of $\mu$ satisfies
  $\mu_\Lambda^\omega\in\mathcal B_{\Lambda,\omega}(\gamma)$
  for $\mu$-almost every $\omega$.
\end{lemma}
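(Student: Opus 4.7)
The plan is to combine the supremum characterization of the SFE in Lemma~\ref{lemma_sfe} with a martingale limit argument. Since $\mathcal B_{\Lambda, \omega}(\gamma) = \bigcap_{n \in \mathbb N} \mathcal C(\mathcal B_{\Lambda, \Delta_n, \omega}(\gamma))$ is a countable intersection, it will suffice, for each fixed $n \in \mathbb N$ with $\Lambda \subset \Delta_n$, to establish $\mu_\Lambda^\omega \in \mathcal C(\mathcal B_{\Lambda, \Delta_n, \omega}(\gamma))$ for $\mu$-almost every $\omega$, and then to intersect the conull sets over $n$.

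For fixed $n$ and $m \geq n$, I would write $\alpha_\eta^m$ for the r.c.p.d.\ of $\mu$ on $(E^\Lambda, \mathcal E^\Lambda)$ given $\sigma_{\Delta_m \setminus \Lambda} = \eta$, and $\beta^\eta_\rho$ for the corresponding r.c.p.d.\ of $\rho\gamma_{\Delta_m}$ for $\rho \in \mathcal P(\Omega, \mathcal F)$. The standard chain rule for relative entropy on $\mathcal F_{\Delta_m} = \mathcal F_\Lambda \vee \mathcal F_{\Delta_m \setminus \Lambda}$ then yields
\[
\mathcal H_{\Delta_m}(\mu | \rho\gamma_{\Delta_m}) \geq \int \mathcal H(\alpha_\eta^m | \beta^\eta_\rho)\, d\mu_{\Delta_m \setminus \Lambda}(\eta).
\]
Consistency ($\rho\gamma_{\Delta_m} = \rho\gamma_{\Delta_m}\gamma_\Lambda$) identifies $\beta^\eta_\rho$ as $\rho'\hat\gamma_\Lambda$ for some random field $\rho'$ supported on configurations equal to $\eta$ on $\Delta_m \setminus \Lambda$; since $\Delta_n \setminus \Lambda \subset \Delta_m \setminus \Lambda$, and since $\mathcal B_{\Lambda,\Delta_n,\omega}(\gamma)$ depends on $\omega$ only through $\omega_{\Delta_n \setminus \Lambda}$ (the $\Lambda$-marginal of $\mu'$ does not affect $\mu'\hat\gamma_\Lambda$), I obtain $\beta^\eta_\rho \in \mathcal B_{\Lambda, \Delta_n, \omega}(\gamma)$ for any $\omega$ extending $\eta$. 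Now Lemma~\ref{lemma_sfe}~(\ref{lemma_sfe_inf_sup}) together with $h(\mu|\gamma) = 0$ and non-negativity of entropy force $\inf_\rho \mathcal H_{\Delta_m}(\mu|\rho\gamma_{\Delta_m}) = 0$ for each $m$, so Fatou's lemma applied along a minimizing sequence $(\rho_k)$ gives $\liminf_k \mathcal H(\alpha_\eta^m | \beta^\eta_{\rho_k}) = 0$ for $\mu_{\Delta_m \setminus \Lambda}$-almost every $\eta$. Hence $\inf_{\nu \in \mathcal B_{\Lambda, \Delta_n, \omega}(\gamma)} \mathcal H(\alpha_\eta^m | \nu) = 0$, and Lemma~\ref{lemma_mazur} yields $\alpha^m_{\omega_{\Delta_m \setminus \Lambda}} \in \mathcal C(\mathcal B_{\Lambda, \Delta_n, \omega}(\gamma))$ for $\mu$-almost every $\omega$.

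To finish, I would let $m \to \infty$: since $(\mathcal F_{\Delta_m \setminus \Lambda})_{m \geq n}$ is an increasing filtration with $\bigvee_m \mathcal F_{\Delta_m \setminus \Lambda} = \mathcal F_{S \setminus \Lambda}$, the standard martingale convergence theorem for regular conditional distributions on a standard Borel space gives $\alpha^m_{\omega_{\Delta_m \setminus \Lambda}} \to \mu_\Lambda^\omega$ in total variation for $\mu$-almost every $\omega$, and since $\mathcal C(\mathcal B_{\Lambda, \Delta_n, \omega}(\gamma))$ is closed in total variation by Lemma~\ref{lemma_mazur}, the limit belongs to the same set. Intersecting the conull sets over $n$ then yields the lemma. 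The main obstacle will be this last step: upgrading Lévy's per-set pointwise convergence $\alpha^m_{\omega_{\Delta_m \setminus \Lambda}}(A) \to \mu_\Lambda^\omega(A)$ (which is $\mu$-a.s.\ for each fixed $A$) into total-variation convergence of the conditional measures, which requires exploiting the standard-Borel structure that also underlies the existence of the r.c.p.d.s.
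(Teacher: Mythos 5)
Your proof follows the same backbone as the paper's: use the supremum characterization (\ref{lemma_sfe_inf_sup}), chain-rule the relative entropy over $\mathcal F_{\Delta_m\setminus\Lambda}$ and $\mathcal F_\Lambda$, use consistency to place the conditional finite-volume Gibbs measures in $\mathcal B_{\Lambda,\Delta_n,\omega}(\gamma)$, deduce via Lemma~\ref{lemma_mazur} that the finite-box r.c.p.d.\ lies in $\mathcal C(\mathcal B_{\Lambda,\Delta_n,\omega}(\gamma))$ almost surely, and then pass $m\to\infty$. The one genuine difference is the final limiting step, and there your proposal has a real gap that you yourself flag.

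You invoke a ``standard martingale convergence theorem for regular conditional distributions'' giving almost-sure \emph{total variation} convergence $\mu_n^\omega\to\mu_\Lambda^\omega$. No such general theorem exists: L\'evy's theorem gives a.s.\ convergence of $\mu_n^\omega(A)$ for each fixed $A$, but uniform convergence over $A$ (i.e.\ total variation convergence) fails for general r.c.p.d.s along a refining filtration. The paper avoids this entirely: it only uses the per-set a.s.\ convergence on a countable generating algebra, together with compactness of $\mathcal C(\mathcal B_{\Lambda,\Delta,\omega}(\gamma))$ in the \emph{strong} topology (which is exactly what Lemma~\ref{lemma_mazur} delivers), and concludes that the limit point $\mu_\Lambda^\omega$ must lie in that compact set. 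Your route can in fact be rescued, but not by ``standard Borel structure'' alone: one must use the finite max-diameter of $\mathcal C(\mathcal B_{\Lambda,\Delta_n,\omega}(\gamma))$ to get a uniform $L^\infty(\lambda)$-bound on the densities $f^m:=d\mu_n^\omega/d\lambda$ for a fixed reference $\lambda$, observe that $(f^m)_m$ is a bounded $(\lambda\times\mu_{S\setminus\Lambda})$-martingale, so $f^m\to f^\infty$ a.s.\ jointly; then Fubini plus dominated convergence over $E^\Lambda$ yields a.s.\ $L^1(\lambda)$ (hence total variation) convergence. Since that ingredient is precisely the one your proposal leaves implicit, and the paper's strong-compactness argument sidesteps it cleanly, I would record this as a gap rather than a complete alternative proof. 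The earlier steps (chain rule, Fatou along a minimizing sequence, the observation that $\mathcal B_{\Lambda,\Delta,\omega}$ depends on $\omega$ only through $\omega_{\Delta\setminus\Lambda}$) are correct and essentially identical to the paper's.
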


\begin{proof}
  Fix an arbitrary set $\Delta\in\mathcal S$ that contains $\Lambda$.
  For the lemma it suffices to show that
  $
  \mu_\Lambda^\omega\in\mathcal C(\mathcal B_{\Lambda,\Delta,\omega}(\gamma))
  $
  for $\mu$-almost every $\omega$.
  Write $\mu_n^\omega$
  for the r.c.p.d.\ of $\mu$ on $(E^\Lambda,\mathcal E^\Lambda)$
  corresponding to the natural projection map
  $\sigma_{\Delta_n\setminus\Lambda}:\Omega\to E^{\Delta_n\setminus\Lambda}$;
  we are only interested in $n$ so large that $\Delta_n\supset\Delta$.
  For such $n$, we claim that
  \[
    \mu_n^\omega\in\mathcal C(\mathcal B_{\Lambda,\Delta,\omega}(\gamma))
  \]
  almost surely (by which we mean: for $\mu$-almost every $\omega$).
  Equation~\ref{lemma_sfe_inf_sup} of Lemma~\ref{lemma_sfe}
  implies that
  \[
    \inf_{\rho\in\mathcal B_{\Delta_n}(\gamma)}
    \mathcal H(\mu_{\Delta_n}|\rho)
    =
    0.
  \]
  This implies that
  \[
    \inf_{\rho\in\mathcal B_{\Delta_n}(\gamma)}
    \left(
      \mathcal H(\mu_{\Delta_n\setminus\Lambda}|\rho_{\Delta_n\setminus\Lambda})
      +
      \int_{E^{\Delta_n\setminus\Lambda}} \mathcal H(\mu_n^\omega|\rho^\omega)d\mu_{\Delta_n\setminus\Lambda}(\omega)
      \right)
      =
      0,
  \]
  where $\rho^\omega$ is the r.c.p.d.\ of $\rho$
  on $(E^\Lambda,\mathcal E^\Lambda)$
  corresponding to the projection map $E^{\Delta_n}\to E^{\Delta_n\setminus\Lambda}$.
  Remark that $\rho^\omega\in \mathcal B_{\Lambda,\Delta_n,\omega}(\gamma)$
  almost surely because $\rho\in\mathcal B_{\Delta_n}(\gamma)$ and by consistency of $\gamma$.
  This means that
  \[
    \inf_{\rho^\omega\in \mathcal B_{\Lambda,\Delta_n,\omega}(\gamma)}
    \mathcal H(\mu_n^\omega|\rho^\omega)=0,
  \]
  and therefore $\mu_n^\omega\in\mathcal C(\mathcal B_{\Lambda,\Delta_n,\omega}(\gamma))$,
  almost surely.
  But $\mathcal C(\mathcal B_{\Lambda,\Delta_n,\omega}(\gamma))\subset\mathcal C(\mathcal B_{\Lambda,\Delta,\omega}(\gamma))$
  because $\Delta\subset\Delta_n$,
  which proves the claim.

  For any $A\in\mathcal E^\Lambda$,
  the bounded martingale convergence theorem
  says that almost surely
  \[
    \mu_n^\omega(A)\to\mu_\Lambda^\omega(A).
  \]
  The set $\mathcal C(\mathcal B_{\Lambda,\Delta,\omega}(\gamma))$ is compact in the strong topology,
  hence
  $\mu_n^\omega\to\mu_\Lambda^\omega\in\mathcal C(\mathcal B_{\Lambda,\Delta,\omega}(\gamma))$
  almost surely.
\end{proof}

\begin{corollary}
  \label{corollary_second_half}
  If $\gamma$ is a weakly dependent specification and $\mu\in h_0(\gamma)$ satisfies $\mu(\Omega_\gamma)=1$,
  then $\mu$ is almost Gibbs.
\end{corollary}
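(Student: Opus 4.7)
The plan is to combine Lemma~\ref{lemma_rcpd} with the hypothesis $\mu(\Omega_\gamma)=1$ and the very definition of $\Omega_\gamma$. Almost Gibbsness amounts to two things: that $\mu\in\mathcal G(\gamma)$, i.e.\ that $\mu$ satisfies the DLR equation $\mu=\mu\gamma_\Lambda$ for every $\Lambda\in\mathcal S$, and that $\mu$ charges $\Omega_\gamma$ with full measure. The second condition is assumed, so only the DLR equation needs proof.

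First I would rephrase the DLR equation in terms of r.c.p.d.'s: it suffices to show that for each fixed $\Lambda\in\mathcal S$, we have $\mu_\Lambda^\omega=\hat\gamma_\Lambda(\cdot,\omega)$ for $\mu$-almost every $\omega$. Fix such a $\Lambda$. By Lemma~\ref{lemma_rcpd}, applied to the minimizer $\mu\in h_0(\gamma)$, there is a $\mu$-null set $N_1\subset\Omega$ such that $\mu_\Lambda^\omega\in\mathcal B_{\Lambda,\omega}(\gamma)$ for every $\omega\notin N_1$. By hypothesis the set $N_2:=\Omega\setminus\Omega_\gamma$ is also $\mu$-null. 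Then for every $\omega\notin N_1\cup N_2$ the definition of $\Omega_\gamma$ gives $\mathcal B_{\Lambda,\omega}(\gamma)=\{\hat\gamma_\Lambda(\cdot,\omega)\}$, and hence $\mu_\Lambda^\omega=\hat\gamma_\Lambda(\cdot,\omega)$, as required.

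Since this holds for every $\Lambda\in\mathcal S$, we obtain $\mu\in\mathcal G(\gamma)$, and together with $\mu(\Omega_\gamma)=1$ this is exactly the definition of an almost Gibbs measure. There is no real obstacle here: all the work was already done in Lemma~\ref{lemma_rcpd}, whose r.c.p.d.\ statement was tailored to make this corollary a one-line deduction once continuity of the specification is assumed $\mu$-almost surely.
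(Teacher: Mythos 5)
Your proof is correct and follows exactly the paper's own argument: apply Lemma~\ref{lemma_rcpd}, then observe that the hypothesis $\mu(\Omega_\gamma)=1$ collapses $\mathcal B_{\Lambda,\omega}(\gamma)$ to the singleton $\{\hat\gamma_\Lambda(\cdot,\omega)\}$ almost surely, yielding the DLR equation. You simply spell out the bookkeeping with the null sets $N_1,N_2$ more explicitly than the paper does.
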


\begin{proof}
  By the previous lemma,
  $\mu_\Lambda^\omega\in \mathcal B_{\Lambda,\omega}(\gamma)=\{\hat\gamma_\Lambda(\cdot,\omega)\}$
  for $\mu$-a.e.\ $\omega$, proving that $\mu$ is a DLR state.
\end{proof}

% Lemma~\ref{lemma_rcpd} also implies finite energy.

\begin{corollary}
  \label{lemma_applications_finite_energy}
  Let $\gamma$ denote a weakly dependent specification,
  and fix a measure $\lambda\in\mathcal B_{\{0\}}(\gamma)$.
  We pretend that $\lambda$ is a probability measure
  on the state space $(E,\mathcal E)$.
  Then there exists a constant $\varepsilon>0$
  such that,
  for any minimizer $\mu\in h_0(\gamma)$ and for any $\Lambda\in\mathcal S$,
  we have
  $
    \mu_{\Lambda}^\omega\geq(\varepsilon\lambda)^{\Lambda}
  $
  for $\mu$-almost every $\omega$.
  In other words, $\mu$ has finite energy.
\end{corollary}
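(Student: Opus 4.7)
The plan is to set $\varepsilon := e^{-D}$ with $D := \operatorname{Diam}^\infty \mathcal{B}_{\{0\}}(\gamma)$, which is finite by the technical clause in the definition of weak dependence, and to prove the bound $\mu_\Lambda^\omega \geq (\varepsilon\lambda)^\Lambda$ by induction on $|\Lambda|$. For the base case $\Lambda = \{x\}$, Lemma~\ref{lemma_rcpd} gives $\mu_{\{x\}}^\omega \in \mathcal{B}_{\{x\},\omega}(\gamma) \subset \mathcal{C}(\mathcal{B}_{\{x\}}(\gamma))$ for $\mu$-almost every $\omega$. Under the canonical identification $E^{\{x\}} = E$ and by shift invariance of $\gamma$, the sets $\mathcal{B}_{\{x\}}(\gamma)$ and $\mathcal{B}_{\{0\}}(\gamma)$ coincide, so $\lambda$ belongs to $\mathcal{C}(\mathcal{B}_{\{x\}}(\gamma))$ as well. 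Lemma~\ref{lemma_mazur} asserts that this closed set has max-diameter equal to $D$, which forces $\mu_{\{x\}}^\omega \geq e^{-D}\lambda = \varepsilon\lambda$.

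For the induction step, fix $\Lambda$ with $|\Lambda| \geq 2$, pick $x_1 \in \Lambda$, and set $\Lambda' := \Lambda \setminus \{x_1\}$. The tower property for regular conditional probability distributions yields, for $\mu$-almost every $\omega$, a decomposition of the form
\[
\mu_\Lambda^\omega(d\eta_\Lambda) \;=\; \rho(d\eta_{x_1}) \otimes \mu_{\Lambda'}^{\omega_{S\setminus\{x_1\}}\eta_{x_1}}(d\eta_{\Lambda'}),
\]
where $\rho$ is the $x_1$-marginal of $\mu_\Lambda^\omega$ and in the second factor the $\Lambda'$-r.c.p.d.\ of $\mu$ is evaluated at the configuration agreeing with $\omega$ off $\{x_1\}$ and with $\eta_{x_1}$ on $\{x_1\}$. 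Another application of the tower property gives $\rho = \mathbb{E}[\mu_{\{x_1\}}^{\cdot} \mid \mathcal{F}_{S \setminus \Lambda}]$, so the base case and positivity of the conditional expectation yield $\rho \geq \varepsilon\lambda$. The induction hypothesis is the assertion that the event $B := \{\omega : \mu_{\Lambda'}^\omega \geq (\varepsilon\lambda)^{\Lambda'}\}$ has $\mu(B)=1$; since $B \in \mathcal{F}_{S\setminus\Lambda'} = \mathcal{F}_{S\setminus\Lambda} \vee \sigma(\sigma_{x_1})$, a disintegration argument shows that for $\mu$-a.e.\ $\omega$ the modified configuration lies in $B$ for $\rho$-a.e.\ $\eta_{x_1}$. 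Substituting both bounds into the decomposition yields $\mu_\Lambda^\omega \geq \varepsilon\lambda \otimes (\varepsilon\lambda)^{\Lambda'} = (\varepsilon\lambda)^\Lambda$, closing the induction.

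The main technical subtlety is the bookkeeping of nested almost-sure statements in the induction step: the inductive bound must be evaluated at the modified configuration $\omega_{S\setminus\{x_1\}}\eta_{x_1}$ rather than at the original $\omega$, and we need this to hold for $\rho$-a.e.\ choice of $\eta_{x_1}$ under $\mu$-a.e.\ conditioning on $\mathcal{F}_{S\setminus\Lambda}$. This is handled by the observation that the good event $B$ is measurable with respect to $\mathcal{F}_{S\setminus\Lambda}\vee\sigma(\sigma_{x_1})$, which is precisely the $\sigma$-algebra governing the extra randomness in the inserted coordinate, so a standard conditional-probability argument transfers $\mu(B)=1$ to the desired nested statement.
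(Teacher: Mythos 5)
Your proof is correct, but it takes a genuinely different route from the paper. The paper first proves the deterministic statement that every measure in $\mathcal B_\Lambda(\gamma)$ (and hence in $\mathcal C(\mathcal B_\Lambda(\gamma))$) dominates $(\varepsilon\lambda)^\Lambda$: writing $\mu=\nu\hat\gamma_\Lambda$ with $\nu=\nu\gamma_\Lambda$, consistency gives $\nu=\nu\gamma_{\{x\}}$ for \emph{each} single site $x\in\Lambda$, and iterating the single-site bound $\gamma_{\{x\}}(\cdot,\omega)\geq\varepsilon\lambda\times\delta_{\omega_{S\setminus\{x\}}}$ one site at a time yields $\nu\geq(\varepsilon\lambda)^\Lambda\times\nu_{S\setminus\Lambda}$. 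The induction thus lives entirely at the level of measures, with no conditioning, and Lemma~\ref{lemma_rcpd} is invoked exactly once at the very end to transfer the bound to $\mu_\Lambda^\omega$. Your argument instead carries the induction out directly on the regular conditional probability distributions $\mu_\Lambda^\omega$, applying Lemma~\ref{lemma_rcpd} already in the base case and then peeling off one coordinate at a time via the tower property. That works — the key points you correctly identify are that $\rho\geq\varepsilon\lambda$ lets you replace a $\rho$-a.e.\ statement by a $\lambda$-a.e.\ one, and that the good event $B$ is $\mathcal F_{S\setminus\Lambda'}$-measurable so the nested a.e.\ statements combine — but this is precisely the bookkeeping the paper's route avoids: by pushing the induction to the finite-volume-Gibbs-measure level, where everything is deterministic, the paper never has to disintegrate a conditional law inside the induction. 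What your version buys is a slightly more direct, ``hands-on'' derivation of the finite-energy property for the r.c.p.d.\ itself; what the paper's buys is cleaner logic and a reusable intermediate claim about $\mathcal B_\Lambda(\gamma)$.
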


In particular, if $E$ is finite
and every state $e\in E$ has positive probability with respect to $\lambda$,
then one may replace $\lambda$ by the counting measure on $E$,
which possibly has the effect of forcing us to take $\varepsilon$ smaller.
By doing so, we obtain the original finite energy formulation
of Burton and Keane~\cite{BURTON}.

\begin{proof}[Proof of Corollary~\ref{lemma_applications_finite_energy}]
Consider a weakly dependent specification $\gamma$,
and fix a probability measure $\lambda\in\mathcal B_{\{0\}}(\gamma)$.
The definition of a weakly dependent specification
and Lemma~\ref{lemma_mazur}
imply that
$\operatorname{Diam}^\infty\mathcal C(\mathcal B_{\{0\}}(\gamma))$
is finite,
and therefore there exists an $\varepsilon>0$
such that
$\mu\geq\varepsilon\lambda$
 for any $\mu\in\mathcal C(\mathcal B_{\{0\}}(\gamma))$.
 (In fact, it is easy to see that the choice $\varepsilon:=\exp-\operatorname{Diam}^\infty\mathcal C(\mathcal B_{\{0\}}(\gamma))$ suffices for this purpose.)

Claim that $\mu\geq (\varepsilon\lambda)^{\Lambda}$
for any $\mu\in\mathcal B_\Lambda(\gamma)$,
for fixed $\Lambda\in\mathcal S$.
Write $\mu=\nu\hat\gamma_\Lambda$
for some $\nu\in\mathcal P(\Omega,\mathcal F)$.
Without loss of generality, we suppose that $\nu=\nu\gamma_{\Lambda}$,
so that $\mu=\nu_\Lambda$.
We also have $\nu=\nu\prod_{x\in\Lambda}\gamma_{\{x\}}$.
By induction,
\[
\nu=\nu\prod\nolimits_{x\in\Lambda}\gamma_{\{x\}}\geq (\varepsilon\lambda)^\Lambda\times\nu_{S\setminus\Lambda}.
\]
This proves the claim.
The claim also proves that $\mu\geq (\varepsilon\lambda)^{\Lambda}$
for any $\mu\in\mathcal C(\mathcal B_\Lambda(\gamma))$,
which implies the corollary due to
Lemma~\ref{lemma_rcpd}.
\end{proof}

\subsection{Duality between random fields and specifications}

\begin{lemma}
  \label{lemma_duality_same_SFE}
  Let $\gamma$ denote a weakly dependent specification
  and $\nu$ a minimizer of $\gamma$.
  Then for any shift-invariant random field $\mu$, we have
  \[
    h(\mu|\gamma)
    =
    h(\mu|\nu)
    :=
    \lim_{n\to\infty}
    |\Delta_n|^{-1}
    \mathcal H_{\Delta_n}(\mu|\nu).
  \]
\end{lemma}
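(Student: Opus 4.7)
The plan is to reduce the limit with reference measure $\nu$ to the already known limit with reference measure $\nu\gamma_{\Delta_n}$ (from Lemma~\ref{lemma_sfe} applied with $\rho=\nu$), by bounding the difference $|\mathcal H_{\Delta_n}(\mu|\nu)-\mathcal H_{\Delta_n}(\mu|\nu\gamma_{\Delta_n})|$ uniformly in $o(|\Delta_n|)$. Once this is done, dividing by $|\Delta_n|$ and passing to the limit yields both existence and the equality with $h(\mu|\gamma)$ simultaneously.

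The key structural input is that $\nu$ being a minimizer forces both reference measures to live in the same small set. Concretely, Corollary~\ref{corollary_replacing_symm_max_entr} gives $\nu_{\Delta_n}\in \mathcal C(\mathcal B_{\Delta_n}(\gamma))$, while by definition $(\nu\gamma_{\Delta_n})_{\Delta_n}=\nu\hat\gamma_{\Delta_n}\in \mathcal B_{\Delta_n}(\gamma)\subset\mathcal C(\mathcal B_{\Delta_n}(\gamma))$. Lemma~\ref{lemma_mazur} then says that $\operatorname{Diam}^\infty\mathcal C(\mathcal B_{\Delta_n}(\gamma))=\operatorname{Diam}^\infty\mathcal B_{\Delta_n}(\gamma)$, which by weak dependence is $o(|\Delta_n|)$.

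Now the identification $\mathcal H_{\Delta_n}(\mu|\nu)=\mathcal H(\mu_{\Delta_n}|\nu_{\Delta_n})$ and $\mathcal H_{\Delta_n}(\mu|\nu\gamma_{\Delta_n})=\mathcal H(\mu_{\Delta_n}|\nu\hat\gamma_{\Delta_n})$ allows Lemma~\ref{lemma_obvious_bound} (applied on $(E^{\Delta_n},\mathcal E^{\Delta_n})$ with $\mathcal B=\mathcal C(\mathcal B_{\Delta_n}(\gamma))$ and the probability measure $\mu_{\Delta_n}$) to yield
\[
  \bigl|\mathcal H_{\Delta_n}(\mu|\nu)-\mathcal H_{\Delta_n}(\mu|\nu\gamma_{\Delta_n})\bigr|\leq \operatorname{Diam}^\infty\mathcal C(\mathcal B_{\Delta_n}(\gamma))=o(|\Delta_n|).
\]
Dividing by $|\Delta_n|$, the right-hand side tends to zero, and since $|\Delta_n|^{-1}\mathcal H_{\Delta_n}(\mu|\nu\gamma_{\Delta_n})\to h(\mu|\gamma)$ by Lemma~\ref{lemma_sfe}, the sequence $|\Delta_n|^{-1}\mathcal H_{\Delta_n}(\mu|\nu)$ converges to the same limit $h(\mu|\gamma)$, as required.

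There is no real obstacle here; the only mild subtlety is to recognize that the convention $|\infty-\infty|=0$ in Lemma~\ref{lemma_obvious_bound} means that if $\mathcal H_{\Delta_n}(\mu|\nu\gamma_{\Delta_n})$ is infinite for some $n$ (which may happen a priori since $\mu$ is only assumed shift-invariant), the same is true of $\mathcal H_{\Delta_n}(\mu|\nu)$ and both limits are $+\infty$, so the equality still holds in $[0,\infty]$.
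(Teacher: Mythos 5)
Your proof is correct and takes essentially the same approach as the paper: both bound $|\mathcal H_{\Delta_n}(\mu|\nu)-\mathcal H_{\Delta_n}(\mu|\nu\gamma_{\Delta_n})|$ by $\operatorname{Diam}^\infty\mathcal C(\mathcal B_{\Delta_n}(\gamma))=o(|\Delta_n|)$, using Corollary~\ref{corollary_replacing_symm_max_entr} to place $\nu_{\Delta_n}$ in $\mathcal C(\mathcal B_{\Delta_n}(\gamma))$, Lemma~\ref{lemma_mazur} for the diameter identity, and Lemma~\ref{lemma_obvious_bound} for the comparison. The paper's proof is simply a more compressed version of the same argument.
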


\begin{proof}
  We observe that
  $
  |\mathcal H_{\Delta_n}(\mu|\nu)-\mathcal H_{\Delta_n}(\mu|\nu\gamma_{\Delta_n})|\leq \operatorname{Diam}^\infty
  \mathcal C(\mathcal B_{\Delta_n}(\gamma))=o(|\Delta_n|)
  $
  as $n\to\infty$.
\end{proof}

Let us now investigate the relation between $\mathbb S$
and $\mathbb F$.
Define the relation $\sim$
on $\mathbb F$ by declaring that $\mu\sim\nu$
whenever $\mu\in h_0(\nu)$.

\begin{lemma}
  The relation $\sim$ is an equivalence relation on $\mathbb F$
  with $h_0(\mu)$ the equivalence class of $\mu\in\mathbb F$.
\end{lemma}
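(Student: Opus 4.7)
The plan is to derive everything from Lemma~\ref{lemma_duality_same_SFE}: it collapses the distinction between a weakly dependent random field and a weakly dependent specification for which it is a minimizer. Once this is exploited, reflexivity, symmetry and transitivity all fall out of the same observation, and the identification of the equivalence class with $h_0(\mu)$ is immediate.

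First I would record the central consequence of Lemma~\ref{lemma_duality_same_SFE}: if $\mu \in \mathbb F$ and $\gamma$ is any weakly dependent specification with $\mu \in h_0(\gamma)$ (such $\gamma$ exists by the very definition of $\mathbb F$), then $h(\rho|\mu) = h(\rho|\gamma)$ for every shift-invariant random field $\rho$, hence
\[
h_0(\mu) = h_0(\gamma).
\]
In particular every element of $h_0(\mu)$ is itself a minimizer of $\gamma$, so it lies in $\mathbb F$; this shows $h_0(\mu) \subset \mathbb F$, which is what makes the statement of the lemma well-posed.

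Reflexivity is then trivial: $\mu \in h_0(\gamma) = h_0(\mu)$, so $\mu \sim \mu$. For symmetry, assume $\mu \sim \nu$, i.e.\ $\mu \in h_0(\nu)$. Applying the key observation to $\nu$ (with some weakly dependent $\gamma$ satisfying $\nu \in h_0(\gamma)$) gives $h_0(\nu) = h_0(\gamma)$, so $\mu \in h_0(\gamma)$, that is, $\mu$ is itself a minimizer of $\gamma$. Applying the observation now with $\mu$ in place of $\nu$ gives $h_0(\mu) = h_0(\gamma) \ni \nu$, so $\nu \in h_0(\mu)$ and therefore $\nu \sim \mu$. For transitivity, suppose $\mu \sim \nu$ and $\nu \sim \rho$. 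By the symmetry just proved, $\nu \sim \mu$, and repeating the argument above shows $h_0(\mu) = h_0(\nu)$; but $\rho \in h_0(\nu)$, so $\rho \in h_0(\mu)$, i.e.\ $\mu \sim \rho$.

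Finally, to identify the equivalence class of $\mu$ with $h_0(\mu)$, note that
\[
[\mu] = \{\nu \in \mathbb F : \mu \sim \nu\} = \{\nu \in \mathbb F : \nu \sim \mu\} = \{\nu \in \mathbb F : \nu \in h_0(\mu)\},
\]
where the middle equality is symmetry. Since $h_0(\mu) \subset \mathbb F$ by the initial observation, the right-hand side is simply $h_0(\mu)$. The only genuine conceptual point in the argument is verifying that $h_0(\mu) \subset \mathbb F$; everything else is a direct consequence of Lemma~\ref{lemma_duality_same_SFE}, and I do not expect any serious obstacle.
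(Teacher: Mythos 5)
Your proof is correct and takes essentially the same route as the paper: the engine in both cases is Lemma~\ref{lemma_duality_same_SFE} applied to a witness $\gamma\in\mathbb S$ with $\mu\in h_0(\gamma)$, yielding $h_0(\mu)=h_0(\gamma)$, from which everything follows. The paper compresses symmetry, transitivity, and the identification of the class into the single claim ``$h_0(\mu)=h_0(\nu)$ whenever $\mu\sim\nu$,'' while you spell these out one by one and also make explicit the (correct and worthwhile) observation that $h_0(\mu)\subset\mathbb F$, which the paper leaves implicit; this is a presentational rather than a substantive difference.
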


\begin{proof}
  Fix $\nu\in \mathbb F$.
  Clearly $\nu\sim\nu$, because $h(\nu|\nu)=0$.
  It suffices to show that $h_0(\mu)=h_0(\nu)$
  whenever $\mu\sim\nu$.
  Suppose that $\mu\sim\nu$.
  As $\nu\in\mathbb F$,
  there exists a specification $\gamma\in\mathbb S$
  such that $\nu\in h_0(\gamma)$.
  The previous lemma implies that
  $h_0(\nu)=h_0(\gamma)$,
  that is, $\mu\in h_0(\gamma)$,
  and therefore also $h_0(\mu)=h_0(\gamma)$.
  This proves that $h_0(\mu)=h_0(\nu)$.
\end{proof}

This is sufficient for the conclusions that were drawn in Subsection~\ref{subsection_main_results_abstract_rela}.

%!TEX root=../main.tex

\section{Applications} \label{s_applications}

Most of the classical results on the variational principle follows directly from our new setting. In this section we will give several examples of this fact.
We derive new results for the loop $O(n)$ model and for the Ising model in a random percolation environment,
which is also called the Griffiths singularity random field.

%!TEX root=../main.tex
\subsection{Models with an absolutely summable interaction potential}
\label{subsection_abs_summable}
In this subsection we show how to derive naturally from our work the variational principle for absolutely summable potential as described in~\cite{GEORGII} or~\cite{RASSOUL}.
The model of interest is described
by a reference measure and a shift-invariant absolutely summable potential.
Write $\lambda$ for the \emph{reference measure},
which is a probability measure on the state space $(E,\mathcal E)$.
This measure informs us of the most random distribution of the state of an isolated vertex in the absence of any interaction.
Write $\Phi=(\Phi_A)_{A\in\mathcal S}$
for the interaction potential.
The potential encodes the interactions that exist
between the states at different sites.
Formally, an \emph{interaction potential} $\Phi=(\Phi_A)_{A\in\mathcal S}$
is a family of functions such that $\Phi_A:\Omega\to \mathbb R\cup\{\infty\}$
is $\mathcal F_A$-measurable.
The potential $\Phi$ is called \emph{shift-invariant} if
$\Phi_{\theta A}(\omega)=\Phi_A(\theta\omega)$
for any $A\in\mathcal S$, $\theta\in\Theta$, $\omega\in\Omega$.
The potential $\Phi$ is called \emph{absolutely summable}
if
\[
  \|\Phi\|
  :=
  \sum_{A\in\mathcal S,\,0\in A}\|\Phi_A\|_\infty
  <\infty,
\]
where $\|\cdot\|_\infty$ denotes the supremum norm.
It is thus assumed that $\Phi$ is shift-invariant and absolutely summable.

The potential induces a Hamiltonian.
For $\Lambda\in \mathcal S$ and $\Delta\subset\mathbb Z^d$,
define
\[
  H_{\Lambda,\Delta}
  :=
  \sum_{A\in\mathcal S,\,A\cap\Lambda\neq\varnothing,\,A\subset\Delta}
  \Phi_A.
\]
In particular, the \emph{Hamiltonians} are
the functions of the form $H_\Lambda:=H_{\Lambda,S}$,
where $\Lambda\in\mathcal S$.
The reference measure $\lambda$ and the potential $\Phi$
generate a \emph{Gibbs specification} $\gamma=(\gamma_\Lambda)_{\Lambda\in\mathcal S}$
defined
by
\[
  \gamma_\Lambda(A,\omega)
  :=
  \frac{1}{Z_\Lambda^\omega}
  \int_{E^\Lambda}
  1_A(\zeta\omega_{S\setminus\Lambda})
  e^{-H_\Lambda(\zeta\omega_{S\setminus\Lambda})}
  d\lambda^\Lambda(\zeta)
\]
for any $\Lambda\in\mathcal S$, $\omega\in\Omega$,
and $A\in\mathcal F$,
where $Z_\Lambda^\omega$ is the normalizing constant
\[
\numberthis
\label{eq_Z_def}
Z_\Lambda^\omega:=
\int_{E^\Lambda}
e^{-H_\Lambda(\zeta\omega_{S\setminus\Lambda})}
d\lambda^\Lambda(\zeta).
\]

The Hamiltonian $H_\Lambda$ is always bounded by $|\Lambda|\cdot\|\Phi\|$.
Moreover,
for absolutely summable potentials,
the strength of the interaction decreases with the range.
We aim to show two things:
that the specification $\gamma$ is weakly dependent,
and that $\Omega_\gamma=\Omega$.
In that case, Corollary~\ref{corollary_first_half} and Corollary~\ref{corollary_second_half} prove the variational principle, where all almost Gibbs measures
are Gibbs measures.
For the analysis it is convenient to define,
for $\Lambda,\Delta\in\mathcal S$,
\[
  \varepsilon_{\Lambda,\Delta}:=\sum_{A\in\mathcal S,\,A\cap\Lambda\neq\varnothing,\,A\not\subset\Delta}
  \|\Phi_A\|_\infty.
\]
Compare this to the definition of $H_{\Lambda,\Delta}$ --- the construction implies the inequality
 $\|H_\Lambda-H_{\Lambda,\Delta}\|_\infty\leq \varepsilon_{\Lambda,\Delta}$.
 The constants $\varepsilon_{\Lambda,\Delta}$ contain
 precisely all the information
 that we need for proving weak dependence and that $\Omega_\gamma=\Omega$.
 To see this, we first prove the following lemma.

\begin{lemma}
For any $\omega\in\Omega$ and $\Lambda,\Delta\in\mathcal S$,
we have
  $\operatorname{Diam}^\infty\mathcal C(\mathcal B_{\Lambda,\Delta,\omega})
  \leq 4\varepsilon_{\Lambda,\Delta}$.
\end{lemma}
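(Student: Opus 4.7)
The plan is to unfold the definitions to reduce to a bound on the log--Radon--Nikodym derivative between finite-volume Gibbs measures with deterministic boundary conditions that coincide with $\omega$ on $\Delta\setminus\Lambda$. First, Lemma~\ref{lemma_mazur} gives $\operatorname{Diam}^\infty\mathcal C(\mathcal B_{\Lambda,\Delta,\omega})=\operatorname{Diam}^\infty\mathcal B_{\Lambda,\Delta,\omega}$, so I only need to bound $\mathcal H^\infty(\mu^1\hat\gamma_\Lambda\mid\mu^2\hat\gamma_\Lambda)$ for random fields $\mu^1,\mu^2$ with $\mu^i_\Delta=\delta_{\omega_\Delta}$. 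By the Fubini reduction recorded at the end of Subsection~\ref{subsec_weakly}, this further reduces to a bound on $\mathcal H^\infty(\hat\gamma_\Lambda(\cdot,\omega^1)\mid\hat\gamma_\Lambda(\cdot,\omega^2))$ for deterministic $\omega^1,\omega^2\in\Omega$ satisfying $\omega^1_{\Delta\setminus\Lambda}=\omega^2_{\Delta\setminus\Lambda}=\omega_{\Delta\setminus\Lambda}$, which is all that the kernel $\hat\gamma_\Lambda$ sees of the constraint $\mu^i_\Delta=\delta_{\omega_\Delta}$.

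Second, I write the Radon--Nikodym derivative explicitly from the Gibbs formula. It yields
\[
\log\frac{d\hat\gamma_\Lambda(\cdot,\omega^1)}{d\hat\gamma_\Lambda(\cdot,\omega^2)}(\zeta)
=\log Z_\Lambda^{\omega^2}-\log Z_\Lambda^{\omega^1}
+H_\Lambda(\zeta\omega^2_{S\setminus\Lambda})-H_\Lambda(\zeta\omega^1_{S\setminus\Lambda}),
\]
and $\mathcal H^\infty(\hat\gamma_\Lambda(\cdot,\omega^1)\mid\hat\gamma_\Lambda(\cdot,\omega^2))$ equals the essential supremum of this expression. It therefore suffices to bound each of the two differences by $2\varepsilon_{\Lambda,\Delta}$.

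Third, I use the decomposition $H_\Lambda=H_{\Lambda,\Delta}+(H_\Lambda-H_{\Lambda,\Delta})$, together with the pointwise bound $\|H_\Lambda-H_{\Lambda,\Delta}\|_\infty\leq\varepsilon_{\Lambda,\Delta}$ already recorded in the paper. The key observation is that $H_{\Lambda,\Delta}(\zeta\omega^i_{S\setminus\Lambda})$ depends on $\omega^i$ only through $\omega^i_{\Delta\setminus\Lambda}$, because every set $A$ appearing in the sum defining $H_{\Lambda,\Delta}$ is contained in $\Delta$; hence these truncated Hamiltonians agree for $i=1,2$. By the triangle inequality this gives
\[
\bigl|H_\Lambda(\zeta\omega^1_{S\setminus\Lambda})-H_\Lambda(\zeta\omega^2_{S\setminus\Lambda})\bigr|\leq 2\varepsilon_{\Lambda,\Delta},
\]
and the same estimate applied pointwise inside the integral (\ref{eq_Z_def}) defining $Z_\Lambda^{\omega^i}$ yields $|\log Z_\Lambda^{\omega^1}-\log Z_\Lambda^{\omega^2}|\leq 2\varepsilon_{\Lambda,\Delta}$. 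Adding the two estimates bounds the log Radon--Nikodym derivative in absolute value by $4\varepsilon_{\Lambda,\Delta}$, proving the claim.

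There is no real obstacle here; the proof is a direct computation once one isolates the role of the constraint $\mu_\Delta=\delta_{\omega_\Delta}$. The only point requiring care is checking that this constraint forces boundary configurations to agree precisely on $\Delta\setminus\Lambda$ (which is what matters for $\hat\gamma_\Lambda$), and that the splitting of $H_\Lambda$ into a $\Delta$-local piece and a remainder of sup-norm at most $\varepsilon_{\Lambda,\Delta}$ accounts for both the Hamiltonian difference and the partition-function ratio.
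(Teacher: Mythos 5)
Your proof is correct and follows essentially the same route as the paper: pass from $\mathcal C(\mathcal B_{\Lambda,\Delta,\omega})$ to $\mathcal B_{\Lambda,\Delta,\omega}$, reduce to deterministic boundary conditions agreeing with $\omega$ on $\Delta$, and then use the splitting $H_\Lambda = H_{\Lambda,\Delta} + (H_\Lambda - H_{\Lambda,\Delta})$ with $\|H_\Lambda - H_{\Lambda,\Delta}\|_\infty\le\varepsilon_{\Lambda,\Delta}$ to bound both the Hamiltonian difference and the partition-function ratio by $2\varepsilon_{\Lambda,\Delta}$. The only cosmetic difference is that you explicitly note the constraint matters only through $\omega_{\Delta\setminus\Lambda}$, whereas the paper phrases it as agreement on all of $\Delta$; this is the same observation.
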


\begin{proof}
Fix $\omega',\omega''\in\Omega$
such that $\omega_\Delta=\omega_\Delta'=\omega_\Delta''$.
Choose $\zeta\in E^\Lambda$.
Then
$H_{\Lambda,\Delta}(\zeta\omega_{S\setminus\Lambda}')=H_{\Lambda,\Delta}(\zeta\omega_{S\setminus\Lambda}'')$,
and the triangular inequality implies that
\begin{align*}
&
|
H_\Lambda(\zeta\omega_{S\setminus\Lambda}')
-
H_\Lambda(\zeta\omega_{S\setminus\Lambda}'')
|
\\&\qquad
\leq
|H_\Lambda(\zeta\omega_{S\setminus\Lambda}')-H_{\Lambda,\Delta}(\zeta\omega_{S\setminus\Lambda}')|
+
|H_\Lambda(\zeta\omega_{S\setminus\Lambda}'')-H_{\Lambda,\Delta}(\zeta\omega_{S\setminus\Lambda}'')|
\leq
2\varepsilon_{\Lambda,\Delta}
.
\end{align*}
This inequality and (\ref{eq_Z_def}) --- the definition of $Z_\Lambda^\omega$ --- imply that
\[
  |\log Z_\Lambda^{\omega'}-\log Z_\Lambda^{\omega''}|\leq 2\varepsilon_{\Lambda,\Delta}.
\]
The definition of the specification implies that
$
\hat\gamma_\Lambda(\cdot,\omega)=
\frac{1}{Z_\Lambda^\omega}e^{-H_\Lambda(\cdot\omega_{S\setminus\Lambda})}\lambda^\Lambda
$,
and therefore we deduce from the inequalities in the previous two displays that
$
\mathcal H^\infty(\hat\gamma_\Lambda(\cdot,\omega'),\hat\gamma_\Lambda(\cdot,\omega''))
\leq
4\varepsilon_{\Lambda,\Delta}$.
Conclude that
\[
  \operatorname{Diam}^\infty\mathcal C(\mathcal B_{\Lambda,\Delta,\omega})
  =\operatorname{Diam}^\infty\mathcal B_{\Lambda,\Delta,\omega}
  =
  \sup_{\omega',\omega''\in\Omega,\,\omega_\Delta=\omega'_\Delta=\omega_\Delta''}
  \mathcal H^\infty(\hat\gamma_\Lambda(\cdot,\omega'),\hat\gamma_\Lambda(\cdot,\omega''))
  \leq
  4\varepsilon_{\Lambda,\Delta}.\qedhere
\]
\end{proof}

We now simply employ the bound provided by the lemma,
in order to arrive at the variational principle.
To deduce the variational principle with Gibbs measures,
we must prove that the specification $\gamma$ is weakly dependent,
and that $\Omega_\gamma=\Omega$.
By the lemma,
we know that
\begin{enumerate}
  \item $\operatorname{Diam}^\infty\mathcal B_{\Delta_n}(\gamma)\leq 4\varepsilon_{\Delta_n,\Delta_n}$,
  \item $\operatorname{Diam}^\infty\mathcal B_{\Lambda,\Delta_n,\omega}(\gamma)\leq 4\varepsilon_{\Lambda,\Delta_n}$
  for any $\omega\in\Omega$.
\end{enumerate}
To prove weak dependence,
it is therefore sufficient to show that $\varepsilon_{\Delta_n,\Delta_n}=o(|\Delta_n|)$
as $n\to\infty$.
Similarly, to prove that $\Omega_\gamma=\Omega$,
it is sufficient to show that $\varepsilon_{\Lambda,\Delta_n}\to 0$
as $n\to\infty$ for any $\Lambda\in\mathcal S$,
as this would imply that
\[
  \operatorname{Diam}^\infty\mathcal B_{\Lambda,\omega}(\gamma)
  \leq
  \inf_{n\in\mathbb N}
  \operatorname{Diam}^\infty\mathcal C(\mathcal B_{\Lambda,\Delta_n,\omega}(\gamma))
  =0.
\]
Start with the latter.
It is immediate from the definition of $\varepsilon_{\Lambda,\Delta_n}$
that
\[
  \varepsilon_{\Lambda,\Delta_n}
  \leq
  \sum_{x\in\Lambda}
  \varepsilon_{\{x\},\Delta_n}
  =
  \sum_{x\in\Lambda}
  \varepsilon_{\{0\},\Delta_n-x}\to 0
\]
as $n\to\infty$, because $|\Lambda|$ and $||\Phi||$ are both finite.
This proves that  $\Omega_\gamma=\Omega$.
For weak dependence, decompose
\begin{align*}
  \varepsilon_{\Delta_n,\Delta_n}
  &\leq
  \sum\nolimits_{x\in\Delta_n}
  \varepsilon_{\{x\},\Delta_n}
  =
  \sum\nolimits_{x\in\Delta_n}
  \varepsilon_{\{0\},\Delta_n-x}
  \\&
  =
  \sum\nolimits_{x\in\Delta_{n-\lfloor\log n\rfloor}}
  \varepsilon_{\{0\},\Delta_n-x}
  +
  \sum\nolimits_{x\in\Delta_n\setminus\Delta_{n-\lfloor\log n\rfloor}}
  \varepsilon_{\{0\},\Delta_n-x}
\\& \leq
|\Delta_{n-\lfloor\log n\rfloor}|\cdot \varepsilon_{\{0\},\Delta_{\lfloor\log n\rfloor}}
+|\Delta_n\setminus\Delta_{n-\lfloor\log n\rfloor}|\cdot||\Phi||
=o(|\Delta_n|)
\end{align*}
as $n\to\infty$.

%!TEX root=../main.tex
\subsection{The random-cluster model}
\label{subsection_rcm}
Let us introduce the random-cluster model.
Fix
an \emph{edge-weight} $p\in(0,1)$
and a \emph{cluster-weight} $q\in(0,\infty)$.
The idea of the random-cluster model
is to perform
 independent bond percolation (with parameter $p$) on (a subset of)
the square lattice $\mathbb Z^d$,
and subsequently weight each configuration by $q$
raised to the number of percolation clusters in the resulting random graph.
To cast the random-cluster model into the formalism of this paper,
we must first choose a suitable state space $(E,\mathcal E)$
for the vertices $x\in\mathbb Z^d$, which allows us to encode
for each edge if it is open or not.
There exists a natural way to do this:
with each vertex $x$ we associate the $d$
edges of the form
$\{x,x+e_i\}$ with $1\leq i\leq d$.
The state space that we choose
is
\[
  E=\{0,1\}^{\{1,\dots,d\}},
\]
where for $\omega_x\in E$ the $i$-th coordinate
is a $1$ if the edge $\{x,x+e_i\}$ is open
and $0$ if it is closed.
For $e\in E$ we define $|e|:=|\{1\leq i\leq d:e_i=1\}|$,
the number of open edges encoded in $e$.
If $\omega\in E^\Lambda$ for some $\Lambda\in\mathcal S$,
then write $\|\omega\|:=\sum_{x\in\Lambda}|\omega_x|$.
If $\omega\in\Omega$ and $\Lambda\in\mathcal S$,
then define
% \[
$
  C(\omega,\Lambda)$
  to be
  the number of open clusters of $\omega$ that intersect $\Lambda$ or contain a vertex adjacent to $\Lambda$.
It is important to observe that
\[
\numberthis
\label{equation_useful_boundary_bound}
|C(\omega,\Lambda)-C(\zeta,\Lambda)|\leq 2|\partial\Lambda|
\]
if $\omega_\Lambda=\zeta_\Lambda$,
where $\partial\Lambda$ denotes the \emph{edge boundary}
of $\Lambda$, that is, set of edges
of the square lattice with exactly one endpoint in $\Lambda$.
We now introduce the
 specification $\gamma=(\gamma_\Lambda)_{\Lambda\in\mathcal S}$
corresponding to the random-cluster model.
For any
$\omega\in\Omega$,
$\Lambda\in\mathcal S$,
and $\zeta\in E^\Lambda$,
we define the \emph{weight function}
 \[
 w(\zeta,\omega,\Lambda):=p^{\|\zeta\|}
 (1-p)^{d|\Lambda|-\|\zeta\|}
 q^{C(\zeta\omega_{S\setminus\Lambda},\Lambda)}.
 \]
The probability kernel $\hat\gamma_\Lambda$ corresponding to the random-cluster model
is now defined by
\[
  \hat\gamma_\Lambda(\zeta,\omega)
  :=
  \frac{1}{Z^\omega_\Lambda}
  w(\zeta,\omega,\Lambda),
\]
where $Z^\omega_\Lambda$ is a suitable normalization constant.
The
complete, nonrestricted probability
kernel $\gamma_\Lambda$
is given by $\gamma_\Lambda(\cdot,\omega)=\hat\gamma_\Lambda(\cdot,\omega)\times\delta_{\omega_{S\setminus\Lambda}}$.
Let us now prove that the resulting specification $\gamma=(\gamma_\Lambda)_{\Lambda\in\mathcal S}$ is weakly dependent.
From (\ref{equation_useful_boundary_bound}) and the definition of $w$
it is clear that
\[
\left|
\log \frac{w(\zeta,\omega,\Lambda)}{ w(\zeta,\omega',\Lambda)}
\right|
\leq 2|\partial\Lambda||\log q|
\]
for all possible $\zeta$, $\omega$, $\omega'$, and $\Lambda$.
As a direct consequence
\[
\left|
\log \frac{Z_\Lambda^\omega}{Z_\Lambda^{\omega'}}
\right|
\leq 2|\partial\Lambda||\log q|,
\quad
\text{and}
\quad
\left|
\log \frac{\hat\gamma_\Lambda(\zeta,\omega)}{\hat\gamma_\Lambda(\zeta,\omega')}
\right|
\leq 4|\partial\Lambda||\log q|.
\]
The right inequality implies that
\[
  \operatorname{Diam}^\infty\mathcal B_{\Delta_n}(\gamma)
  \leq 4|\partial\Delta_n||\log q|
  =o(|\Delta_n|)
\]
as $n\to\infty$, which proves that the specification $\gamma$ is weakly dependent.

The goal is to prove the variational principle,
which asserts the equivalence in Equation~\ref{eq_var_pr_equivalence}
for any shift-invariant random field $\mu$.
Weak dependence of $\gamma$ gives us access to the framework that is developed
in this paper.
In particular, we have the following three results:
\begin{enumerate}
  \item There exists at least one shift-invariant measure $\mu$ such that $h(\mu|\gamma)=0$,
  \item If $\mu$ is a shift-invariant DLR state, then $\mu\in h_0(\gamma)$,
  \item If $\mu\in h_0(\gamma)$ and $\mu(\Omega_\gamma)=1$,
  then $\mu$ is almost Gibbs.
\end{enumerate}
To arrive at the variational principle,
it is now sufficient to prove that $\mu(\Omega_\gamma)=1$
whenever $\mu\in h_0(\gamma)$.
%
%
% Weak dependence gives us all the tools that are developed in this paper.
% In particular,
% Corollary~\ref{corollary_first_half}
% says that shift-invariant DLR states minimize the SFE $h(\cdot|\gamma)$.
% Corollary~\ref{corollary_second_half}
% asserts that a minimizer $\mu\in h_0(\gamma)$
% is almost Gibbs if $\mu(\Omega_\gamma)=1$.
% Thus, in order to derive the full variational principle,
% we must show that $\mu(\Omega_\gamma)=1$ for
% minimizers $\mu\in h_0(\gamma)$.
%
% Suppose that $\mu\in h_0(\gamma)$.

Define
\[
  \Omega':=\left\{
    \omega\in\Omega:
    \parbox{25em}{if $\zeta\in\Omega$ is any other configuration
    that
    equals  $\omega$ up to finitely many edges,
    then $\zeta$ has at most one infinite component}
  \right\}.
\]
It follows from the well-known argument of Burton and Keane
\cite{BURTON}
that $\mu(\Omega')=1$ whenever $\mu$ is a shift-invariant random field
with finite energy.
Minimizers of the specific free energy have finite energy due to
Corollary~\ref{lemma_applications_finite_energy}.
Thus, in order to deduce the variational principle
for the random-cluster model,
it suffices to demonstrate that $\Omega'\subset\Omega_\gamma$.

Fix $\omega\in\Omega'$,
and claim that $\omega\in\Omega_\gamma$.
This is well-known for the random-cluster model, but perhaps not in the language of this article;
 we give a concise proof.
 Fix $\Lambda\in\mathcal S$.
 We make the stronger claim that
 \[
 \mathcal B_{\Lambda,\Delta,\omega}(\gamma)=\{\hat\gamma_\Lambda(\cdot,\omega)\}
 \]
 for $\Delta$ sufficiently large.
 In other words,
 we claim that for some appropriate choice of $\Delta$,
 the measure
$\hat\gamma_\Lambda(\cdot,\omega)$ is invariant
under changing $\omega$ on the complement of $\Delta$.
The point is that the dependence of $\hat\gamma_\Lambda(\cdot,\omega)$
on $\omega$ is through the way that the percolation structure encoded in $\omega$
connects the vertices in the boundary of $\Lambda$ with paths
through the complement of $\Lambda$.
Choose $\Delta\in\mathcal S$ such that
\begin{enumerate}
  \item $\Delta$ contains $\Lambda$,
  \item If $x$ is adjacent to $\Lambda$
  and part of a finite $\omega$-cluster,
  then $\Delta$ contains that entire finite $\omega$-cluster and all vertices adjacent to it,
  \item If $x$ and $y$ are adjacent to $\Lambda$
  and contained in the infinite $\omega$-cluster,
  then $\Delta$ contains an open path from $x$ to $y$ through the complement of $\Lambda$.
\end{enumerate}
The choice $\omega\in\Omega'$ guarantees that the open path from $x$ to $y$ through the
complement of $\Lambda$ exists.
The merit of this choice of $\Delta$ is of course that
\[
  C(\xi,\Lambda)=C(\xi',\Lambda),
\]
whenever $\xi,\xi'\in\Omega$ are chosen such that $\xi_\Delta=\xi_\Delta'$
and $\xi_{\Delta\setminus\Lambda}=\xi'_{\Delta\setminus\Lambda}=\omega_{\Delta\setminus\Lambda}$.
In particular, this implies that
\[
w(\zeta,\omega,\Lambda)=w(\zeta,\omega',\Lambda)
\]
for any $\zeta\in E^\Lambda$ and for any $\omega'\in\Omega$
such that $\omega_\Delta'=\omega_\Delta$.
Conclude that $\hat\gamma_\Lambda(\cdot,\omega')=\hat\gamma_\Lambda(\cdot,\omega)$
for such $\omega'\in\Omega$,
which implies the claim.

%!TEX root=../main.tex
\subsection{The loop $O(n)$ model}
\label{subsection_loopOn}
The arguments for the variational principle
for the random-cluster model
work for any weakly dependent model
in which the long-range interaction is due to
weight on percolation clusters, level sets, paths, or other large geometrical
objects which arise from the local structure (for the random-cluster model
this was the cluster-weight $q$).
The variational principle holds true for all such models.
Consider, for example, the loop $O(n)$ model.
In this model, one draws
disjoint loops on the hexagonal lattice;
the probability of a certain configuration depends on
the number of loops and on the number of loop edges
in that configuration.
It is thus a two-parameter model, much like the random-cluster model.
See the work of Peled and Spinka \cite{PELED} for a detailed introduction.
The loop $O(n)$ model may be formalized as follows:
it is a model of random functions from the faces
of the hexagonal lattice to $E=\{0,1\}$.
The number of level sets of these functions corresponds to the number of loops in the loop $O(n)$ model,
and the number of edges on which the function is not constant corresponds to the number of edges that are contained in a loop.
Remark that in this case the Burton and Keane argument tells us
that there is at most one infinite level set on which the function equals $0$,
and at most one infinite level set on which the function equals $1$.
If both infinite level sets are present, then they are clearly distinguished by their type.

%!TEX root=../main.tex

\subsection{The Griffiths singularity random field}
\label{subsection_ising}
The Griffiths singularity random field was introduced by Van Enter, Maes, Schonmann,
and Shlosman~\cite{VanENTER}.
They study the model in relation to the phenomenon of so-called
\emph{Griffiths singularities}.
The model depends on two parameters:
the \emph{percolation parameter} $p\in (0,1)$,
and the \emph{inverse temperature} $\beta\in\mathbb R$;
both are fixed throughout the discussion.
We take $\beta\geq 0$ without loss of generality, which corresponds to the ferromagnetic setting.
To draw from the Griffiths singularity random field $K_{p,\beta}$,
one first samples independent site percolation
with parameter $p$;
then, on each percolation cluster, one samples
an independent Ising model with parameter $\beta$.
The Griffiths singularity random field is thus
an Ising model in a random environment.

First, we introduce some notation.
A natural choice for the state space is
$E=\{-1,0,1\}$.
The state $0$ indicates a closed vertex,
while the state $\pm1$ indicates an open vertex of that spin.
Write $\mathcal E$ for the powerset of $E$, a $\sigma$-algebra,
and $\mathcal E_0$ for the $\sigma$-algebra on $E$ generated by
the function $1_0$.
Let $\mathcal F^0$ denote the product $\sigma$-algebra $\mathcal E_0^S$.
If $\omega\in\Omega$
or $\omega\in E^\Lambda$
for some $\Lambda\subset S$, then write $\Pi(\omega)\subset\mathbb Z^d$
for the set of open vertices.
We consider each configuration $\omega\in\Omega$
to be a function from $\mathbb Z^d$ to $\{-1,0,1\}$,
and in that light we treat
$|\omega|$, $-\omega$, and $1_\Lambda$
as configurations in $\Omega$
for any
 $\omega\in\Omega$
or $\Lambda\subset \mathbb Z^d$.
There is a natural ordering $\leq$ on $\Omega$;
 write $\omega^1\leq\omega^2$
whenever $\omega_x^1\leq\omega_x^2$ for any $x\in\mathbb Z^d$.
If
$\mu_1,\mu_2\in\mathcal P(\Omega,\mathcal F)$,
then write $\mu_1\preceq\mu_2$
if $\mu_1$ is stochastically dominated by
$\mu_2$, that is,
if there exists a coupling between $\mu_1$
and $\mu_2$ such that $\omega^1\leq \omega^2$
almost surely.
Finally, the square lattice $\mathbb Z^d$ has naturally associated to it
an edge set; write $xy$ (juxtaposition) for an unordered pair of neighboring vertices $x,y\in\mathbb Z^d$
in this graph.
Write $\partial\Lambda$ for the edge boundary of
any set $\Lambda\in\mathcal S$, as in the analysis of the random-cluster model.

\subsubsection{The Ising model on a finite graph}
For finite sets $\Lambda\in\mathcal S$,
the \emph{Ising model} in $\Lambda$
is a probability measure
on $E^\Lambda$ defined by
\[
\alpha_\Lambda(\omega)
%:=\frac{1}{Z}
\propto\prod_{xy\subset \Lambda}e^{-\beta\omega_x\omega_y}
\]
if $\omega_x=\pm 1$ for every $x\in \Lambda$,
and $\alpha_\Lambda(\omega)=0$ otherwise.
% where $Z$ denotes the normalization constant.
% The product is over all undirected edges $xy$ of the square lattice
% that are contained in $\Lambda$.
The following key identity is a corollary of the definition:
\[
\numberthis
\label{eq_ising_decomp}
  \alpha_{\Lambda}(\omega)
  =  \frac{1}{Z}
\cdot
  f_{\Lambda,\Delta}(\omega)
  \cdot \alpha_{\Lambda\cap\Delta}(\omega_{\Lambda\cap\Delta})\cdot\alpha_{\Lambda\setminus\Delta}(\omega_{\Lambda\setminus\Delta})
\]
for any $\Lambda,\Delta\in\mathcal S$ and $\omega\in E^\Lambda$,
where
\[
f_{\Lambda,\Delta}(\omega):=\prod\nolimits_{xy\subset \Lambda,\,xy\in\partial\Delta}
e^{-\beta\omega_x\omega_y}
\qquad\text{and}\qquad
  Z=\int_{E^\Lambda} f_{\Lambda,\Delta} d(\alpha_{\Lambda\cap\Delta}\times\alpha_{\Lambda\setminus\Delta}).
\]
In particular,
if $\Lambda\in\mathcal S$
and $\Delta$ a connected component of $\Lambda$,
 then  (\ref{eq_ising_decomp}) implies that
 $\alpha_\Lambda=\alpha_\Delta\times\alpha_{\Lambda\setminus\Delta}$.

 If $\Lambda\in\mathcal S$
 and $\omega\in E^\Delta$
 for some $\Lambda\subset\Delta\subset S$,
 then we sometimes
 write $\alpha_\Lambda(\omega)$
 for $\alpha_\Lambda(\omega_\Lambda)$.

\subsubsection{The Ising model on an infinite graph}

The Ising model on infinite subgraphs of $\mathbb Z^d$
is introduced in terms of the associated specification,
which is denoted by $\kappa=(\kappa_\Lambda)_{\Lambda\in\mathcal S}$.
Consider arbitrary $\Lambda\in\mathcal S$
and $\omega\in\Omega$.
Informally, the measure $\kappa_\Lambda(\cdot,\omega)\in\mathcal P(\Omega,\mathcal F)$
is the Ising model in the graph $\Pi(\omega)\cap\Lambda$
--- the edges inherited from the square lattice --- subject to boundary conditions provided by the configuration $\omega$.
Formally, $\kappa_\Lambda(\cdot,\omega)$ is the unique random field such that
\[
  \kappa_\Lambda(\zeta,\omega)
  \propto
  \prod_{
    \text{$
      xy\subset \Lambda$
      or
      $xy\in\partial\Lambda$
      }
  }e^{-\beta \zeta_x\zeta_y}
\]
for any $\zeta\in\Omega$ such that $\zeta_{S\setminus\Lambda}=\omega_{S\setminus\Lambda}$
and
$\Pi(\zeta)=\Pi(\omega)$,
and $\kappa_\Lambda(\zeta,\omega)=0$
for all other $\zeta$.
Of course, the only edges $xy$ that contribute to
the product in the display are the ones that are also contained
in $\Pi(\zeta)=\Pi(\omega)$.
As per usual, we abbreviate $\hat\kappa_\Lambda(\cdot,\omega):=\sigma_\Lambda(\kappa_\Lambda(\cdot,\omega))$,
and we observe that  $\alpha_\Lambda=\hat\kappa_\Lambda(\cdot,1_\Lambda)$ in this notation.

The interest is however in the Ising model
in the entire
graph induced by $\Pi(\omega)$.
By monotonicity,
the sequence of random fields
$
  (\kappa_{\Delta_n}(\cdot,|\omega|))_{n\in\mathbb N}
$
is decreasing with respect to $\preceq$,
and therefore tends to a limit in the $\mathcal L$-topology as $n\to\infty$.
Write $\kappa^+(\cdot,\omega)$
for this limit, and similarly write $\kappa^-(\cdot,\omega)$
for the limit of the increasing sequence $(\kappa_{\Delta_n}(\cdot,-|\omega|))_{n\in\mathbb N}$.
Remark that both $\kappa^-(\cdot,\omega)$
and $\kappa^+(\cdot,\omega)$ depend on the percolation structure $\Pi(\omega)$
of $\omega$ only, and not on the spins of the open sites.
In other words, $\kappa^+$ and $\kappa^-$ are probability kernels from $(\Omega,\mathcal F^0)$
to $(\Omega,\mathcal F)$.
A monotonicity argument implies
that
$\kappa^-(\cdot,\omega)\preceq \kappa^+(\cdot,\omega)$.
If the two measures are distinct,
then it is said that the Ising model \emph{magnetizes} on $\Pi(\omega)$.
Write $M\subset\Omega$
for the collection of configurations $\omega$
such that
the Ising model magnetizes on $\Pi(\omega)$.
The set $M$ is measurable with respect to $\mathcal F^0$.
It is also measurable with respect to $\mathcal F_{S\setminus\Lambda}^0$,
for any $\Lambda\in\mathcal S$.
In other words, $M$ is tail measurable.
If $\zeta\in\Omega-M$,
then another monotonicity argument
implies that $\kappa^+(\cdot,\zeta)$
is the unique random field such
that almost surely $\Pi(\omega)=\Pi(\zeta)$
and which is invariant under each probability kernel $\kappa_\Lambda$.
We finally state an important proposition,
which also follows from monotonicity.

\begin{proposition}
  The map $\omega\mapsto\kappa^+(\cdot,\omega)$
  is continuous --- both sides endowed with the $\mathcal L$-topology ---
  at some $\zeta\in\Omega$ if and only if $\zeta\not\in M$.
\end{proposition}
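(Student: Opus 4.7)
My plan is to handle the two directions separately, exploiting throughout the monotonicity of the Ising model in its boundary conditions, captured by the standard sandwich
\[
\hat\kappa_\Delta(\cdot,-|\omega|)\preceq \kappa^-(\cdot,\omega)\preceq \kappa^+(\cdot,\omega)\preceq \hat\kappa_\Delta(\cdot,|\omega|),
\]
valid for every $\omega\in\Omega$ and every $\Delta\in\mathcal S$. The crucial point about the outer bounds is that $\hat\kappa_\Delta(\cdot,\pm|\omega|)$ depends on $\omega$ only through its restriction to $\Delta$ together with the outer vertex boundary of $\Delta$.

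For the ``if'' direction, assume $\zeta\notin M$, so $\kappa^+(\cdot,\zeta)=\kappa^-(\cdot,\zeta)$, and let $\omega^n\to\zeta$ in the $\mathcal L$-topology. Fix $\Delta\in\mathcal S$; then for $n$ sufficiently large $\omega^n$ coincides with $\zeta$ on $\Delta$ and its outer vertex boundary, so $\hat\kappa_\Delta(\cdot,\pm|\omega^n|)=\hat\kappa_\Delta(\cdot,\pm|\zeta|)$ and the sandwich yields
\[
\hat\kappa_\Delta(\cdot,-|\zeta|)\preceq \kappa^+(\cdot,\omega^n)\preceq \hat\kappa_\Delta(\cdot,|\zeta|).
\]
Extract any $\mathcal L$-subsequential limit $\mu^\ast$ of $(\kappa^+(\cdot,\omega^n))_n$, which exists by compactness of $\mathcal P(\Omega,\mathcal F)$; since stochastic domination is closed under $\mathcal L$-limits, $\mu^\ast$ satisfies the same sandwich for every $\Delta$. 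Letting $\Delta\uparrow S$, the outer bounds monotonically $\mathcal L$-converge to $\kappa^-(\cdot,\zeta)$ and $\kappa^+(\cdot,\zeta)$, which coincide by hypothesis; hence $\mu^\ast$ and $\kappa^+(\cdot,\zeta)$ have identical expectations for every bounded increasing cylinder function. For any finite $\Lambda$, both measures project onto the finite product lattice $\{-1,+1\}^{\Pi(\zeta)\cap\Lambda}$ (their percolation structure on $\Lambda$ is deterministic and equal), and on this lattice a M\"obius-inversion argument shows that agreement on all up-sets forces agreement on every atom. Therefore $\mu^\ast=\kappa^+(\cdot,\zeta)$ and continuity holds.

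For the ``only if'' direction, assume $\zeta\in M$. Define $\omega^n\in\Omega$ by $\omega^n_x:=\zeta_x$ on $\Delta_n$ and $\omega^n_x:=0$ otherwise, so that $\omega^n\to\zeta$ in the $\mathcal L$-topology while $\Pi(\omega^n)=\Pi(\zeta)\cap\Delta_n$ is finite. Because $|\omega^n|$ vanishes outside $\Delta_n$, every $\kappa_{\Delta_k}(\cdot,|\omega^n|)$ with $k\geq n$ has trivial boundary contributions and so equals the free-boundary Ising measure on the finite graph $\Pi(\zeta)\cap\Delta_n$; hence so does the limit $\kappa^+(\cdot,\omega^n)$. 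This measure is invariant under the spin-flip involution $\omega\mapsto-\omega$ (which fixes closed sites), and any $\mathcal L$-subsequential limit $\mu^\infty$ inherits this symmetry. But the spin flip sends $\kappa^+(\cdot,\zeta)$ to $\kappa^-(\cdot,\zeta)$, and these differ precisely because $\zeta\in M$; so $\kappa^+(\cdot,\zeta)$ is not spin-flip invariant, and $\mu^\infty\neq\kappa^+(\cdot,\zeta)$. Discontinuity at $\zeta$ follows.

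The main obstacle lies in the ``if'' direction: stochastic domination only controls expectations of increasing functions, whereas $\mathcal L$-convergence requires convergence on every cylinder atom. The reduction to the finite product lattice $\{-1,+1\}^{\Pi(\zeta)\cap\Lambda}$---where any indicator is a signed combination of up-set indicators---neatly closes that gap.
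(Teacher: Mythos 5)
The paper states this proposition without proof, saying only that it ``follows from monotonicity'', so there is no detailed argument to compare against; your proof supplies the missing details in the natural way, via the monotone sandwich $\kappa_\Delta(\cdot,-|\omega|)\preceq\kappa^\pm(\cdot,\omega)\preceq\kappa_\Delta(\cdot,|\omega|)$, and it is correct. The key idea in the ``if'' direction is handled properly: stochastic domination from both sides only gives agreement of $\mu^\ast$ and $\kappa^+(\cdot,\zeta)$ on increasing events, and you close this gap by observing that both marginals are supported on the finite Boolean lattice $\{-1,+1\}^{\Pi(\zeta)\cap\Lambda}$, where a M\"obius-inversion (equivalently: antisymmetry of $\preceq$ via Strassen) upgrades agreement on up-sets to equality of measures. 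The ``only if'' direction via truncation to $\Delta_n$ and the spin-flip symmetry of the free-boundary Ising measure is clean and correct: the limit inherits $\omega\mapsto-\omega$ invariance, while $\kappa^+(\cdot,\zeta)$ cannot have it when $\zeta\in M$, since its image under the flip is $\kappa^-(\cdot,\zeta)\neq\kappa^+(\cdot,\zeta)$. Two small remarks: the sandwich as written compares measures on $E^\Delta$ with measures on $\Omega$ --- what is meant is the comparison of $\Delta$-marginals, which does hold since projections preserve $\preceq$; and the compactness of $\mathcal P(\Omega,\mathcal F)$ in the $\mathcal L$-topology that you invoke is valid here precisely because $E=\{-1,0,1\}$ is finite (it would not hold for a general standard Borel state space).
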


\subsubsection{The random percolation environment}
Write $P_p$ for the percolation measure with parameter $p$,
that is, the measure in which each vertex takes value
$1$ with probability $p$,
and value $0$ with probability $1-p$,
independently of all other vertices.
Note that we have a zero-one law for the tail-measurable event $M$ in $P_p$.
We therefore distinguish three phases at most:
one phase of subcritical percolation,
one phase of supercritical percolation but with $P_p(M)=0$,
and one phase of supercritical percolation with $P_p(M)=1$.
Clearly $P_p(M)=0$ in the subcritical percolation regime
as there are no infinite clusters almost surely
and therefore the infinite Ising model decomposes
into the product of infinitely many finite cluster Ising models.
The interesting regime is therefore the supercritical percolation regime.
% The variational principle is known to fail
% in the magnetic phase, due to \cite{KULSKE}.
Our goal is to prove the variational principle
for the nonmagnetic phase --- both in the subcritical and supercritical percolation regime.

\subsubsection{Below critical percolation}
Let us for now assume that we are in the subcritical
percolation regime $p<p_c$,
so that we avoid the presence of an infinite percolation cluster altogether.
The Griffiths singularity random field $K_{p,\beta}$
is simply defined by the equation
$K_{p,\beta}:=P_p\kappa^+$.
To sample from $K_{p,\beta}$, one first samples
the percolation structure $\zeta$ from $P_p$,
then one draws the final sample $\omega$
from the Ising model $\kappa^+(\cdot,\zeta)$,
which decomposes into a product of Ising models
on the finite clusters of $\Pi(\zeta)$
almost surely.

Fix $\Lambda\in\mathcal S$.
Observe that
$K_{p,\beta}$
is invariant under the kernel
which first resamples the percolation
structure
on $\Lambda$,
then resamples the Ising model
on each percolation cluster that intersects $\Lambda$.
This motivates the definition of a natural specification
associated to $K_{p,\beta}$.
First,
 consider those $\omega\in\Omega$
for which there is no infinite percolation cluster.
For any $\Lambda\in\mathcal S$,
write $\Gamma(\omega,\Lambda)\subset\mathbb Z^d$
for the union of $\omega$-open clusters that contain a vertex
that is in or adjacent to $\Lambda$.
Also write $\|\omega_\Lambda\|$
for the number of $\omega$-open vertices in $\Lambda$.
For such $\omega$ and $\Lambda$,
we define the probability measure $\hat\gamma_\Lambda(\cdot,\omega)$ by
\[
\numberthis
\label{eq_ising_kernel_def}
\hat\gamma_\Lambda(\zeta,\omega)
:=
\frac{1}{Z_\Lambda^\omega}
p^{\|\zeta\|}
(1-p)^{|\Lambda|-\|\zeta\|}
\alpha_{\Gamma(\zeta\omega_{S\setminus\Lambda},\Lambda)}(\zeta\omega_{S\setminus\Lambda}),
\]
where $Z_\Lambda^\omega$ is a suitable normalization constant,
and $\zeta$ ranges over $E^\Lambda$.
As per usual, the full kernel $\gamma_\Lambda$
is recovered through the equation $\gamma_\Lambda(\cdot,\omega)=\hat\gamma_\Lambda(\cdot,\omega)\times\delta_{\omega_{S\setminus\Lambda}}$.
It follows from this definition and the intuitive picture that
$K_{p,\beta}=K_{p,\beta}\gamma_\Lambda$ for every $\Lambda\in\mathcal S$,
even though we have not yet defined $\gamma_\Lambda(\cdot,\omega)$
for those $\omega$ with an infinite percolation cluster.

Let us now rewrite the previous definition of $\hat\gamma_\Lambda(\cdot,\omega)$
into an expression that is less intuitive but more useful for the analysis.
First, write $\xi:=\zeta\omega_{S\setminus\Lambda}$
and $\Gamma:=\Gamma(\xi,\Lambda)$.
Use (\ref{eq_ising_decomp}) to obtain
\begin{align*}
  \alpha_\Gamma(\xi)
  =
  \frac{
  f_{\Gamma,\Lambda}(\xi)
  \cdot
  \alpha_{\Gamma\cap\Lambda}(\zeta)
  \cdot
  \alpha_{\Gamma\setminus\Lambda}(\omega)
  }{
  (\alpha_{\Gamma\cap\Lambda}\times\alpha_{\Gamma\setminus\Lambda})(f_{\Gamma,\Lambda})
  }
\end{align*}
Note that $\Gamma\cap\Lambda=\Pi(\zeta)$.
The set $\Gamma(\zeta\omega_{S\setminus\Lambda},\Lambda)-\Lambda$
depends on $\omega_{S\setminus\Lambda}$ only,
and therefore $\alpha_{\Gamma\setminus\Lambda}(\omega)$ is independent
of $\zeta$.
We may therefore combine $\alpha_{\Gamma\setminus\Lambda}(\omega)$ with the normalization constant
in (\ref{eq_ising_kernel_def})
to obtain
\[
% \numberthis
% \label{eq_grising_spec_def_general}
\hat\gamma_\Lambda(\zeta,\omega)
=
\frac{1}{Z_\Lambda^\omega}
p^{\|\zeta\|}
(1-p)^{|\Lambda|-\|\zeta\|}
\alpha_{\Pi(\zeta)}(\zeta)
\frac{
f_{\Gamma,\Lambda}(\xi)
}{
  (\alpha_{\Pi(\zeta)}\times\alpha_{\Gamma\setminus\Lambda})(f_{\Gamma,\Lambda})
};
\]
now with a different normalization constant.
If we write $f_\Lambda$
for the function
\[
  f_\Lambda(\omega):=\prod_{xy\in\partial\Lambda}
  e^{-\beta\omega_x\omega_y},
\]
then the previous equation simplifies to
\[
\numberthis
\label{eq_grising_spec_def_general_simplified}
\hat\gamma_\Lambda(\zeta,\omega)
=
\frac{1}{Z_\Lambda^\omega}
p^{\|\zeta\|}
(1-p)^{|\Lambda|-\|\zeta\|}
\alpha_{\Pi(\zeta)}(\zeta)
\frac{
f_{\Lambda}(\xi)
}{
  (
  \hat\kappa_\Lambda(\cdot,1_{\Pi(\zeta)})
  \times
  \sigma_{S\setminus\Lambda}
    (
      \kappa^+(\cdot,1_{\Pi(\omega)-\Lambda})
    )
  )
  (
    f_{\Lambda}
  )
}.
\]
This probability kernel is well-defined
for any $\omega$,
even if $\omega$ has infinite clusters or if the Ising model
magnetizes on $\Pi(\omega)$.
We shall take (\ref{eq_grising_spec_def_general_simplified})
as a definition for
each kernel $\gamma_\Lambda$.
The family $\gamma=(\gamma_\Lambda)_{\Lambda\in\mathcal S}$
so produced
is a specification.
The long-range interaction derives exclusively
from the appearance of the measure
$\kappa^+(\cdot,1_{\Pi(\omega)-\Lambda})$
in the
denominator in the fraction on the right in (\ref{eq_grising_spec_def_general_simplified}).
Recall that $M$ is tail measurable:
 the Ising model magnetizes
on $\Pi(\omega)$ if and only if the Ising model
magnetizes on $\Pi(\omega)-\Lambda$.
This leads to the following crucial observation.

\begin{proposition}
  Consider $\zeta\in\Omega$.
  If $\zeta\not\in M$,
  then the
   map $\omega\mapsto\gamma_\Lambda(\cdot,\omega)$
  is continuous --- both sides endowed with the $\mathcal L$-topology ---
  at $\zeta$ for any $\Lambda\in\mathcal S$.
In other words,  $\Omega_\gamma$ contains $\Omega-M$.
\end{proposition}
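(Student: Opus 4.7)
The plan is to fix an arbitrary $\zeta \in \Omega \setminus M$ and $\Lambda \in \mathcal S$, and to establish continuity of the map $\omega \mapsto \gamma_\Lambda(\cdot,\omega)$ at $\zeta$ in the $\mathcal L$-topology. Because $\gamma_\Lambda(\cdot,\omega) = \hat\gamma_\Lambda(\cdot,\omega) \times \delta_{\omega_{S\setminus\Lambda}}$, and because $\omega \mapsto \delta_{\omega_{S\setminus\Lambda}}$ is automatically $\mathcal L$-continuous by definition of the $\mathcal L$-topology, I would first reduce to proving continuity of $\omega \mapsto \hat\gamma_\Lambda(\eta,\omega)$ at $\zeta$ for each fixed $\eta \in E^\Lambda$ (renaming the dummy argument of $\hat\gamma_\Lambda$ to avoid a clash with the proposition's use of $\zeta$). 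Since $E = \{-1,0,1\}$ and $\Lambda$ is finite, pointwise continuity of this sort is sufficient for strong continuity on $\mathcal P(E^\Lambda,\mathcal E^\Lambda)$.

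Next I would inspect formula (\ref{eq_grising_spec_def_general_simplified}) and isolate the $\omega$-dependence. The factors $p^{\|\eta\|}$, $(1-p)^{|\Lambda|-\|\eta\|}$, $\alpha_{\Pi(\eta)}(\eta)$, and $\hat\kappa_\Lambda(\cdot,1_{\Pi(\eta)})$ do not involve $\omega$ at all. The factor $f_\Lambda(\eta\omega_{S\setminus\Lambda})$ depends on $\omega$ only at the finitely many vertices incident to $\partial\Lambda$, hence is automatically $\mathcal L$-continuous. The only nonlocal dependence on $\omega$ — appearing inside the fraction and also, via the finite sum over $\eta$ that produces $Z_\Lambda^\omega$, inside the normalization — sits entirely in the factor $\kappa^+(\cdot,1_{\Pi(\omega)-\Lambda})$. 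Since $f_\Lambda \geq e^{-\beta|\partial\Lambda|} > 0$ and is measurable with respect to a $\sigma$-algebra generated by finitely many coordinates, the denominator is uniformly bounded away from zero, so the whole ratio is continuous at $\zeta$ as soon as the map $\omega \mapsto \kappa^+(\cdot,1_{\Pi(\omega)-\Lambda})$ is.

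For this last continuity I would argue in two steps. First, $\omega \mapsto 1_{\Pi(\omega)-\Lambda}$ is clearly $\mathcal L$-continuous on $\Omega$: it is identically zero on $\Lambda$ and coincides coordinatewise with the locally continuous indicator $\omega_y \mapsto 1_{\omega_y\neq 0}$ on $S\setminus\Lambda$. Hence $\omega \to \zeta$ in $\mathcal L$ yields $1_{\Pi(\omega)-\Lambda} \to 1_{\Pi(\zeta)-\Lambda}$ in $\mathcal L$. Second, the preceding proposition grants $\mathcal L$-continuity of $\omega'\mapsto\kappa^+(\cdot,\omega')$ at the target point $1_{\Pi(\zeta)-\Lambda}$ provided this point is not in $M$. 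To verify this, I would invoke the tail measurability of $M$ recorded earlier: $M$ is $\mathcal F^0_{S\setminus\Lambda}$-measurable, so membership in $M$ depends only on the percolation structure outside $\Lambda$, and $\Pi(1_{\Pi(\zeta)-\Lambda})$ agrees with $\Pi(\zeta)$ on all of $S\setminus\Lambda$. Therefore the hypothesis $\zeta \notin M$ transfers to $1_{\Pi(\zeta)-\Lambda} \notin M$, and composing the two continuities concludes the argument.

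The main obstacle — and really the only point where one must be careful — is recognizing within the somewhat dense formula (\ref{eq_grising_spec_def_general_simplified}) that all long-range dependence on $\omega$ is channeled through the single factor $\kappa^+(\cdot,1_{\Pi(\omega)-\Lambda})$, so that the discontinuity set of $\gamma_\Lambda$ is inherited from that of $\kappa^+$. Once this bookkeeping is in place, the previous proposition together with the tail-measurability of $M$ delivers the conclusion with no further work.
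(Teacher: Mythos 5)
Your proposal is correct and follows the route the paper itself sketches: the paper explicitly remarks, just before the proposition, that the long-range $\omega$-dependence of $\hat\gamma_\Lambda$ in~(\ref{eq_grising_spec_def_general_simplified}) lives entirely in the factor $\kappa^+(\cdot,1_{\Pi(\omega)-\Lambda})$, and that tail-measurability of $M$ makes $\zeta\notin M$ equivalent to $1_{\Pi(\zeta)-\Lambda}\notin M$, after which the preceding proposition on continuity of $\kappa^+$ finishes the job. You have supplied the surrounding bookkeeping (finiteness of $E^\Lambda$, local dependence of $f_\Lambda$, the denominator bounded below by $e^{-\beta|\partial\Lambda|}$, continuity of $\omega\mapsto 1_{\Pi(\omega)-\Lambda}$) cleanly and correctly.
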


We claim that the specification $\gamma$ is weakly dependent.
The reasoning is similar to the discussion of the random-cluster model.
The dependence on $\omega$ in (\ref{eq_grising_spec_def_general_simplified})
is only through its appearance in the fraction on the right,
and its effect on the normalization constant
 $Z_\Lambda^\omega$.
 But the definition of $f_\Lambda$ implies that $|\log f_\Lambda|\leq |\partial\Lambda||\beta|$.
 The logarithm of the fraction in (\ref{eq_grising_spec_def_general_simplified})
 is therefore bounded by $2|\partial\Lambda||\beta|$.
 Much like for the random-cluster model,
 this implies that
 \[
 \left|
 \log \frac{Z_\Lambda^\omega}{Z_\Lambda^{\omega'}}
 \right|
 \leq 4|\partial\Lambda||\beta|
 \quad
 \text{and}
 \quad
 \left|
 \log \frac{\hat\gamma_\Lambda(\zeta,\omega)}{\hat\gamma_\Lambda(\zeta,\omega')}
 \right|
 \leq 8|\partial\Lambda||\beta|,
 \]
 and we conclude with the asymptotic bound
 \[
  \operatorname{Diam}^\infty \mathcal B_{\Delta_n}(\gamma)
  \leq 8 |\partial\Delta_n||\beta|=o(|\Delta_n|)
 \]
 as $n\to\infty$; the specification $\gamma$ is weakly dependent.
Note that the argument for weak dependence of $\gamma$
works for any choice of parameters $p\in(0,1)$
and $\beta\geq 0$, regardless of the phase that we work in.

\subsubsection{Below magnetization}
For the remainder of the theory, it is no longer
necessary to require $p<p_c$.
Instead, we fix the percolation parameter $p$ and inverse temperature $\beta$ subject only to
 $P_p(M)=0$.
Of course, the Griffiths singularity random field $K_{p,\beta}$ is defined by the equation $K_{p,\beta}:=P_p\kappa^+=P_p\kappa^-$.
This measure is a DLR state of the specification $\gamma$
as defined in (\ref{eq_grising_spec_def_general_simplified}).
Moreover, we observe that
$K_{p,\beta}(M)=P_p(M)=0$,
and therefore $K_{p,\beta}$ is supported on $\Omega_\gamma$.
In other words, $K_{p,\beta}$ is almost Gibbs with respect
to $\gamma$.
Our final goal is to prove the following theorem.

\begin{theorem}
  \label{thm_GRISING_GOAL}
  If the parameters $p$ and $\beta$
  are such that $P_p(M)=0$,
  then $h_0(\gamma)=\{K_{p,\beta}\}$.
\end{theorem}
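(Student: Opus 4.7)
The plan is to use the duality between $\gamma$ and $K_{p,\beta}$ afforded by Lemma~\ref{lemma_duality_same_SFE} to reduce uniqueness of the minimizer to two independent statements: the uniqueness of the Bernoulli product as a shift-invariant minimizer of itself, and the uniqueness of the Ising DLR state in the nonmagnetic phase. The paper has already checked that $K_{p,\beta}$ is a DLR state of $\gamma$, so by Corollary~\ref{corollary_first_half} we have $K_{p,\beta}\in h_0(\gamma)$. Fix an arbitrary minimizer $\mu\in h_0(\gamma)$; the goal is to show $\mu=K_{p,\beta}$.

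The first step is to identify the percolation marginal of $\mu$. Write $\tilde\mu$ for the push-forward of $\mu$ under the projection $\omega\mapsto|\omega|$, a shift-invariant probability measure on $\{0,1\}^{\mathbb Z^d}$; the push-forward of $K_{p,\beta}$ under the same map is $P_p$. Applying Lemma~\ref{lemma_duality_same_SFE} with $\nu=K_{p,\beta}$ gives $h(\mu|K_{p,\beta})=h(\mu|\gamma)=0$, and the data-processing inequality for relative entropy applied to the projection yields
\[
  |\Delta_n|^{-1}\mathcal H_{\Delta_n}(\tilde\mu|P_p)\leq|\Delta_n|^{-1}\mathcal H_{\Delta_n}(\mu|K_{p,\beta})
\]
for every $n\in\mathbb N$. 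Passing to the limit shows that the specific free energy of $\tilde\mu$ relative to $P_p$ vanishes. The Bernoulli product $P_p$ is the Gibbs measure for the trivial absolutely summable potential on the state space $\{0,1\}$, so Subsection~\ref{subsection_abs_summable} applies and identifies $P_p$ as the unique shift-invariant minimizer of itself. Consequently $\tilde\mu=P_p$, and in particular $\mu(M)=\tilde\mu(M)=P_p(M)=0$.

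Since $\Omega\setminus M\subset\Omega_\gamma$, the preceding observation implies $\mu(\Omega_\gamma)=1$, and Corollary~\ref{corollary_second_half} then shows that $\mu$ is a DLR state of $\gamma$. It remains to identify the conditional spin distribution given the percolation structure. Fix $\Lambda\in\mathcal S$ and inspect~(\ref{eq_grising_spec_def_general_simplified}): after conditioning $\hat\gamma_\Lambda(\cdot,\omega)$ on $\Pi(\zeta)=\eta$ inside $\Lambda$, the remaining weight on the spins $\zeta$ reduces, up to a factor independent of $\zeta$, to $\alpha_\eta(\zeta)f_\Lambda(\zeta\omega_{S\setminus\Lambda})$, which is precisely the Ising conditional on the percolation structure inside $\Lambda$ with boundary spins read off from $\omega$. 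Since this identification holds for every $\Lambda$, conditionally on the full percolation configuration $\pi:=\Pi(\omega)$ the spin field under $\mu$ is a DLR state of the Ising specification on $\pi$. Because $\mu(M)=0$, the Ising model on $\pi$ does not magnetize $\mu$-almost surely, so its DLR state is uniquely $\kappa^+(\cdot,1_\pi)=\kappa^-(\cdot,1_\pi)$. Together with $\tilde\mu=P_p$ this yields $\mu=P_p\kappa^+=K_{p,\beta}$.

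The conceptually decisive step is the duality $h(\mu|\gamma)=h(\mu|K_{p,\beta})$, which unlocks the projection argument that forces the percolation marginal to be $P_p$ and, therefore, $\mu(M)=0$ — without any direct analysis of the tail event $M$. After that, the nonmagnetization hypothesis $P_p(M)=0$ is used only through the already-known uniqueness of the infinite-volume Ising measure on structures outside $M$.
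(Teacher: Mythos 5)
Your argument is correct and follows the paper's proof essentially step for step: duality via Lemma~\ref{lemma_duality_same_SFE} to pass to $h(\mu|K_{p,\beta})=0$, data-processing to force the percolation marginal to equal $P_p$, deducing $\mu(M)=0$ and hence that $\mu$ is a DLR state by Corollary~\ref{corollary_second_half}, and finally conditioning on the percolation structure and invoking uniqueness of the nonmagnetizing Ising state to conclude $\mu=P_p\kappa^+=K_{p,\beta}$. The only cosmetic differences are that you phrase the percolation marginal as a push-forward under $\omega\mapsto|\omega|$ rather than a restriction to $\mathcal F^0$, and you unpack the identity $\gamma_\Lambda=\gamma_\Lambda\kappa_\Lambda$ from~(\ref{eq_grising_spec_def_general_simplified}) rather than citing it directly.
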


This statement is stronger than the variational principle,
it also implies that $K_{p,\beta}$ is the unique DLR
state of $\gamma$, and that $K_{p,\beta}$ is the unique minimizer of $\gamma$.

\begin{proof}[Proof of Theorem~\ref{thm_GRISING_GOAL}]
%
%
%
% The goal of this subsection is to prove that
%  $h_0(\gamma)=\{K_{p,\beta}\}$.
Fix $\mu\in h_0(\gamma)$.
Then $\mu\in h_0(K_{p,\beta})$.
Remark that $K_{p,\beta}|_{\mathcal F^0}=P_p|_{\mathcal F^0}$;
sampling the Ising model on the percolation clusters
alters the spins on those clusters, but not the percolation structure itself.
Observe that
\begin{align*}
  h(\mu|K_{p,\beta})
  &=\lim_{n\to\infty}|\Delta_n|^{-1}\mathcal H_{\mathcal E^{\Delta_n}}(\mu_{\Delta_n}|\sigma_{\Delta_n}(K_{p,\beta}))
  \\&\geq
  \lim_{n\to\infty}|\Delta_n|^{-1}\mathcal H_{\mathcal E_0^{\Delta_n}}(\mu_{\Delta_n}|\sigma_{\Delta_n}(K_{p,\beta}))
  =
  \lim_{n\to\infty}|\Delta_n|^{-1}\mathcal H_{\mathcal E_0^{\Delta_n}}(\mu_{\Delta_n}|\sigma_{\Delta_n}(P_p)).
\end{align*}
What we read on the last line in this display
is exactly the SFE of $\mu|_{\mathcal F^0}$
with respect to $P_p|_{\mathcal F^0}$.
But $P_p|_{\mathcal F^0}$ is a Gibbs measure
with respect to an independent specification,
which has a unique minimizer.
We chose $\mu$ such that $h(\mu|K_{p,\beta})=0$,
which now implies that
$\mu|_{\mathcal F^0}=P_p|_{\mathcal F^0}$.
We observe in particular that
$\mu(M)=0$,
and consequently $\mu(\Omega_\gamma)=1$.
Therefore $\mu$ is almost Gibbs with respect to $\gamma$.
Finally,
we observe that $\gamma_\Lambda=\gamma_\Lambda\kappa_\Lambda$.
This implies
that $\mu$ is also a DLR state of the specification $\kappa$.
But the Ising model is nonmagnetizing on $\Pi(\omega)$ for $\mu$-almost every $\omega$,
and therefore $\mu$ is also invariant under the probability kernel $\kappa^+$.
This kernel is $\mathcal F^0$-measurable;
conclude that $\mu=(\mu|_{\mathcal F^0})\kappa^+=P_p\kappa^+=K_{p,\beta}$.
\end{proof}

\section*{Acknowledgement}
The authors are grateful to
Nilanjana Datta,
Aernout van Enter,
Geoffrey Grimmett,
James Norris, and
Peter Winkler
for many useful discussions.
The authors would like to express
their special gratitude to Nathana\"el
Berestycki for enabling them to collaborate on this paper.

The first author was supported by the Department of
Pure Mathematics and Mathematical Statistics, University of Cambridge, the UK
Engineering and Physical Sciences Research Council grant EP/L016516/1,
and the Shapiro Visitor Program of the Department of Mathematics, Dartmouth College.
The second author was supported by
the UK
Engineering and Physical Sciences Research Council grant EP/L018896/1.

% \nocite{*}
\bibliography{main.bib,clean.bib}
\bibliographystyle{amsalpha}

\end{document}